% submit to arXiv 29 OCT 2025

% !TeX encoding = utf-8
% !TeX program = pdflatex
% !TeX spellcheck = en_GB

%\pdfoutput=1
\synctex=1
\documentclass[11pt,reqno,final]{amsart}

\usepackage[utf8]{inputenc}
\usepackage[marginpar=2cm]{geometry}
\geometry{margin=25mm}
\usepackage{setspace}
\setlength{\parindent}{0pt}
\setlength{\parskip}{\medskipamount}
\usepackage{indentfirst}
\usepackage{tgtermes}
\RequirePackage{lineno}
\usepackage{mathrsfs,mathtools,amssymb,stmaryrd,tikz-cd}
\usepackage{graphicx}

\theoremstyle{plain}
\newtheorem{introthm}{Theorem}

\newtheorem{theorem}{Theorem}[section]
\newtheorem{lemma}[theorem]{Lemma}
\newtheorem{proposition}[theorem]{Proposition}
\newtheorem{corollary}[theorem]{Corollary}
\newtheorem{claim}{Claim}

\theoremstyle{definition}
\newtheorem{definition}[theorem]{Definition}

\theoremstyle{remark}
\newtheorem{remark}[theorem]{Remark}

\usepackage[backend=bibtex, bibencoding=utf8, style=numeric,
url=false,doi=false,eprint=true,isbn=false,
hyperref=auto,backref=false]{biblatex}
\addbibresource{references.bib}

\usepackage[notcite,notref,color]{showkeys}

\usepackage[pagebackref=false,hidelinks,unicode=true,bookmarks=true,
linktoc=page,pdfstartview={FitH},final=true]{hyperref}

\allowdisplaybreaks[3]
\binoppenalty=\maxdimen
\relpenalty=\maxdimen
\setlength{\emergencystretch}{3em}

%%%%%%%%%%%%%%%%%%%%%%%%%%%%%%%%%%%%%%%%%%

\DeclareMathOperator{\id}{id}
\DeclareMathOperator{\pr}{pr}

\DeclareMathOperator{\Hom}{Hom}

\DeclareMathOperator{\End}{End}
\DeclareMathOperator{\Der}{Der}

\DeclareMathOperator{\hkr}{hkr}
\DeclareMathOperator{\poly}{poly}
\DeclareMathOperator{\CE}{CE}

\DeclareMathOperator{\At}{At}
\DeclareMathOperator{\td}{td}
\DeclareMathOperator{\Td}{Td}

\DeclareMathOperator{\str}{str}
\DeclareMathOperator{\Ber}{Ber}

\DeclareMathOperator{\coker}{coker}
\DeclareMathOperator{\image}{im}

\DeclareMathOperator{\DGMfdCat}{DG^{+}-Mfd}
\DeclareMathOperator{\cLinfCat}{Bun^{+}-cL_{\infty}}

%%%%%%%%%%%%%%%%%%%%%%%%%%%%%%%%%%%%%%%%%%

\newcommand{\cA}{\mathcal{A}}
\newcommand{\cB}{\mathcal{B}}
\newcommand{\cC}{\mathcal{C}}
\newcommand{\cD}{\mathcal{D}}
\newcommand{\cE}{\mathcal{E}}
\newcommand{\cF}{\mathcal{F}}

\newcommand{\cL}{\mathcal{L}}
\newcommand{\cM}{\mathcal{M}}
\newcommand{\cN}{\mathcal{N}}
\newcommand{\cO}{\mathcal{O}}
\newcommand{\cQ}{\mathcal{Q}}
\newcommand{\cR}{\mathcal{R}}

\newcommand{\frakg}{\mathfrak{g}}

\newcommand{\RR}{\mathbb{R}}

\newcommand{\argument}{\mathord{\color{black!25}-}}

\newcommand{\degree}[1]{\abs{#1}}

\newcommand{\xto}[1]{\xrightarrow{#1}}
\newcommand{\abs}[1]{\left|#1\right|}
\newcommand{\liederivative}[1]{\cL_{#1}}

\newcommand{\sections}[1]{\Gamma(#1)}

\newcommand{\XX}{\mathfrak{X}}

\newcommand{\transpose}{^{\top}}
\newcommand{\tensor}{\otimes}

\newcommand{\half}{\frac{1}{2}}

\newcommand{\smooth}[1]{C^{\infty}({#1})}
\newcommand{\tangent}[1]{T{#1}}
\newcommand{\cotangent}[1]{T^{\vee}{#1}}
\newcommand{\tangentp}[2]{T_{#1}{#2}}

\newcommand{\grHH}{\mathrm{HH}_{\scriptscriptstyle{\oplus}}}

\newcommand{\hochschild}{d_{\mathcal{H}}}

\newcommand{\pDpoly}{{\mathcal{D}}_{\poly}}

\newcommand{\Tpoly}{{_{\scriptscriptstyle{\oplus}}}{\mathcal{T}}_{\poly}}
\newcommand{\Dpoly}{{_{\scriptscriptstyle{\oplus}}}{\mathcal{D}}_{\poly}}

\newcommand{\fM}{\mathsf{R}}			
\newcommand{\fcM}{\mathcal{R}}

%%%%%%%%%%%%%%%%%%%%%%%%%%%%%%%%%%%%%%%%%%%%

\title{Atiyah class of DG manifolds of positive amplitude}

\author{Seokbong Seol}
\address{School of Mathematics, Korea Institute for Advanced Study}
\email{azuredream89@kias.re.kr}

\thanks{The author is supported by the KIAS Individual Grant MG090801 and MG090802 at Korea Institute for Advanced Study.}

\begin{document}

\begin{abstract}
Behrend, Liao, and Xu showed that differential graded (DG) manifolds of positive amplitude forms a category of fibrant objects.
In particular, this ensures that notion of derived intersection---more generally, homotopy fibre product---is well-defined up to weak equivalences. We prove that the Atiyah and Todd classes of DG manifolds of positive amplitude are invariant under the weak equivalences. 

As an application, we study Hochschild cohomology of DG manifolds of positive amplitude defined using poly-differential operators, which is compatible with Kontsevich formality theorem and Duflo--Kontsevich-type theorem established by Liao, Stiénon and Xu.
We prove that this Hochschild cohomology is invariant under weak equivalences.
\end{abstract}

\maketitle

\tableofcontents

%\linenumbers

\section*{Introduction}
This paper, which is a sequel to~\cite{seol2024atiyah}, studies the Atiyah class and Todd class of differential graded (DG) manifolds of positive amplitude. 
A DG manifold of positive amplitude is a DG manifold $(\cM,Q)$ realised as a bundle of \emph{positively} graded curved $L_{\infty}[1]$ algebras $(M,L,\lambda)$: here, $L$ is a graded vector bundle over a smooth manifold $M$ whose fibres are concentrated in positive degrees, and $\lambda:SL\to L$ is a vector bundle map from the symmetric tensor bundle $SL$ to $L$---see~\cite{MR4735657}.

It was shown in~\cite{MR4735657} that 
the category of DG manifolds of positive amplitude forms a category of fibrant objects (in the sense of~\cite{MR341469}) and contains smooth manifolds as a full subcategory.
In particular, the category of DG manifolds of positive amplitude is equipped with two classes of morphisms, namely weak equivalences and fibrations. 

In the homotopy category of DG manifolds of positive amplitude---obtained by formally inverting weak equivalences---a notion of derived intersection (and more generally, homotopy fibre product) is well-defined.
This notion serves as a substitute for ordinary intersections in the category of smooth manifolds, which may be ill-behaved or not even well-defined as smooth manifolds.
For instance, let $X$ and $Y$ be embedded submanifolds of $M$. 
Although the intersection $X\cap Y$ is not necessarily a smooth manifold in general, there exists a DG manifold of positive amplitude $X\cap^{h}Y$, referred to as the derived intersection of $X$ and $Y$, such that $X\cap Y$ is weakly equivalent to $X\cap^{h}Y$ whenever $X$ and $Y$ intersect transversally. 
The homotopy category of DG manifolds of positive amplitude 
offers a differential-geometric framework in which derived intersections and related constructions are formulated. 
This perspective aligns with certain ideas in derived differential geometry, such as those found in~\cite{MR3121621, MR2641940}; see also~\cite{arXiv:2303.11140}.

In the study of DG manifolds, the Atiyah class plays an important role. 
It was originally introduced by Atiyah~\cite{MR86359} as the obstruction to the existence of holomorphic connections on holomorphic vector bundles. 
Kapranov~\cite{MR1671737} later showed that the Atiyah class plays a central role in the construction of an $L_{\infty}[1]$ algebra arising from a Kähler manifold, providing a reformulation of the Rozansky--Witten invariant \cite{MR1481135, MR1671725, MR2661534}. 
Subsequently, the Atiyah class was extended to the setting of DG manifolds, extending classical constructions---see~\cite{MR3319134}; see also \cite{MR2608525}. Geometrically, the Atiyah class is the obstruction to the existence of an affine connection compatible with the homological vector field. It also induces an $L_{\infty}[1]$ algebra structure on the space of vector fields~\cite{MR4393962}, generalising the $L_{\infty}[1]$ algebras originally constructed by Kapranov. 

The Atiyah class of DG manifolds recovers several familiar cases. 
For example, given a complex manifold $X$, the Atiyah class of $(T_{X}[1], \overline{\partial})$ recovers the classical Atiyah class, and similarly, for a regular foliation $F$, the Atiyah class of $(T_{F}[1],d_{F})$ recovers the Atiyah--Molino class~\cite{MR0281224}. See~\cite{MR3877426,MR4665716}. 
Also, for a vector bundle $E\to M$ with a section $s\in \sections{E}$, the Atiyah class of $(E[-1],\iota_{s})$ vanishes if and only if $s$ intersects the zero section cleanly~\cite{seol2024atiyah}.

The Todd class of a DG manifold is defined in terms of the Atiyah class. It plays a central role in the Kontsevich formality theorem and, moreover, in the Duflo--Kontsevich-type theorem for DG manifolds~\cite{MR3754617}, 
which recovers both classical Duflo theorem~\cite{MR0444841} 
and a result of Kontsevich concerning Hochschild cohomology of complex manifolds~\cite{MR2062626}. 
In particular, when $X$ is a compact Kähler manifold, the Todd class of $(T_{X}[1], \overline{\partial})$ recovers the classical Todd class, a central notion in the formulation of Hirzebruch--Riemann--Roch theorem. Also, the Todd class of $(\frakg[1],d_{\CE})$ recovers the Duflo element of a Lie algebra $\frakg$. See~\cite{arXiv:math/9812009, MR3754617}.

Given the importance of these characteristic classes, it is natural to ask the following question: do the Atiyah class and Todd class descend to the \emph{homotopy category} of DG manifolds of positive amplitude---that is, are they invariant under weak equivalences?
If so, they would become well-suited invariants for derived intersections and related constructions. 

Our main result confirms this.
\begin{introthm}[Theorem~\ref{thm:main4}, Theorem~\ref{thm:main5}] \label{thm:IntroA}
The Atiyah class and Todd class of DG manifolds of positive amplitude are invariant under weak equivalences.
\end{introthm}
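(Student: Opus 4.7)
The plan is to exploit the category-of-fibrant-objects structure established in \cite{MR4735657} to reduce the problem to a case that admits direct cochain-level analysis. Recall that Brown's factorization lemma says every weak equivalence \( f:(\cM_1,Q_1)\to(\cM_2,Q_2) \) in such a category factors as \( f = p \circ s \), where \( p \) is an acyclic fibration and \( s \) is a right inverse of another acyclic fibration \( q \). It therefore suffices to prove that the Atiyah class is preserved under acyclic fibrations---once this is known, it is automatically preserved under their sections (using \( (q \circ s)^{*} = s^{*} q^{*} = \id \) at the level of cohomology), and the composition case follows. Thus the whole theorem reduces to a single statement: for any acyclic fibration \( \pi:(\cM,Q) \to (\cN,Q') \), the pullback map on cohomology sends \( \atiyahclass{(\cN,Q')} \) to \( \atiyahclass{(\cM,Q)} \).

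For an acyclic fibration \( \pi \), I would work with the description via bundles of positively graded curved \( L_\infty[1] \)-algebras \( (M,L,\lambda) \). Such a \( \pi \) corresponds to a bundle map whose kernel is, fibrewise, a contractible curved \( L_\infty[1] \)-algebra. Choosing a splitting, one can build an affine connection \( \nabla \) on \( \cM \) together with a compatible connection \( \overline\nabla \) on \( \cN \) so that \( \atiyahcocycle{\nabla}{(\cM,Q)} \) and the pullback \( \pi^{*}\atiyahcocycle{\overline\nabla}{(\cN,Q')} \) differ by the coboundary of a cochain manufactured from the contraction data on the kernel. Any two connections give cohomologous Atiyah cocycles---a fact proved in the companion paper \cite{seol2024atiyah}---so this produces the required equality of Atiyah classes under the cohomology isomorphism induced by \( \pi \).

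The Todd class invariance follows essentially formally from that of the Atiyah class. Since Td is constructed as a universal power series in traces of powers of the Atiyah cocycle, pullback compatibility for the Atiyah class transfers to pullback compatibility for Td, provided one tracks representatives carefully at the cochain level. I expect this part to amount to little more than writing out the definitions.

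The hard part will be the explicit cochain-level comparison in the acyclic fibration case: one must produce connections on \( \cM \) and \( \cN \) that are compatible with both the homological vector fields \emph{and} with \( \pi \), and then show that the discrepancy between the two Atiyah cocycles is exact. Here the positive amplitude assumption will be essential, since it ensures that iterative constructions using homotopy contractions of the kernel of \( \pi \) terminate after finitely many steps and that the relevant cochain-level identities hold; this positivity is what makes the overall argument go through, and would fail in the general DG setting. A secondary technical point will be identifying the correct cohomology in which to compare the two classes, since \( \cM \) and \( \cN \) have different underlying base manifolds in general and the pullback \( \pi^{*} \) must be shown to be a quasi-isomorphism on the complex where the Atiyah cocycle lives---this should follow from the weak equivalence hypothesis together with the positive amplitude structure.
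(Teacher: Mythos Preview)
Your reduction to acyclic fibrations via Brown's factorization lemma matches the paper exactly, and your instinct to compare Atiyah cocycles by building compatible connections on source and target is also what the paper does. But you have misidentified where the real difficulty lies, and this shows up in two concrete gaps.

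First, there is no single ``pullback map \(\pi^{*}\)'' on the complex of \((1,2)\)-tensors: this complex has mixed variance (two cotangent factors, one tangent factor), so neither pushforward nor pullback is defined on it directly. What the paper does instead is introduce an \emph{intermediate} complex \(\sections{\cotangent{\cM}^{\tensor 2}\tensor \Psi^{\ast}\tangent{\cN}}\) and construct a zigzag of quasi-isomorphisms
\[
(\sections{\cotangent{\cM}^{\tensor 2}\tensor \tangent{\cM}},\liederivative{Q})
\xrightarrow{\alpha}
(\sections{\cotangent{\cM}^{\tensor 2}\tensor \Psi^{\ast}\tangent{\cN}},\liederivative{Q,R})
\xleftarrow{\beta}
(\sections{\cotangent{\cN}^{\tensor 2}\tensor \tangent{\cN}},\liederivative{R}).
\]
Establishing that both \(\alpha\) and \(\beta\) are quasi-isomorphisms is the bulk of the paper (your ``secondary technical point'' is in fact the main theorem). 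The contraction data on the kernel of the fibration, which you invoke for the Atiyah cocycle comparison, is actually used here---to prove local acyclicity of \(\ker\Psi_{\ast}\) near classical points, which is what makes \(\alpha\) a quasi-isomorphism.

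Second, once this zigzag is in place, the Atiyah cocycle comparison is much easier than you suggest: with \(\nabla^{\cN}\) arbitrary and \(\nabla^{\cM}\) built from the pullback connection plus any connection on \(\ker\Psi_{\ast}\) via the splitting \(\tangent{\cM}\cong \ker\Psi_{\ast}\oplus \Psi^{\ast}\tangent{\cN}\), one gets \(\alpha(\At^{\nabla^{\cM}})=\beta(\At^{\nabla^{\cN}})\) as an \emph{equality of cocycles} in the intermediate complex, not merely equality up to an explicit coboundary. No homotopy needs to be constructed at this stage. Your plan to produce a coboundary from contraction data is therefore aimed at the wrong target; that machinery belongs to the quasi-isomorphism step, not to the class comparison.
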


As a byproduct of Theorem~\ref{thm:IntroA}, we prove an analogous theorem for Hochschild cohomology of DG manifolds of positive amplitude. 
The Hochschild cohomology of DG manifolds in this paper is defined using poly-differential operators as in~\cite{MR3754617}, where the Duflo--Kontsevich-type theorem for DG manifolds applies.
To be more precise, note that the space of poly-differential operators on a DG manifold $(\cM,Q)$ carries two compatible differentials. One is the Hochschild differential, which maps $p$-differential operators to $(p+1)$-differential operators. The other is induced by the homological vector field $Q$, which increases the internal degree arising from the grading of $\cM$. 
Together, these differentials form a double complex, and the Hochschild cohomology $\grHH(\cM,Q)$ is defined as the cohomology of the associated total complex, using the \emph{direct sum} totalisation. This construction is related to Hochschild cohomology of the second kind~\cite{MR2931331, MR4584414}. 
The alternative construction using \emph{direct product} totalisation does not fit into the framework of Duflo--Kontsevich-type theorem, and will not be considered in this paper.

The Hochschild cohomology carries a natural Gerstenhaber algebra structure, consisting of the cup product and Gerstenhaber bracket. In the positive amplitude setting, we obtain the following:
\begin{introthm}[Theorem~\ref{thm:maingrHH1}]
Let $(\cM,Q)$ be a DG manifold of positive amplitude. Then the Hochschild cohomology $\grHH^{\bullet}(\cM,Q)$ is invariant under weak equivalences, and this invariance respects Gerstenhaber algebra structures.
\end{introthm}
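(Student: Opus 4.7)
The strategy is to reduce the statement to Theorem~\ref{thm:IntroA} by passing through the polyvector field side via the Duflo--Kontsevich-type theorem of Liao--Stiénon--Xu. Concretely, that theorem provides, for any DG manifold of positive amplitude $(\cM,Q)$, a Gerstenhaber algebra isomorphism
\[
\grHH^{\bullet}(\cM,Q) \;\xrightarrow{\;\sim\;}\; H^{\bullet}\bigl(\Tpoly(\cM,Q)\bigr),
\]
whose underlying cochain map is the Hochschild--Kostant--Rosenberg map twisted by the square root of the Todd class $\Td^{1/2}(\atiyahclassQ)$. In particular, the whole structure of the left-hand side is encoded by two pieces of data on the right: the Gerstenhaber algebra $(\Tpoly,\schouten{\argument}{\argument},\wedge)$ with its differential $\liederivative{Q}$, and the Todd correction built from $\atiyahclassQ$.

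\emph{Step~1: invariance of the polyvector field side.} I will first show that $H^{\bullet}(\Tpoly(\cM,Q),\liederivative{Q})$, together with its Gerstenhaber algebra structure, is invariant under weak equivalences. A weak equivalence $\phi\colon(\cM,Q)\to(\cN,Q')$ in the positive-amplitude category is realised, in the $L_{\infty}[1]$-bundle description of~\cite{MR4735657}, by a morphism of curved $L_{\infty}[1]$-bundles $(M,L,\lambda)\to(N,L',\lambda')$. Using the fibrant category-of-fibrant-objects structure, I factor $\phi$ through a path object so as to reduce to two cases: a trivial fibration and a section thereof. In each case the induced map on the Chevalley--Eilenberg model of $\Tpoly$ is a quasi-isomorphism of Gerstenhaber algebras, by a standard spectral-sequence argument made possible by the positive-amplitude assumption, which guarantees degree-wise finiteness of the filtration used for the direct-sum totalisation.

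\emph{Step~2: transporting invariance through the Duflo--Kontsevich isomorphism.} By Theorem~\ref{thm:IntroA}, the Todd class is invariant under weak equivalences. Combining this with Step~1, the Duflo--Kontsevich isomorphism displayed above fits into a commutative square
\[
\begin{tikzcd}
\grHH^{\bullet}(\cM,Q)\ar[r,"\sim"]\ar[d,dashed] & H^{\bullet}(\Tpoly(\cM,Q))\ar[d,"\sim"] \\
\grHH^{\bullet}(\cN,Q')\ar[r,"\sim"] & H^{\bullet}(\Tpoly(\cN,Q'))
\end{tikzcd}
\]
whose right vertical arrow is a Gerstenhaber isomorphism (Step~1) and whose horizontal arrows respect the Todd twist because $\Td^{1/2}(\atiyahclassQ)$ is transported correctly by Theorem~\ref{thm:IntroA}. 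The dashed arrow, defined by commutativity, is then an isomorphism of Gerstenhaber algebras, which yields the claim.

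\emph{Main obstacle.} The genuinely nontrivial step is Step~1, because poly-differential operators and polyvector fields are not functorial under arbitrary smooth maps, only along étale maps and retracts thereof. The plan is to circumvent this by using the path-object factorisation available in the category of fibrant objects of~\cite{MR4735657} to replace $\phi$ by composites of trivial fibrations and their sections, and then to build chain-level quasi-isomorphisms explicitly on $\Tpoly$ by mimicking the Fedosov-style construction underlying the formal geometry of $(\cM,Q)$. The compatibility with the Gerstenhaber bracket and cup product will then follow because the explicit chain maps respect the Schouten and wedge operations at the cochain level.
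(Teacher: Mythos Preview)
Your overall architecture is exactly that of the paper: reduce to the polyvector side, show invariance there as a Gerstenhaber algebra, and then transport back through the Duflo--Kontsevich isomorphism using the Todd invariance from Theorem~\ref{thm:IntroA}. The commutative square you draw is essentially the combination of diagrams~\eqref{diag:HKR} and~\eqref{diag:Td} in the paper.

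The one place where you diverge from the paper is in the execution of Step~1. You propose to build the chain-level quasi-isomorphisms on $\Tpoly$ by a ``Fedosov-style construction underlying the formal geometry of $(\cM,Q)$''. The paper does something much more direct: it simply applies Theorem~\ref{thm:IntroB} (the $(p,q)$-tensor invariance) with $(p,q)=(p,0)$ and antisymmetrised, obtaining for each $p$ an explicit pair of quasi-isomorphisms
\[
(\sections{\Lambda^{p}\tangent{\cM}},\liederivative{Q}) \xrightarrow{\ \alpha\ } (\sections{\Lambda^{p}\Psi^{\ast}\tangent{\cN}},\liederivative{Q,R}) \xleftarrow{\ \beta\ } (\sections{\Lambda^{p}\tangent{\cN}},\liederivative{R})
\]
where $\alpha$ is $p$-fold pushforward by $\Psi_{\ast}$ and $\beta$ is the pullback-of-sections map (Corollary~\ref{cor:main3}, Remark~\ref{rem:Sym}). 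These maps visibly respect the wedge product and Schouten bracket at the cochain level, so the Gerstenhaber compatibility is a one-line check; the spectral-sequence step (your ``degree-wise finiteness'' remark) is then exactly Lemma~\ref{lem:TQI}. Your Fedosov route may well work, but it is vaguer and duplicates effort: the hard analysis has already been packaged into Theorem~\ref{thm:IntroB}, and you should just invoke it. Note also that in the paper the map $\Psi_{D}$ is \emph{defined} via the plain HKR square~\eqref{diag:HKR}, and the Todd square~\eqref{diag:Td} is checked separately; this is equivalent to your formulation but makes clearer that the Todd correction acts purely on the $\Tpoly$ side.
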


The proof of Theorem~\ref{thm:IntroA} proceeds in two steps:
\begin{enumerate}
\item We show that weakly equivalent DG manifolds of positive amplitude have isomorphic cohomology groups where the Atiyah classes and Todd classes live.
\item We verify that both the Atiyah classes and Todd classes correspond to their counterpart under this isomorphism.
\end{enumerate}

The first step is a consequence of a more general result:
\begin{introthm}[Theorem~\ref{thm:main1}, Theorem~\ref{thm:main2}, Theorem~\ref{thm:main3}] \label{thm:IntroB}
Let $\Psi:(\cM,Q)\to (\cN,R)$ be a weak equivalence between DG manifolds of positive amplitude. Then, for each pair $(p,q)$, 
the cochain complexes of $(p,q)$-tensor fields 
\[(\sections{\cM;\cotangent{\cM}^{\tensor q}\tensor \tangent{\cM}^{\tensor p}},\liederivative{Q}) \quad \text{and} \quad (\sections{\cN;\cotangent{\cN}^{\tensor q}\tensor \tangent{\cN}^{\tensor p}},\liederivative{R})\]
are quasi-isomorphic. 
\end{introthm}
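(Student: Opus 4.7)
The plan is to reduce the theorem, via Ken Brown's Lemma in the category of fibrant objects $\DGMfdCat$ (established in~\cite{MR4735657}), to the case where $\Psi$ is an acyclic fibration, and then handle that case directly using the short exact sequence of tangent complexes associated to a fibration. Since in a category of fibrant objects every weak equivalence fits into a roof of acyclic fibrations, it suffices to prove the theorem under the additional assumption that $\Psi$ is itself an acyclic fibration.

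For such a $\Psi\colon(\cM,Q)\to(\cN,R)$, the starting point is the short exact sequence of graded $\smooth{\cM}$-modules
\[
0 \to T_{\Psi} \to \tangent{\cM} \to \Psi^{*}\tangent{\cN} \to 0,
\]
where $T_{\Psi}$ is the relative tangent complex carrying the $Q$-induced differential. The BLX characterisation of fibrations in the positive-amplitude setting ensures that the map $\tangent{\cM}\to\Psi^{*}\tangent{\cN}$ is fibrewise surjective, so the sequence is exact; the additional \emph{acyclicity} of $\Psi$ translates into $(T_{\Psi},\liederivative{Q})$ being an acyclic complex. Dualising produces the analogous short exact sequence for the cotangent complex, whose relative part $T_{\Psi}^{\vee}$ is also acyclic.

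Equipping $\cotangent{\cM}^{\tensor q}\tensor\tangent{\cM}^{\tensor p}$ with the multi-step filtration induced by the above short exact sequences on each of the $p+q$ tensor factors, its associated graded decomposes into a distinguished piece isomorphic to $\Psi^{*}(\cotangent{\cN}^{\tensor q}\tensor\tangent{\cN}^{\tensor p})$ together with pieces each involving at least one tensor copy of $T_{\Psi}$ or $T_{\Psi}^{\vee}$, which are therefore acyclic. A convergent spectral sequence---convergence being ensured by the positive-amplitude bound on the weight filtration on $\hat{S}L^{\vee}$, which makes the total degree bounded in each cohomological degree---reduces the claim to showing that the pullback
\[
\sections{\cN;\cotangent{\cN}^{\tensor q}\tensor\tangent{\cN}^{\tensor p}}\longrightarrow\sections{\cM;\Psi^{*}(\cotangent{\cN}^{\tensor q}\tensor\tangent{\cN}^{\tensor p})}
\]
is a quasi-isomorphism. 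Since the source is a finitely generated projective $\smooth{\cN}$-module and $\Psi^{*}\colon\smooth{\cN}\to\smooth{\cM}$ is a quasi-isomorphism by hypothesis, this last step amounts to flat base change.

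The main technical obstacle I anticipate is extracting acyclicity of $T_{\Psi}$ from the BLX definition of an acyclic fibration in the positive-amplitude setting. Concretely, if $\Psi$ is realised by an $L_{\infty}[1]$-morphism $\phi\colon(M,L,\lambda)\to(M',L',\lambda')$ covering a surjective submersion $\psi\colon M\to M'$, then one has to identify the graded vector bundle complementary to the image of $\phi$ inside $L\oplus\psi^{*}\tangent{M'}$, equipped with the induced twisted differential, and show that acyclicity of $\Psi$ forces this complex to be acyclic. The positive-amplitude hypothesis is essential: it supplies the Kapranov connections that underlie the tensor-power analysis and ensures convergence of the spectral sequence.
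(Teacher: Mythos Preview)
Your high-level strategy matches the paper's: reduce to acyclic (linear) fibrations via the category-of-fibrant-objects structure, use the short exact sequence $0\to T_{\Psi}\to\tangent{\cM}\to\Psi^{*}\tangent{\cN}\to 0$, and show separately that (a) $T_{\Psi}$ is acyclic and (b) the pullback map $\sections{\cN;\cE}\to\sections{\cM;\Psi^{*}\cE}$ is a quasi-isomorphism for the relevant $\cE$. But both (a) and (b) are genuine gaps in your proposal, not routine steps.

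For (a), you correctly flag this as the main obstacle and then stop. The difficulty is real: the BLX definition of weak equivalence is \emph{pointwise at classical points only}, and the tangent complex $(\tangent{M}\oplus L,\lambda_{1})$ is not even a cochain complex globally (since $\lambda_{1}^{2}=-\lambda_{2}(\lambda_{0},\argument)\neq 0$ off the classical locus). The paper handles this by a sheaf-locality reduction, then a dichotomy: away from the classical locus $(\smooth{\cM},Q)$ itself is acyclic, so any DG vector bundle is acyclic; near a classical point $p$, one must manufacture an honest exact sequence of vector bundles on a neighbourhood $U$ by correcting $\lambda_{1}$ with a splitting $\eta$ (a lower-semicontinuity argument on ranks), and then inductively peel off acyclic mapping-cone summands. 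None of this is visible from the BLX axioms alone, and your proposal gives no indication of how to bridge the pointwise hypothesis to a local-module statement.

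For (b), ``flat base change'' is not enough as stated. The module $\sections{\cE}$ is projective as a \emph{graded} $\smooth{\cN}$-module, but the differential on $\smooth{\cM}\tensor_{\smooth{\cN}}\sections{\cE}$ is not $Q\tensor\id$; it involves the internal differential of $\cE$, so the usual flatness argument does not apply directly. The paper isolates this as a separate lemma: any DG vector bundle over a positive-amplitude DG manifold admits a \emph{finite} filtration whose associated graded has differential $Q\tensor\id$, and this finite filtration (not a spectral-sequence convergence argument) is what makes tensoring with $\sections{\cE}$ preserve quasi-isomorphisms. The same lemma is what you need for your ``acyclic associated graded pieces'' in the tensor-power step: knowing $T_{\Psi}$ is acyclic does not automatically give that $T_{\Psi}\tensor_{\smooth{\cM}}\sections{\cF}$ is acyclic for another DG vector bundle $\cF$. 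Your weight-filtration convergence remark is too vague to substitute for this structural input.
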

In particular, for $(p,q)=(1,2)$, this implies that the cochain complexes of $(1,2)$-tensor fields of $(\cM,Q)$ and $(\cN,R)$ are connected by quasi-isomorphisms. The second step of the proof identifies Atiyah cocycles under these quasi-isomorphisms. The case of the Todd classes follows from the case of the Atiyah classes, thereby proving Theorem~\ref{thm:IntroA}.

The proof of Theorem~\ref{thm:IntroB} relies on the case $(p,q)=(1,0)$, which corresponds to vector fields. 
For a weak equivalence $\Psi:(\cM,Q)\to (\cN,R)$, we compare their cochain complexes of vector fields via the diagram:
\[
\begin{tikzcd}
(\XX(\cM),\liederivative{Q}) \arrow{r}{\Psi_{\ast}} &(\sections{\Psi^{\ast}\tangent{\cN}},\liederivative{Q,R})& \arrow[swap]{l}{I} (\XX(\cN),\liederivative{R})
\end{tikzcd}
\]
where $\Psi^{\ast}\tangent{\cN}$ is the pullback bundle of $\tangent{\cN}$ along $\Psi$, and $\liederivative{Q,R}$ is a differential induced by $Q$ and $R$. The map $I$ is shown to be a quasi-isomorphism by the structure of DG vector bundles over a DG manifold of positive amplitude.

Using general properties of categories of fibrant objects, we may assume without loss of generality that $\Psi$ is an acyclic linear fibration, i.e., a weak equivalence that is also a linear fibration. In this case, $\Psi_{\ast}$ is surjective, and the problem reduces to the acyclicity of the kernel $\ker \Psi_{\ast}$. By~\cite{seol2024atiyah} (see also Lemma~\ref{lem:Locality}), it further reduces to a local statement: for each $p\in M$, there exists an open neighbourhood $U$ of $p$ such that $\ker \Psi_{\ast}$ restricted to $U$ is acyclic.

We distinguish between two types of points in $M$: classical points and non-classical points---see Definition~\ref{defn:WE}.
The case of non-classical point $p\in M$ follows from the acyclicity of the DG algebra of smooth functions. 
For a classical point $p\in M$, the fibre of $\ker \Psi_{\ast}$ at $p$ is described by a finite short exact sequence induced by the linear component of curved $L_{\infty}[1]$ algebra structure. Although the extension of this sequence to a neighbourhood is not canonical, it can be constructed by a lower semi-continuity argument on the rank of vector bundle maps---see Lemma~\ref{lem:U-Exact}. This construction yields the local acyclicity of $\ker \Psi_{\ast}$.

These results show that the Atiyah class and Todd class, as well as Hochschild cohomology descend to the \emph{homotopy category} of DG manifolds of positive amplitude. In particular, they become well-defined invariants for derived intersections, extending these characteristic classes to the contexts where classical intersection breaks down. 

We remark that an independent investigation of the same problem is being conducted by Chen, Liao, Xiang, and Xu~\cite{CLXX} using a different method.

\section*{Notation}
Throughout this paper, we will always work over the field $\RR$ of real numbers. By default, any manifolds are smooth manifolds 
that is Hausdorff and second countable. We reserve symbols $M$ and $N$ for smooth manifolds and symbols $\cM$ and $\cN$ for graded manifolds with base $M$ and $N$, respectively. Given a graded vector bundle $E$, we denote by $E^{k}$ the degree $k$ component of $E$.

\section{Preliminaries}
In this section, we review some general facts about differential graded (DG) manifolds of positive amplitude. For a detailed exposition, we refer to \cite{MR4735657}.

Let $M$ be a smooth manifold and let $\cO_{M}$ be the sheaf of smooth functions on $M$. A \textbf{graded manifold} $\cM$ is a pair $(M,\cA)$, where $M$ is a smooth manifold called the base of $\cM$, and $\cA$ is a sheaf of graded $\cO_{M}$-algebras such that, for each sufficiently small open subset $U\subset M$, the restriction $\cA|_{U}$ is isomorphic to the sheafification of $\cO_{M}|_{U}\otimes_{\RR} S(V^{\vee})$. 
Here, $V$ is a fixed graded vector space independent of the choice of $U$, and $S(V^{\vee})$ denotes the symmetric algebra on $V^{\vee}$, viewed as the algebra of polynomial functions on $V$. 
We denote the graded algebra of global sections of $\cA$ by $\smooth{\cM}$.

A \textbf{DG manifold} is a graded manifold $\cM$ equipped with a homological vector field $Q\in \XX(\cM)$; that is, $Q$ has degree $+1$ and satisfies $[Q,Q]=2Q^{2}=0$.

We say that a graded manifold $\cM$ is finite dimensional if both $\dim M < \infty$ and $\dim V < \infty$ (viewing $V$ as an ordinary vector space). It is said to be of positive amplitude if the grading on $V$ is concentrated in strictly positive degrees---in this case, the grading of $S(V^{\vee})$ is concentrated in \emph{non-positive} degrees.
 
Throughout this paper, all graded manifolds are assumed to be finite dimensional. 

\begin{remark}
In \cite{MR4735657}, a DG manifold of finite positive amplitude refers to a DG manifold whose graded manifold is both finite dimensional and of positive amplitude. Since we assume that all graded manifolds are finite dimensional, we simply call them DG manifolds of positive amplitude.
\end{remark}

Let $(\cM,Q)$ and $(\cN,R)$ be DG manifolds with bases $M$ and $N$, respectively, and
let $\cA$ and $\cB$ denote the sheaves associated with the graded manifolds $\cM$ and $\cN$, respectively. 
A morphism of graded manifolds $\Psi:\cM \to \cN$ consists of a morphism of smooth manifolds $f:M\to N$ and a morphism of sheaves of graded $\cO_{M}$-algebras $\psi:f^{\ast}\cB \to \cA$. 
A morphism $\Psi=(f,\psi):\cM\to \cN$ of graded manifolds induces a morphism of graded algebras $\Psi^{\ast}:\smooth{\cN}\to \smooth{\cM}$ which factors naturally through the map $\psi|_{M}:\smooth{M}\tensor_{\smooth{N}}\smooth{\cN} \to \smooth{\cM}$. If $\Psi^{\ast}$ is compatible with homological vector fields $Q$ and $R$, we say $\Psi$ is a morphism of DG manifolds. 

DG manifolds of positive amplitude, together with their morphisms form a category denoted by $\DGMfdCat$.

\subsection{Category of DG manifolds of positive amplitude}
Let $L=\bigoplus_{k=a}^{b}L^{k} \to M$ be a graded vector bundle of finite rank, 
equipped with a bundle map $\lambda=(\lambda_{0},\lambda_{1},\ldots):SL\to L$ 
where $\lambda_{n}:S^{n}L\to L$ for $n\geq 0$, and $S^{0}L=M\times \RR$ denotes the trivial line bundle over $M$. 
We may view $\lambda$ as a coderivation of the graded symmetric $\smooth{M}$-coalgebra $\sections{SL}$.

Assume that the fibres of $L$ are concentrated in positive degrees, i.e., $0<a\leq b$, and that $\lambda$ is of degree $+1$ satisfying $\lambda^{2}=0$ as a coderivation. 
Then the triple $(M,L,\lambda)$ is called a \textbf{bundle of positively graded curved $L_{\infty}[1]$ algebra}. 
Note that the condition $\lambda^{2}=0$ is equivalent to a sequence of compatibility conditions, starting with
\begin{equation}\label{eq:cL-infty}
 \lambda_{1}(\lambda_{0})=0, \quad \lambda_{2}(\lambda_{0},x)+\lambda_{1}^{2}(x)=0, \quad \cdots 
 \end{equation}
 for $x,y\in \sections{L}$.

A morphism of bundles of positively graded curved $L_{\infty}[1]$ algebras $\Phi:(M,L,\lambda)\to (N,E,\mu)$ 
consists of a morphism of smooth manifolds $f:M\to N$ and a bundle map $\phi=(\phi_{1},\phi_{2},\ldots):S^{\geq 1}L\to E$, where $\phi_{n}:S^{n}L\to E$, compatible with $\lambda$ and $\mu$. The compatibility conditions include
\[
 \phi_{1}(\lambda_{0})=\mu_{0}, \quad 
 \phi_{2}(\lambda_{0},x)+\phi_{1}(\lambda_{1}(x))=\mu_{1}( \phi_{1}(x)), \quad \cdots
\]
for $x, y\in \sections{L}$. Here, by abuse of notation, we denote the induced bundle maps over $M$ also by $\phi_{n}:S^{n}L\to f^{\ast}E$ and $\mu_{n}:f^{\ast}S^{n}E \to f^{\ast}E$.
If $\phi_{n}=0$ for all $n\geq 2$, then the morphism $\Phi$ is called linear. In this case, for each $n\geq 1$, the compatibility conditions reduce to
\[\phi_{1} \circ \lambda_{n}(x_{1},\cdots,x_{n}) = \mu_{n}(\phi_{1}(x_{1}),\cdots,\phi_{1}(x_{n}))\]
for $x_{1},\ldots, x_{n}\in \sections{L}$.

Bundles of positively graded curved $L_{\infty}[1]$ algebras and their morphisms form a category, which we denote by $\cLinfCat$.

Now, each $(M,L,\lambda)$ induces a DG manifold $(\cM,Q)$ of positive amplitude with base $M$: its graded algebra of functions is defined by $\smooth{\cM}=\sections{SL^{\vee}}$ and the homological vector field $Q=\lambda^{\transpose}$ is given by the dual map 
\[\lambda^{\transpose}=(\lambda_{0}^{\transpose},\lambda_{1}^{\transpose},\lambda_{2}^{\transpose},\ldots):L^{\vee}\to SL^{\vee}\] 
of $\lambda$, seen as a derivation on $\sections{SL^{\vee}}$. 
Note that the target of $\lambda^{\transpose}$ is the symmetric tensor bundle $SL^{\vee}$ (not its completion). This follows from the assumption that 
$L\to M$ is a graded vector bundle of finite rank concentrated in positive degrees and that $\lambda$ has degree $+1$. 

The above assignment defines a functor from $\cLinfCat$ to $\DGMfdCat$. An analogue of Batchelor's theorem \cite{MR536951} holds for DG manifolds of positive amplitude.
\begin{proposition}[\cite{MR4735657}]\label{prop:CatBundle}
The category $\DGMfdCat$ is equivalent to the category $\cLinfCat$.
\end{proposition}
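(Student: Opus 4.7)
The plan is to construct a quasi-inverse $\Xi:\DGMfdCat\to \cLinfCat$ to the functor already described, and to verify it is inverse up to natural isomorphism on both sides. Given a DG manifold $(\cM,Q)$ of positive amplitude with base $M$, the task is to produce a graded vector bundle $L\to M$ of finite rank concentrated in positive degrees, together with a degree $+1$ coderivation $\lambda:SL\to L$ satisfying $\lambda^{2}=0$, in such a way that $\smooth{\cM}\cong \sections{SL^{\vee}}$ as graded $\smooth{M}$-algebras and $Q$ is identified with $\lambda^{\transpose}$. What remains is (i) essential surjectivity of the given functor, which is a Batchelor-type statement, and (ii) full faithfulness, which is an exercise in the coalgebraic description of morphisms of symmetric algebras.

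For essential surjectivity, I would first extract $L$ from $\cM$ by considering the sheaf of ideals $\cI\subset \cA$ generated by functions of strictly negative degree; the quotient $\cI/\cI^{2}$ is a locally free graded $\cO_{M}$-module sitting in strictly negative degrees, and setting $L^{\vee}:=\cI/\cI^{2}$ produces a graded vector bundle $L\to M$ of finite rank in strictly positive degrees. To upgrade this to a global isomorphism $\smooth{\cM}\cong \sections{SL^{\vee}}$, I would choose local trivialisations $\cA|_{U_{\alpha}}\cong \cO_{U_{\alpha}}\tensor S(V^{\vee})$ on a locally finite cover and splice them using a smooth partition of unity, proceeding by induction on the filtration $\{\cI^{k}\}$. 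The positive amplitude hypothesis ensures that this filtration is effectively finite in each total degree (since $S^{k}L^{\vee}$ has degree $\leq -k$), so that the obstruction at each inductive stage lies in a locally free $\smooth{M}$-module which is acyclic by paracompactness. Once the identification $\smooth{\cM}\cong \sections{SL^{\vee}}$ is fixed, the standard duality between the free graded-commutative algebra and the cofree graded-cocommutative coalgebra on $L$---valid because the fibres of $L$ are finite-dimensional and strictly positively graded---converts the homological vector field $Q$ into a coderivation $\lambda=(\lambda_{0},\lambda_{1},\ldots)$ of $\sections{SL}$, and $Q^{2}=0$ becomes $\lambda^{2}=0$, i.e., the curved $L_{\infty}[1]$ relations~\eqref{eq:cL-infty}.

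For full faithfulness, any morphism $\Psi=(f,\psi):(\cM,Q)\to (\cN,R)$ determines a graded $\cO_{M}$-algebra morphism $\Psi^{\ast}:\sections{f^{\ast}SE^{\vee}}\to \sections{SL^{\vee}}$ that intertwines $Q$ and $R$. Dualising fibrewise and invoking the universal property of the cofree cocommutative coalgebra on the graded vector bundle $E$, this algebra morphism is uniquely determined by a sequence of bundle maps $\phi_{n}:S^{n}L\to f^{\ast}E$ for $n\geq 1$, and compatibility with $Q$ and $R$ translates term by term into the $L_{\infty}[1]$-morphism equations for $\Phi=(f,\phi)$. The finite rank of $L$ in each degree guarantees that the sums defining $\Psi^{\ast}$ from $\phi$ are locally finite, so the correspondence $\Psi\leftrightarrow \Phi$ is a bijection respecting composition.

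The main obstacle is the Batchelor-type step: the existence of a global identification $\smooth{\cM}\cong \sections{SL^{\vee}}$ is a genuine geometric input which fails in the holomorphic category but succeeds here because one may combine the smooth partition of unity with the strict positivity of the grading of $L$ to kill the successive extension classes along $\cI^{k}/\cI^{k+1}$. Everything downstream of this step is essentially formal manipulation of the symmetric (co)algebra; the finite rank of $L$ in each degree is what ensures the duality between algebras and coalgebras is tight and that the image of $\Xi$ sits inside $\cLinfCat$ rather than an infinite-dimensional enlargement.
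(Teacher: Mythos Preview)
Your outline is a reasonable sketch of the standard argument, but you should be aware that the paper does not actually prove this proposition: it is stated with a citation to~\cite{MR4735657} and no proof is given in the present paper. So there is no ``paper's own proof'' to compare against here.

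That said, your strategy matches the expected one. The essential surjectivity is indeed the Batchelor-type step, and your identification of $L^{\vee}$ with $\cI/\cI^{2}$ together with the partition-of-unity induction along the $\cI$-adic filtration is the right mechanism; the positive amplitude hypothesis is exactly what makes the filtration finite in each degree, so the induction terminates. The full faithfulness argument via the cofree coalgebra description of morphisms is likewise standard and correct. One small point worth tightening: when you say the obstruction at each stage ``lies in a locally free $\smooth{M}$-module which is acyclic by paracompactness,'' you should be explicit that the relevant extension classes live in a sheaf cohomology group $H^{1}$ of a fine sheaf over $M$, which vanishes; as written it could be misread as a statement about the module itself being acyclic as a complex.
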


As a consequence of the above proposition, any DG manifold of positive amplitude can be identified with a bundle of positively graded $L_{\infty}[1]$ algebra.

\subsection{Categories of fibrant objects}
We recall the notion of categories of fibrant objects, introduced by Brown \cite{MR341469}. A category $\cC$ 
is called a \textbf{category of fibrant objects} if it has finite products, a terminal object $T$ (also called a final object), and is equipped with two distinguished classes of morphisms: fibrations and weak equivalences. A morphism that is both a fibration and a weak equivalence is called an acyclic fibration (or a trivial fibration). 
The fibrations and weak equivalences must satisfy the following:
\begin{enumerate}
\item ($2$-out-of-$3$) If two of three morphisms $f:X\to Y$, $g:Y\to Z$ and $g\circ f:X\to Z$ are weak equivalences, then so is the third. Any isomorphism is a weak equivalence.
\item The composition of two fibrations is a fibration. Any isomorphism is a fibration.
\item Pullbacks of fibrations (resp. acyclic fibrations) are fibrations (resp. acyclic fibrations).
\item For any object $X$, the morphism $X\to T$ is a fibration.
\item For any object $X$, there is an object $P_{X}$, called a path space for $X$, such that the diagonal map $\Delta_{X}:X\to X\times X$ factors through a weak equivalence $f:X\to P_{X}$ and a fibration $g:P_{X}\to X\times X$.
\end{enumerate}

The following lemma is known as the factorisation lemma.
\begin{lemma}[\cite{MR341469}] \label{lem:Fac}
Let $\cC$ be a category of fibrant objects. Any morphism $f:X\to Y$ can be factored into $f=p\circ i$ where $i$ is a weak equivalence and $p$ is a fibration. 
Moreover, the weak equivalence $i$ is a right inverse of an acyclic fibration.
\end{lemma}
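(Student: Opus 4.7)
The plan is to use the path space $P_Y$ of $Y$ to replace $f:X\to Y$ by a fibration via a pullback construction. Recall that $P_Y$ comes with a weak equivalence $s:Y\to P_Y$ and a fibration $(d_0,d_1):P_Y\to Y\times Y$ factoring the diagonal. My first observation is that each projection $d_i:P_Y\to Y$ is an \emph{acyclic} fibration. Indeed, $d_i$ factors as $\pr_i\circ(d_0,d_1)$, where $\pr_i:Y\times Y\to Y$ is obtained as the pullback of the fibration $Y\to T$ (axiom (4)) along $Y\to T$, hence a fibration by axiom (3); so $d_i$ is a composition of fibrations. Moreover, $d_i\circ s=\id_Y$ and $s$ is a weak equivalence, so 2-out-of-3 forces $d_i$ to be a weak equivalence as well.

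I would then form the pullback
\[
\begin{tikzcd}
X\times_Y P_Y \arrow{r}{\pi_P}\arrow{d}{\pi_X} & P_Y \arrow{d}{d_0}\\
X \arrow{r}{f} & Y
\end{tikzcd}
\]
so that $\pi_X$ is an acyclic fibration by axiom (3). The universal property provides a morphism $i:X\to X\times_Y P_Y$ induced by $\id_X:X\to X$ and $s\circ f:X\to P_Y$, which are compatible since $d_0\circ s\circ f=f$. By construction $\pi_X\circ i=\id_X$, so $i$ is a right inverse of the acyclic fibration $\pi_X$; 2-out-of-3 forces $i$ to be a weak equivalence, which simultaneously establishes the ``moreover'' clause.

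It remains to set $p:=d_1\circ\pi_P:X\times_Y P_Y\to Y$ and verify that $p\circ i=f$ and that $p$ is a fibration. The identity $p\circ i(x)=d_1(s(f(x)))=f(x)$ is immediate. For the fibration property, I would re-present $X\times_Y P_Y$ as a \emph{different} pullback, namely that of $(d_0,d_1):P_Y\to Y\times Y$ along $f\times\id_Y:X\times Y\to Y\times Y$; axiom (3) then renders the induced map $X\times_Y P_Y\to X\times Y$, $(x,\gamma)\mapsto(x,d_1(\gamma))$, a fibration, and composing with the projection $\pr_Y:X\times Y\to Y$ (a fibration by the same pullback argument as above) realises $p$ as a composition of fibrations.

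The main subtlety is precisely this last point: a naive description of $p=d_1\circ\pi_P$ using only the pullback along $d_0$ does not display $p$ as a fibration, since $\pi_P$ need not be a fibration when $f$ is arbitrary. The resolution is to recognise $X\times_Y P_Y$ as a pullback of the full fibration $(d_0,d_1)$, using both endpoints of the path space rather than just one, after which the remaining verifications are straightforward applications of the axioms of a category of fibrant objects.
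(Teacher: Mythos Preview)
Your proof is correct and is precisely the classical argument from Brown's original paper~\cite{MR341469}; the paper under review does not supply its own proof of this lemma but simply cites Brown, so there is nothing further to compare.
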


\begin{corollary}\label{cor:Fac}
In a category of fibrant objects, the source and target of a weak equivalence are linked by a pair of acyclic fibrations.
\end{corollary}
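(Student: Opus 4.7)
The plan is to derive the corollary directly from the factorisation lemma (Lemma~\ref{lem:Fac}) together with the $2$-out-of-$3$ property of weak equivalences. The goal is to produce, for a given weak equivalence $f:X\to Y$, an object $Z$ together with acyclic fibrations $q:Z\to X$ and $p:Z\to Y$.

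First, I would apply Lemma~\ref{lem:Fac} to the weak equivalence $f:X\to Y$ to obtain a factorisation $f=p\circ i$, where $i:X\to Z$ is a weak equivalence and $p:Z\to Y$ is a fibration. Since both $f$ and $i$ are weak equivalences, the $2$-out-of-$3$ property forces $p:Z\to Y$ to be a weak equivalence as well, hence an acyclic fibration. This produces the first of the two desired acyclic fibrations.

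Next, I would extract the second acyclic fibration from the additional content of Lemma~\ref{lem:Fac}, which states that the weak equivalence $i$ arising in the factorisation is a right inverse of some acyclic fibration. That is, there exists an acyclic fibration $q:Z\to X$ with $q\circ i=\id_{X}$. Together, $q:Z\to X$ and $p:Z\to Y$ form the required pair of acyclic fibrations with common source $Z$, linking $X$ and $Y$.

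There is no substantial obstacle: the proof is essentially a two-line consequence of Lemma~\ref{lem:Fac} and axiom~(1) of a category of fibrant objects. The only point to be careful about is the correct reading of ``right inverse of an acyclic fibration'' in the statement of Lemma~\ref{lem:Fac}, which is precisely what provides the second acyclic fibration $q:Z\to X$ needed to complete the argument.
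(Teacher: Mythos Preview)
Your proposal is correct and is exactly the intended derivation: the paper states the corollary without proof, but the implicit argument is precisely the one you give — factor $f=p\circ i$ via Lemma~\ref{lem:Fac}, use $2$-out-of-$3$ to make $p$ acyclic, and use the ``right inverse of an acyclic fibration'' clause to obtain $q:Z\to X$.
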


The following is due to \cite{MR4735657}.
\begin{theorem}[\cite{MR4735657}] \label{thm:CatFib}
The category $\DGMfdCat$ of DG manifolds of positive amplitude is a category of fibrant objects.
\end{theorem}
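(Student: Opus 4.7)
The plan is to transfer the problem to $\cLinfCat$ via the equivalence of Proposition~\ref{prop:CatBundle}, where morphisms are given by concrete bundle maps and the five axioms of a category of fibrant objects become tractable. First I would distinguish the two required classes of morphisms: a morphism $(f,\phi):(M,L,\lambda)\to(N,E,\mu)$ is declared a fibration when $f:M\to N$ is a surjective submersion and the linear part $\phi_{1}:L\to f^{\ast}E$ is fibrewise surjective, and a weak equivalence when, at every classical point $p\in M$ (the zero locus of $\lambda_{0}$), the induced morphism of tangent complexes assembling $(L|_{p},\lambda_{1}(p))$ with $T_{p}M$ in degree $0$ is a quasi-isomorphism. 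The terminal object is the one-point manifold with trivial bundle, and finite products are given by cartesian products of bases together with exterior direct sums of bundles, with structure maps assembled componentwise.

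Next I would verify the easier axioms. The $2$-out-of-$3$ property for weak equivalences reduces to the corresponding statement for quasi-isomorphisms of chain complexes at each classical point. Composition of fibrations is stable because surjective submersions and fibrewise surjective bundle maps both compose. For the stability of fibrations (and acyclic fibrations) under pullback along an arbitrary morphism $(g,\gamma):(P,F,\kappa)\to(N,E,\mu)$, the base pullback $M\times_{N}P$ exists as a smooth manifold because $f$ is a surjective submersion, and one constructs the pullback bundle as the fibre product $g^{\ast}L\times_{g^{\ast}f^{\ast}E}\pi^{\ast}F$ over it; the fibrewise surjectivity of $\phi_{1}$ ensures that the $L_{\infty}[1]$ structure maps $\lambda$, $\mu$, $\kappa$ descend to a curved $L_{\infty}[1]$ structure on this pullback satisfying the required compatibility. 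The axiom that $\cM\to T$ is always a fibration is immediate from the definition.

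The main obstacle, and the only step requiring a nontrivial construction, is the existence of path objects. For each $(M,L,\lambda)$ I would build $P_{(M,L)}$ whose base is $M$ and whose bundle is $L\oplus L\oplus L[-1]$, equipped with structure maps modelled on the standard cylinder for $L_{\infty}$ algebras (the $L_{\infty}[1]$ analogue of $\frakg\otimes \Omega^{\bullet}([0,1])$, truncated to two degrees), adapted to accommodate curvature and shifted to remain within the positive amplitude range. Since $L$ is concentrated in degrees $\geq 1$, the shift $L[-1]$ sits in degrees $\geq 2$, so $P_{(M,L)}$ remains an object of $\DGMfdCat$. The inclusion $x\mapsto (x,x,0)$ factors the diagonal and is a weak equivalence at each classical point by an explicit contracting homotopy onto the diagonal summand, while the projection $(x,y,z)\mapsto (x,y)$ is a fibration by inspection: on bases it is the identity, and on bundles the linear part is the obvious projection $L\oplus L\oplus L[-1]\to L\oplus L$, which is fibrewise surjective. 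Verifying that the assembled structure maps on $L\oplus L\oplus L[-1]$ actually satisfy the curved $L_{\infty}[1]$ relations~\eqref{eq:cL-infty} is the most delicate bookkeeping, and would constitute the technical heart of the argument.
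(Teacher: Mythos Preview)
The paper does not prove this theorem at all; it simply cites it from \cite{MR4735657} (Behrend--Liao--Xu) and then uses it as a black box. So there is no ``paper's own proof'' to compare against, and your proposal is really an attempt to reconstruct the argument of the cited reference.

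Your outline is broadly the right shape, but there are two genuine gaps. First, your description of weak equivalences omits condition~(1) of Definition~\ref{defn:WE}: the underlying map on bases must induce a \emph{bijection between classical loci}, not just a quasi-isomorphism on tangent complexes at each classical point of the source. This matters immediately for the $2$-out-of-$3$ axiom: reducing to ``quasi-isomorphisms at each classical point'' does not by itself handle the bijection-on-loci requirement. For instance, if $g$ and $g\circ f$ are weak equivalences, showing that $f$ induces a bijection on classical loci requires an argument (injectivity is easy; surjectivity is not automatic from the data you have written down and typically uses the tangent-complex condition together with a local inverse-function-theorem style argument). Second, the path object you propose, with underlying bundle $L\oplus L\oplus L[-1]$ over the \emph{same} base $M$, is essentially the construction used in \cite{MR4735657}, but the curved $L_{\infty}[1]$ structure maps you need are not just ``modelled on $\frakg\otimes\Omega^{\bullet}([0,1])$''; in the curved setting one has to specify how the curvature $\lambda_{0}$ interacts with the extra summand $L[-1]$, and checking that the diagonal inclusion is a weak equivalence again requires controlling the classical locus (which for the path object coincides with that of $(M,L,\lambda)$). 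None of this is insurmountable, but the verification is substantially more than bookkeeping, and is precisely the content of the cited paper.
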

By Proposition~\ref{prop:CatBundle}, the notions of fibrations and weak equivalences in $\DGMfdCat$ correspond to
those in $\cLinfCat$.
Following~\cite{MR4735657}, we briefly describe fibrations and weak equivalences in $\cLinfCat$ below.

For the remainder of this section, $(f,\phi)=\Phi:(M,L,\lambda)\to (N,E,\mu)$ is a morphism of bundles of positively graded curved $L_{\infty}[1]$ algebras. 

\begin{definition}[\cite{MR4735657}]
We say $\Phi$ is a \textbf{fibration} if $f$ is a submersion and the linear component $\phi_{1}:L\to f^{\ast}E$ of $\phi$ is a degree-wise surjective bundle map over $M$. If a fibration $\Phi$ is linear, then $\Phi$ is called a linear fibration.
\end{definition}

\begin{lemma}[\cite{MR4735657}] \label{lem:LinFib}
Every fibration can be factored as an isomorphism followed by a linear fibration.
\end{lemma}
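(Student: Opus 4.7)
The plan is to realize the factorization over the identity on the base $M$ by keeping the underlying bundle $L$ but equipping it with a new structure $\tilde\lambda$, and setting
\[
\Psi = (\id_M,\psi)\colon (M,L,\lambda)\to (M,L,\tilde\lambda), \qquad \Phi_{\mathrm{lin}} = (f,\phi_1)\colon (M,L,\tilde\lambda)\to (N,E,\mu),
\]
with $\psi_1=\id_L$ (so $\Psi$ will automatically be an isomorphism) and $\phi_1$ the only nontrivial component of $\Phi_{\mathrm{lin}}$ (so it is linear). Since $\phi_1$ is degreewise surjective and $f$ is a submersion by the fibration hypothesis, $\Phi_{\mathrm{lin}}$ will automatically be a linear fibration once it is shown to be an $L_\infty[1]$-morphism. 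The substantive task is therefore to construct $\psi$ and $\tilde\lambda$.

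Since $\phi_1\colon L\to f^{\ast}E$ is degreewise surjective, I would first choose a smooth degreewise section $s\colon f^{\ast}E\to L$ with $\phi_1\circ s=\id$ (for instance via a fibrewise metric on $L$ with $s$ taking values in the orthogonal complement of $\ker\phi_1$). Define
\[
\psi_1=\id_L, \qquad \psi_n=s\circ\phi_n\colon S^{n}L\to L\quad (n\geq 2).
\]
Assembled as a morphism of graded cocommutative $\smooth{M}$-coalgebras, $\bar\psi\colon\sections{SL}\to\sections{SL}$ is invertible because $\psi_1$ is an isomorphism, its inverse being constructed recursively on arity by standard $L_\infty$-calculus. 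Transport the codifferential by conjugation, $\bar{\tilde\lambda}:=\bar\psi\circ\bar\lambda\circ\bar\psi^{-1}$; this is again a degree $+1$ coderivation squaring to zero, its linear projection defines bundle maps $\tilde\lambda_n\colon S^{n}L\to L$, and the positive amplitude hypothesis bounds their arities, so $\tilde\lambda$ is a bona fide bundle of positively graded curved $L_\infty[1]$-algebras on $(M,L)$. By construction, $\Psi$ is then an $L_\infty[1]$-isomorphism from $(M,L,\lambda)$ to $(M,L,\tilde\lambda)$.

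The heart of the verification is the coalgebra-level identity $\bar\Phi = S(\phi_1)\circ\bar\psi$, where $S(\phi_1)$ is the coalgebra morphism induced by $\phi_1$ alone, corresponding to the linear $L_\infty[1]$-morphism $\Phi_{\mathrm{lin}}$. Expanding both sides via the universal formula for coalgebra morphisms as sums over partitions, the identity reduces to $\phi_1\circ\psi_n=\phi_n$ for every $n\geq 1$, which holds tautologically by construction ($\phi_1\circ\id_L=\phi_1$ and $\phi_1\circ s\circ\phi_n=\phi_n$ since $\phi_1\circ s=\id$). Hence $\Phi=\Phi_{\mathrm{lin}}\circ\Psi$. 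Conjugating the $L_\infty[1]$-compatibility $\bar\Phi\circ\bar\lambda=\bar\mu\circ\bar\Phi$ by $\bar\psi^{-1}$ and using $\bar{\tilde\lambda}=\bar\psi\bar\lambda\bar\psi^{-1}$ yields $S(\phi_1)\circ\bar{\tilde\lambda}=\bar\mu\circ S(\phi_1)$, which is precisely the statement that $\Phi_{\mathrm{lin}}$ is a linear morphism of curved $L_\infty[1]$-bundles.

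The main point requiring care is the bookkeeping around the curvature: one needs the compatibility $\phi_1\circ\tilde\lambda_0=\mu_0$, together with the higher curved conditions (compare~\eqref{eq:cL-infty}). Because $\bar\psi$ fixes the coaugmentation (morphisms in $\cLinfCat$ carry no $\psi_0$), the transported curvature satisfies $\tilde\lambda_0=\lambda_0$, and $\phi_1\circ\tilde\lambda_0=\phi_1\circ\lambda_0=\mu_0$ follows from the corresponding compatibility already satisfied by $\Phi$; the higher curved identities are likewise subsumed in the single coalgebra equation $S(\phi_1)\circ\bar{\tilde\lambda}=\bar\mu\circ S(\phi_1)$. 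Everything remains finite by the positive amplitude assumption, so no completions of $SL$ are required.
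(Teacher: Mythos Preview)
The paper does not supply its own proof of this lemma; it merely cites the result from~\cite{MR4735657}. So there is no in-paper argument to compare your proposal against.

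That said, your argument is correct and is essentially the standard one. The key identity $\bar\Phi = S(\phi_1)\circ\bar\psi$ reduces, after projecting to $L$, to $\phi_1\circ\psi_n=\phi_n$, which your choice $\psi_n=s\circ\phi_n$ (with $\phi_1\circ s=\id$) guarantees; the transported structure $\bar{\tilde\lambda}=\bar\psi\,\bar\lambda\,\bar\psi^{-1}$ is automatically a square-zero coderivation; and the compatibility $S(\phi_1)\circ\bar{\tilde\lambda}=\bar\mu\circ S(\phi_1)$ then follows by conjugating the given compatibility of $\Phi$. Your remarks on the curvature ($\tilde\lambda_0=\lambda_0$ because $\bar\psi$ is the identity on $S^0L$ and $S^1L$) and on finiteness of arities under the positive-amplitude hypothesis (so no completions are needed) are both to the point. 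One small notational caveat: the $\bar\mu$ appearing in your equations should be understood as the induced coderivation on $\sections{S(f^{\ast}E)}$ over $M$, consistent with the paper's convention of writing $\mu_n$ for the pulled-back maps.
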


Next, we describe the notion of weak equivalences. 

Assume that $L=\bigoplus_{k=1}^{b}L^{k}$. 
Recall that the zeroth coefficient of $\lambda$ is a section $\lambda_{0}:M\to L^{1}$, and the first coefficient of $\lambda$ is a bundle map $\lambda_{1}:L^{\bullet}\to L^{\bullet+1}$. For each point $p\in M$ satisfying $\lambda_{0}(p)=0$, 
one obtains a cochain complex
\begin{equation}\label{eq:TangentComplex}
\begin{tikzcd}
0 \arrow{r} & \tangentp{p}{M} \arrow{r}{D_{p}\lambda_{0}} & L^{1}|_{p} \arrow{r}{\lambda_{1}|_{p}} & L^{2}|_{p} \arrow{r}{\lambda_{1}|_{p}} & \cdots \arrow{r}{\lambda_{1}|_{p}} & L^{b}|_{p} \arrow{r} & 0
\end{tikzcd}
\end{equation}
called the tangent complex of $(M,L,\lambda)$ at $p$, where $L^{k}|_{p}$ denotes the fibre of $L^{k}$ at $p$ and
$D_{p}\lambda_{0}:\tangentp{p}{M}\to L^{1}|_{p}$ is defined by the composition
\[
\begin{tikzcd}
 \tangentp{p}{M} \arrow{r}{(\lambda_{0})_{\ast}|_{p}} & \tangentp{\lambda_{0}(p)}{L^{1}} \cong \tangentp{p}{M}\oplus L^{1}|_{p} \arrow{r}{\pr} & L^{1}|_{p}
\end{tikzcd}
\]
of the tangent map $(\lambda_{0})_{\ast}|_{p}$ of $\lambda_{0}$ at $p$ and the natural projection $\pr$. 
Note that since $\lambda_{0}(p)=0\in L^{1}|_{p}$, the isomorphism $\tangentp{\lambda_{0}(p)}{L^{1}} \cong \tangentp{p}{M}\oplus L^{1}|_{p}$ is canonical.

\begin{definition}[\cite{MR4735657}]\label{defn:WE}
A point $p\in M$ satisfying $\lambda_{0}(p)=0$ is called a \textbf{classical point} of $(M,L,\lambda)$. The set of all classical points
is called the classical locus and is denoted by $Z(\lambda_{0})$.
We say that $\Phi$ is a \textbf{weak equivalence} if
\begin{enumerate}
\item $f$ induces a bijection between the classical loci, and
\item for each $p\in Z(\lambda_{0})$, the induced maps 
\[f_{\ast}|_{p}:\tangentp{p}{M} \to \tangentp{f(p)}{N}, \quad \phi_{1}|_{p}:L|_{p}\to E|_{f(p)}\]
give a quasi-isomorphism of the tangent complexes.
\end{enumerate}
\end{definition}
 
A weak equivalence induces a quasi-isomorphism in $\DGMfdCat$.
\begin{proposition}[\cite{arXiv:2307.08179}]
Let $\Psi:(\cM,Q)\to (\cN,R)$ be a weak equivalence between DG manifolds of positive amplitude. Then the induced morphism of DG algebras
\[\Psi^{\ast}:(\smooth{\cN},R)\to (\smooth{\cM},Q)\]
 is a quasi-isomorphism.
\end{proposition}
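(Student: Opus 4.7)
The plan is to reduce, using the category-of-fibrant-objects structure on $\DGMfdCat$, to the case of an acyclic linear fibration, and then to carry out a local analysis on the base $M$. By Corollary~\ref{cor:Fac}, every weak equivalence $\Psi:(\cM,Q)\to(\cN,R)$ is linked to its source and target by a pair of acyclic fibrations $\alpha$ and $\beta$ with $\beta=\Psi\circ\alpha$; Lemma~\ref{lem:LinFib} further factors each as an isomorphism followed by a linear fibration. Since isomorphisms of DG manifolds of positive amplitude visibly induce isomorphisms on the DG algebras of smooth functions, the $2$-out-of-$3$ property for quasi-isomorphisms reduces the statement to showing that $\Psi^{\ast}$ is a quasi-isomorphism whenever $\Psi$ is an acyclic linear fibration, corresponding under Proposition~\ref{prop:CatBundle} to a pair $(f,\phi_{1})$ with $f:M\to N$ a submersion and $\phi_{1}:L\to f^{\ast}E$ degree-wise surjective.

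Being a quasi-isomorphism can be checked locally on $M$, so a partition of unity argument lets me work over an arbitrarily small neighbourhood $U\subset M$ of a fixed point $p\in M$, after shrinking $N$ around $f(p)$. Over such a $U$ I would choose a splitting $L|_{U}\cong f^{\ast}E|_{U}\oplus K|_{U}$ with $K=\ker\phi_{1}$, together with a slice chart for the submersion $f$. In these coordinates, $\smooth{\cM}$ is presented locally as $\smooth{\cN}$ tensored with a complementary complex built from fiber coordinates for $f$ and $\sections{S(K^{\vee})}$, and $\Psi^{\ast}$ is realised as the inclusion of the $\smooth{\cN}$-factor. The statement therefore reduces to acyclicity of this complementary complex, equipped with the differential induced by $Q$ through the curved $L_{\infty}[1]$ structure $\lambda$. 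At non-classical points, where $\lambda_{0}(p)\neq 0$, the component $\lambda_{0}$ provides an explicit Koszul-type contracting homotopy on a neighbourhood of $p$. At classical points $p\in Z(\lambda_{0})$, I would filter the complementary complex by polynomial degree in $K^{\vee}$ (and in the $f$-normal coordinates) and study the associated spectral sequence, whose $E_{1}$-page is governed by the tangent complex of $(M,L,\lambda)$ at $p$ relative to that of $(N,E,\mu)$ at $f(p)$; this $E_{1}$-page is acyclic by the tangent-complex quasi-isomorphism built into Definition~\ref{defn:WE}.

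The main obstacle will be to make the classical-case argument uniform over a neighbourhood of $p$ rather than merely at the single point $p$. This calls for a lower semi-continuity argument on the ranks of the linear bundle maps $\phi_{1}$, $\lambda_{1}$, and the fibrewise differential of $\lambda_{0}$, so that the tangent-complex splitting used at $p$ extends to a splitting over a whole neighbourhood and thereby yields a well-defined complementary subcomplex that is fibrewise acyclic. The positive-amplitude hypothesis is essential throughout: it bounds the polynomial filtration on each internal degree, ensuring strong convergence of the spectral sequence and avoiding any completion phenomena that would otherwise arise in the symmetric algebra $\sections{S(L^{\vee})}$.
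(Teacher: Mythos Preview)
The paper does not actually prove this proposition: it is quoted from \cite{arXiv:2307.08179} and stated without argument, so there is no in-paper proof to compare against.

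That said, your strategy is sound and in fact closely parallels the methodology the paper develops for its own main results on tensor bundles (Theorems~\ref{thm:main1}--\ref{thm:main3}). The reduction to acyclic linear fibrations via Corollary~\ref{cor:Fac} and Lemma~\ref{lem:LinFib} is exactly Lemma~\ref{lem:ALF}; the passage to a local statement via partitions of unity is the content of Lemma~\ref{lem:Locality} and Corollary~\ref{cor:Locality}; your treatment of non-classical points is the mechanism behind Lemma~\ref{lem:NonClassical} (the paper invokes \cite[Lemma~5.4]{arXiv:2303.11140} for the Koszul-type contraction you describe); and your lower semi-continuity argument at classical points is precisely what the paper formalises in Lemma~\ref{lem:U-Exact}. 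So while the paper outsources this particular statement, you have correctly identified that the same toolkit it builds for vector fields would establish the function-algebra case as well.

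One point to tighten: your claim that locally ``$\Psi^{\ast}$ is realised as the inclusion of the $\smooth{\cN}$-factor'' and that the problem reduces to acyclicity of a complementary complex is not quite a tensor-product decomposition of DG algebras, since $\lambda$ applied to sections of $f^{\ast}E$ may have components in $K$. You should instead argue via the cokernel of the injective map $\Psi^{\ast}$ (as the paper does with $\coker\Phi^{\star}$ in Lemma~\ref{lem:CotangentEquiv}) or filter by symmetric degree in $K^{\vee}$ and the $f$-normal coordinates; the positive-amplitude bound you already invoke then guarantees convergence.
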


\section{DG vector bundles}

A \textbf{DG vector bundle} over a DG manifold is a vector bundle object in the category of DG manifolds. 
Given a DG manifold $(\cM,Q)$, 
it was shown in \cite{MR3319134} that a graded vector bundle $\cE$ over $\cM$ has a DG vector bundle structure over $(\cM,Q)$ if and only if its space of sections $\sections{\cE}$ admits a DG $(\smooth{\cM},Q)$-module structure. We denote by $\cQ_{\cE}$ the differential making $(\sections{\cE},\cQ_{\cE})$ into a DG $(\smooth{\cM},Q)$-module. The corresponding DG vector bundle will be denoted by $(\cE,\cQ_{\cE})$.

\begin{remark}\label{rem:Sheaf}
It is often useful to describe DG vector bundles in terms of sheaves: if $(\cM,Q)$ is a DG manifold with base $M$, then a DG vector bundle is equivalent to a locally free sheaf of DG $(\cA,Q)$-modules on $M$, where $\cA$ denotes the sheaf associated with the graded manifold $\cM$. See \cite{MR3319134, MR2709144} for details.
\end{remark}

\subsection{Structure of DG vector bundles and acyclicity}
Let $(\cM,Q)$ be a DG manifold of positive amplitude with base $M$. In this subsection, we denote $\fM=\smooth{M}$ and $\fcM=\smooth{\cM}$ for simplicity of notation.

The following proposition describes the structure of DG vector bundles over $(\cM,Q)$.
\begin{proposition}\label{prop:BaseStr}
Let $(\cM,Q)$ be a DG manifold of positive amplitude with base $M$.
Given a DG vector bundle $(\cE, \cQ_{\cE})$, 
there exists a graded vector bundle $E=\bigoplus_{k=a}^{b}E^{k}$ over $M$ 
and an isomorphism of DG $(\fcM,Q)$-modules
\begin{equation}
(\sections{\cE},\cQ_{\cE})\cong (\fcM\tensor_{\fM}\sections{E},D)
\end{equation}
where the differential $D:\fcM \tensor_{\fM}\sections{E}\to \fcM \tensor_{\fM}\sections{E}$
can be written as 
$D=(d_{ij})_{a\leq i,j\leq b}$ 
with $d_{ij}:\fcM \tensor_{\fM}\sections{E^{j}}\to \fcM\tensor_{\fM}\sections{E^{i}}$
satisfying
\begin{enumerate}
\item $d_{ij}=0$ if $i<j$,
\item $d_{ii}=Q\tensor \id$ for all $i$,
\item $d_{ij}$ is $\fcM$-linear if $i>j$.
\end{enumerate}
\end{proposition}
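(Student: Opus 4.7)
The plan is to split the argument into a purely $\fcM$-module-theoretic step (an analogue of Batchelor's theorem for DG vector bundles) and a degree-bookkeeping step that pins down the shape of the differential.

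First I would ignore the differential and produce an isomorphism of graded $\fcM$-modules $\sections{\cE}\cong\fcM\tensor_{\fM}\sections{E}$ for some graded vector bundle $E\to M$. Let $\fcM^{<0}\subset\fcM$ be the graded ideal of elements of negative degree; under $\fcM\cong\sections{SL^{\vee}}$ this is $\sections{S^{\geq 1}L^{\vee}}$, and $\fcM/\fcM^{<0}\cong\fM$. Define $E$ by $\sections{E}:=\sections{\cE}/\fcM^{<0}\sections{\cE}$, which is locally free over $\fM$ by local triviality of $\cE$ and hence a graded vector bundle on $M$. Local trivialisations of $\cE$ provide local $\fM$-linear sections $\iota_{\alpha}:\sections{E}|_{U_{\alpha}}\to\sections{\cE}|_{U_{\alpha}}$ of the quotient map, and a partition of unity $\{\rho_{\alpha}\}$ on $M$ (which lives in degree $0$) glues them into a global $\fM$-linear splitting $\iota:=\sum_{\alpha}\rho_{\alpha}\iota_{\alpha}$. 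Extending $\fcM$-linearly yields $\tilde{\iota}:\fcM\tensor_{\fM}\sections{E}\to\sections{\cE}$, which reduces to the identity modulo $\fcM^{<0}$. Writing $\tilde{\iota}=\id+N$, the operator $N$ is $\fcM$-linear and, by a local-trivialisation computation, sends $1\tensor\sections{E^{j}}$ into $\bigoplus_{j'>j}\fcM^{j-j'}\tensor_{\fM}\sections{E^{j'}}$. Since $E$ is concentrated in degrees $[a,b]$, iterating shows $N^{b-a+1}=0$, so $\tilde{\iota}$ is invertible via a finite Neumann series and the desired graded isomorphism exists.

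Next, transport $\cQ_{\cE}$ to a degree $+1$ differential $D$ on $\fcM\tensor_{\fM}\sections{E}$. The Leibniz identity
\[
D(f\tensor s)=Q(f)\tensor s+(-1)^{\degree{f}}f\cdot D(1\tensor s)
\]
shows that $D$ is determined by its restriction to $1\tensor\sections{E}$. For $s\in\sections{E^{j}}$, decompose $D(1\tensor s)=\sum_{i}h_{ij}(s)$ with $h_{ij}(s)\in\fcM\tensor_{\fM}\sections{E^{i}}$. Since the total degree of $D(1\tensor s)$ equals $j+1$, the $\fcM$-component of $h_{ij}(s)$ sits in degree $j+1-i$; here the positive-amplitude hypothesis is essential, since $\fcM$ is concentrated in non-positive degrees, forcing $h_{ij}=0$ whenever $j+1-i>0$, i.e., whenever $i\leq j$. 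Setting $d_{ii}:=Q\tensor\id$ and, for $i>j$, $d_{ij}(f\tensor s):=(-1)^{\degree{f}}f\cdot h_{ij}(s)$, one obtains the claimed decomposition $D=\sum_{i,j}d_{ij}$ with $d_{ij}=0$ for $i<j$, $d_{ii}=Q\tensor\id$, and $d_{ij}$ genuinely $\fcM$-linear for $i>j$.

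I expect the only genuine obstacle to be the Batchelor-type splitting in the first step; once it is in hand, the second step is pure degree counting dictated by positive amplitude. The delicate point in the splitting is that the partition-of-unity construction is manifestly only $\fM$-linear, so one must argue separately that its $\fcM$-linear extension is a bona fide isomorphism. This reduces to the nilpotence of $N=\tilde{\iota}-\id$, which ultimately comes down to the fact that $L$ is concentrated in strictly positive degrees---precisely the positive-amplitude hypothesis.
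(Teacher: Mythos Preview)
Your argument is correct, and the degree-counting step for the differential is essentially identical to the paper's. The difference lies in the first step, producing the graded isomorphism $\sections{\cE}\cong\fcM\tensor_{\fM}\sections{E}$. The paper defers this to an appendix (Proposition~\ref{prop:AppGradedStr}), where it is deduced from a homotopy-invariance theorem for graded vector bundles: one identifies $\cM$ with the total space of $L\to M$, uses the scaling homotopy $f_t(p,v)=(p,tv)$ between $\id_{\cM}$ and the projection to $M$, and applies parallel transport along a connection to conclude $\cE\cong\pi^{\ast}(\cE|_M)$. Your route is instead purely algebraic: quotient by $\fcM^{<0}$ to define $E$, patch local splittings by a partition of unity, and invert the resulting $\fcM$-linear map via nilpotence of its off-diagonal part. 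Your approach is more self-contained and avoids developing connection theory on graded bundles; the paper's approach is more geometric and yields the general homotopy-invariance statement as a byproduct. One stylistic point: your equation $\tilde{\iota}=\id+N$ only makes literal sense after passing to a local trivialisation (since source and target are a priori different modules), so it would be cleaner to phrase the invertibility check as a local computation from the outset.
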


\begin{proof}
The graded module structure follows from Proposition~\ref{prop:AppGradedStr}.

To characterise the differential $D$, observe that the grading of $\fcM$ is concentrated in non-positive degrees, and that $Q(f)=0$ for all $f\in \fM$. 
Thus, the degree $1$ operator $Q\tensor \id : \fcM\tensor_{\fM}\sections{E}\to \fcM\tensor_{\fM}\sections{E}$ defines a well-defined $(\fcM,Q)$-module structure on $\fcM \tensor_{\fM}\sections{E}$. 
Moreover, the operator $D-(Q\tensor \id)$ is $\fcM$-linear, so each $d_{ij}$ is completely determined by its restriction to $\sections{E^{j}}$.

By degree consideration, one checks that $d_{ij}=0$ for $i<j$ and $d_{ii}-(Q\tensor \id) =0$ for all $i$. This completes the proof.
\end{proof}

The following lemma and corollaries will be useful throughout this paper.

\begin{lemma}\label{lem:KeyLem}
Let $(\cE,\cQ_{\cE})$ be a DG vector bundle over a DG manifold of positive amplitude $(\cM,Q)$, and let $(C,d_{C})$ be an acyclic DG $(\fcM,Q)$-module. Then the cochain complex $(C\tensor_{\fcM}\sections{\cE}, d_{C}\tensor \id + \id \tensor \cQ_{\cE})$ obtained by tensoring over $(\fcM,Q)$ is also acyclic.
\end{lemma}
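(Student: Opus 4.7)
The plan is to use the structural trivialisation in Proposition~\ref{prop:BaseStr} to reduce the statement to an acyclicity claim for a perturbed tensor complex, which can then be handled by a straightforward induction on the amplitude of the underlying graded vector bundle.

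First, I would apply Proposition~\ref{prop:BaseStr} to identify $(\sections{\cE},\cQ_{\cE})$ with $(\fcM\tensor_{\fM}\sections{E},D)$, where $E=\bigoplus_{k=a}^{b}E^{k}$ is a graded vector bundle of finite amplitude over $M$ and $D=(Q\tensor\id)+D'$, with $D'$ an $\fcM$-linear operator that strictly raises the $E$-grading. Under the canonical identification $C\tensor_{\fcM}(\fcM\tensor_{\fM}\sections{E})\cong C\tensor_{\fM}\sections{E}$, and using the Leibniz rule for $d_{C}$ together with the vanishing $Q|_{\fM}=0$ (which holds because $\fcM$ is concentrated in non-positive degrees, making $d_{C}$ genuinely $\fM$-linear), one verifies that the transferred differential on $C\tensor_{\fM}\sections{E}$ has the form $\partial=(d_{C}\tensor\id)+\widetilde{D}'$, where $\widetilde{D}'$ still strictly raises the $E$-grading.

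I would then argue by induction on the amplitude $b-a$. For the base case $b=a$, the perturbation $\widetilde{D}'$ vanishes and $\partial$ reduces to $d_{C}\tensor\id$. The section module $\sections{E^{a}}$ is a finitely generated projective $\fM$-module by the smooth Serre--Swan theorem, hence a direct summand of a free $\fM$-module $\fM^{n}$; $\fM$-linearity of $d_{C}$ then exhibits $(C\tensor_{\fM}\sections{E^{a}},d_{C}\tensor\id)$ as a chain retract of $(C^{n},d_{C}^{n})$, which is acyclic by hypothesis. For the inductive step, I would invoke the short exact sequence of cochain complexes
\[
0 \to C\tensor_{\fM}\sections{E^{>a}} \to C\tensor_{\fM}\sections{E} \to C\tensor_{\fM}\sections{E^{a}} \to 0,
\]
where $E^{>a}:=\bigoplus_{k>a}E^{k}$. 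The subcomplex is $\partial$-stable because both $d_{C}\tensor\id$ and $\widetilde{D}'$ preserve the upward-closed subspace $\sections{E^{>a}}$, and the quotient inherits $d_{C}\tensor\id$ since $\widetilde{D}'$ maps into the subcomplex. The quotient is acyclic by the base case and the subcomplex is acyclic by the inductive hypothesis (applied to the DG vector bundle corresponding to $E^{>a}$, whose sections form the DG sub-$\fcM$-module $\fcM\tensor_{\fM}\sections{E^{>a}}$), so the middle is acyclic via the long exact sequence.

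The main obstacle I anticipate is the bookkeeping in the first step: carefully transferring $\cQ_{\cE}$ across the isomorphism of Proposition~\ref{prop:BaseStr} and checking that the resulting perturbation $\widetilde{D}'$ retains its strictly triangular structure on $C\tensor_{\fM}\sections{E}$. Once this is in place, the inductive argument is routine.
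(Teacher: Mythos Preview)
Your proof is correct and follows essentially the same strategy as the paper: both use Proposition~\ref{prop:BaseStr} to reduce to $C\tensor_{\fM}\sections{E}$ with a strictly upper-triangular perturbation of $d_{C}\tensor\id$, then induct on the $E$-grading using that each $\sections{E^{k}}$ is projective over $\fM$. The only cosmetic difference is that the paper phrases the inductive step as a two-row double complex and invokes the Acyclic Assembly Lemma, whereas you use the equivalent short exact sequence together with the long exact sequence in cohomology.
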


\begin{proof}
We use the notations from Proposition~\ref{prop:BaseStr}.
Consider a descending filtration of graded vector bundles 
\[
\cE=F^{a}\cE \supset F^{a+1}\cE \supset \cdots \supset F^{b}\cE \supset F^{b+1}\cE=0
\]
where each $F^{p}\cE$ is defined by
\[\sections{F^{p}\cE} \cong \bigoplus_{k=p}^{b}\fcM \tensor_{\fM} \sections{E^{k}}.\] 
By Proposition~\ref{prop:BaseStr}, 
each $\sections{F^{p}\cE}$ is closed under $\cQ_{\cE}$, thus one obtains a filtration of DG $(\fcM,Q)$-modules
\[(\sections{\cE}, \cQ_{\cE})=(\sections{F^{a}\cE}, D^{a}) \supset (\sections{F^{a+1}\cE}, D^{a+1}) \supset \cdots \supset (\sections{F^{b}\cE}, D^{b}) \supset (\sections{F^{b+1}\cE}, D^{b+1})=0\]
where $D^{p}$ is the induced differential on $\sections{F^{p}\cE}$.
Note that, under the decomposition 
\[ \sections{F^{p}\cE} \cong \big( \fcM\tensor_{\fM}\sections{E^{p}} \big) \oplus \sections{F^{p+1}\cE},\]
 the differential $D^{p}$ decomposes as
\[
D^{p}=  
\begin{bmatrix} 
Q\tensor \id & 0\\
\sum\limits_{i=p+1}^{b}d_{ip} & D^{p+1} 
\end{bmatrix} .
\]
Then the cochain complex $(C\tensor_{\fcM} \sections{F^{p}\cE}, d_{C}\tensor \id + \id \tensor D^{p})$ 
obtained by tensoring $(C,d_{C})$ over $(\fcM,Q)$
is the total complex of the following double complex
\begin{equation}\label{eq:KeyProp1}
\begin{tikzcd}
\cdots\arrow{r} &  \big(C\tensor_{\fcM}\sections{F^{p+1}\cE}\big)^{q+p} \arrow{rrr}{d_{C}\tensor \id + \id \tensor D^{p+1}} &&&  \big(C\tensor_{\fcM}\sections{F^{p+1}\cE}\big)^{q+p+1} \arrow{r} & \cdots \\
\cdots \arrow{r} & \arrow{u}{\sum\limits_{i=p+1}^{b}d_{ip}} C^{q-1}\tensor_{\fM} \sections{E^{p}}  \arrow{rrr}{d_{C}\tensor \id} &&& \arrow{u}{\sum\limits_{i=p+1}^{b}d_{ip}}   C^{q} \tensor_{\fM} \sections{E^{p}} \arrow{r}& \cdots 
\end{tikzcd}
\end{equation}
where $C^{q}$ denotes the degree $q$ homogeneous component of $C$, and similarly, $(C\tensor_{\fcM}\sections{F^{p+1}\cE})^{q+p}$ denotes the degree $q+p$ homogeneous component of $C\tensor_{\fcM}\sections{F^{p+1}\cE}$.

Observe that $C\tensor_{\fcM} \sections{F^{b+1}\cE}=0$ is acyclic and that
\[(C\tensor_{\fcM}\sections{\cE}, d_{C}\tensor \id + \id \tensor \cQ_{\cE}) = (C\tensor_{\fcM} \sections{F^{a}\cE}, d_{C}\tensor \id + \id \tensor D^{a}).\]
We use the induction argument on $p$, starting from $p=b+1$, to prove the lemma:
if the cochain complex $(C\tensor_{\fcM} \sections{F^{p+1}\cE}, d_{C}\tensor \id + \id \tensor D^{p+1})$ is acyclic, then so is
$(C\tensor_{\fcM} \sections{F^{p}\cE}, d_{C}\tensor \id + \id \tensor D^{p})$.

Suppose that $(C\tensor_{\fcM} \sections{F^{p+1}\cE}, d_{C}\tensor \id + \id \tensor D^{p+1})$ is acyclic. Then the top row of \eqref{eq:KeyProp1} is exact. Now, since the DG $\fM$-module $(C,d_{C})$ is acyclic and the $\fM$-module $\sections{E^{p}}$ is projective, the complex $(C\tensor_{\fM} \sections{E^{p}}, d_{C}\tensor \id)$ is acyclic. That is, the bottom row of \eqref{eq:KeyProp1} is exact. 
By Acylic Assembly Lemma~\cite[Lemma~2.7.3]{MR1269324}, the total complex of the double complex~\eqref{eq:KeyProp1} is acyclic. In other words, $(C\tensor_{\fcM} \sections{F^{p}\cE},d_{C}\tensor \id + \id \tensor D^{p})$ is acyclic. This completes the proof.
\end{proof}

\begin{corollary}\label{cor:KeyLem2}
Let $(\cE,\cQ_{\cE})$ be a DG vector bundle over a DG manifold of positive amplitude $(\cM,Q)$. Given a quasi-isomorphism of DG $(\fcM,Q)$-modules $\eta:(A,d_{A})\to (B,d_{B})$, the induced map
\[
\eta\tensor\id: (A\tensor_{\fcM}\sections{\cE}, d_{A}\tensor \id + \id \tensor \cQ_{\cE}) \to (B\tensor_{\fcM}\sections{\cE}, d_{B}\tensor \id + \id \tensor \cQ_{\cE})
\]
is also a quasi-isomorphism.
\end{corollary}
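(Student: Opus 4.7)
The plan is to reduce the corollary to Lemma~\ref{lem:KeyLem} via the mapping cone of $\eta$. Consider the mapping cone $\Cone(\eta) = A[1] \oplus B$ equipped with the standard differential
\[
d_{\Cone(\eta)}(a,b) = \bigl(-d_{A}(a),\; \eta(a) + d_{B}(b)\bigr).
\]
This inherits the structure of a DG $(\fcM,Q)$-module from $(A,d_{A})$ and $(B,d_{B})$, and the hypothesis that $\eta$ is a quasi-isomorphism is equivalent, via the long exact sequence associated with the cone, to the acyclicity of $\Cone(\eta)$.

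Next, I would verify that tensoring with $\sections{\cE}$ over $\fcM$ commutes with the mapping cone construction, i.e.\ that there is a canonical isomorphism of cochain complexes
\[
\Cone(\eta)\tensor_{\fcM}\sections{\cE} \;\cong\; \Cone\bigl(\eta\tensor \id\bigr),
\]
where both sides carry the total differential assembled from $d_{A}\tensor \id$, $d_{B}\tensor \id$, $\eta \tensor \id$, and $\id \tensor \cQ_{\cE}$. As additive functors, $(\argument)\tensor_{\fcM}\sections{\cE}$ manifestly preserves the direct sum decomposition $A[1]\oplus B$; the only point to check is that the off-diagonal block $\eta \tensor \id$ and the internal differentials $\id \tensor \cQ_{\cE}$ fit together with the correct signs, which is a routine bookkeeping.

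Finally, I apply Lemma~\ref{lem:KeyLem} with $(C,d_{C}) = (\Cone(\eta),d_{\Cone(\eta)})$: since $\Cone(\eta)$ is an acyclic DG $(\fcM,Q)$-module, the tensor product $\Cone(\eta)\tensor_{\fcM}\sections{\cE}$ is acyclic. Via the isomorphism above, this means $\Cone(\eta\tensor \id)$ is acyclic, and hence $\eta\tensor \id$ is a quasi-isomorphism, as desired. I do not anticipate any real obstacle in carrying this out: the nontrivial structural input --- which exploits the positive amplitude of $(\cM,Q)$ and the local triviality of $\cE$ --- has already been packaged into Lemma~\ref{lem:KeyLem}, so only a short formal argument remains.
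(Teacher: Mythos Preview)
Your proposal is correct and matches the paper's own proof essentially line for line: the paper also takes $(C,d_C)$ to be the mapping cone of $\eta$, notes that $C\tensor_{\fcM}\sections{\cE}$ is the mapping cone of $\eta\tensor\id$, and invokes Lemma~\ref{lem:KeyLem} to conclude. There is nothing further to add.
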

\begin{proof}
According to \cite[Corollary~1.5.4]{MR1269324}, a morphism of chain complexes is a quasi-isomorphism if and only if its mapping cone is acyclic. Let $(C,d_{C})$ be the mapping cone of $\eta$. 
Then, the cochain complex $(C\tensor_{\fcM}\sections{\cE}, d_{C}\tensor \id + \id \tensor \cQ_{\cE})$
is the mapping cone of $\eta \tensor \id$, which, by Lemma~\ref{lem:KeyLem}, is acyclic. 
This completes the proof.
\end{proof}

\begin{corollary}\label{cor:KeyLem3}
Let $(\cE,\cQ_{\cE})$ be a DG vector bundle over a DG manifold of positive amplitude $(\cM,Q)$.
Suppose that the DG algebra $(\fcM,Q)$ of smooth functions on $(\cM,Q)$ is acyclic. Then the DG $(\fcM,Q)$-module of sections $(\sections{\cE},\cQ_{\cE})$ is also acyclic.
\end{corollary}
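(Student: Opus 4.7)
The plan is to obtain the corollary as a direct specialisation of Lemma~\ref{lem:KeyLem}, taking $(C,d_{C})$ to be $(\fcM,Q)$ itself, regarded as a DG module over itself by left multiplication. Under the hypothesis, this DG module is acyclic, so Lemma~\ref{lem:KeyLem} immediately guarantees acyclicity of
\[
\bigl(\fcM \tensor_{\fcM} \sections{\cE},\, Q\tensor \id + \id \tensor \cQ_{\cE}\bigr).
\]
The only thing left is to identify this with $(\sections{\cE},\cQ_{\cE})$ as DG $(\fcM,Q)$-modules. This is done via the canonical multiplication map $f\tensor s\mapsto f\cdot s$, which is manifestly an isomorphism of graded $\fcM$-modules; the Leibniz rule $\cQ_{\cE}(f\cdot s) = Q(f)\cdot s + (-1)^{\degree{f}} f\cdot \cQ_{\cE}(s)$ shows that it intertwines the two differentials. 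Acyclicity then transfers across the isomorphism.

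There is no real obstacle here, as the corollary is essentially a tautological consequence of Lemma~\ref{lem:KeyLem}. If one prefers, the same conclusion can be reached via Corollary~\ref{cor:KeyLem2}: the zero map $0\to (\fcM,Q)$ is a quasi-isomorphism precisely when $(\fcM,Q)$ is acyclic, and tensoring over $(\fcM,Q)$ with $(\sections{\cE},\cQ_{\cE})$ preserves this quasi-isomorphism, yielding that $0 \to (\sections{\cE},\cQ_{\cE})$ is also a quasi-isomorphism.
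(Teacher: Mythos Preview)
Your proof is correct and follows exactly the paper's approach: apply Lemma~\ref{lem:KeyLem} with $(C,d_{C})=(\fcM,Q)$. You have simply spelled out the identification $\fcM\tensor_{\fcM}\sections{\cE}\cong\sections{\cE}$ that the paper leaves implicit.
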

\begin{proof}
Apply Lemma~\ref{lem:KeyLem} for $(C,d_{C})=(\fcM,Q)$.
\end{proof}

\subsection{Tangent and cotangent bundles}
Let $(\cM,Q)$ be a DG manifold, and let $\Der(\smooth{\cM})$ denote the graded $\smooth{\cM}$-module of derivations of $\smooth{\cM}$.
The \textbf{tangent bundle} of $(\cM,Q)$ is defined by the DG $(\smooth{\cM},Q)$-module whose underlying graded module is $\sections{\tangent{\cM}}:=\Der(\smooth{\cM})$, equipped with differential $\liederivative{Q}=[Q,\argument]$, given by the graded commutator with $Q$. We denote the tangent bundle by $(\tangent{\cM},\liederivative{Q})$.
The graded module $\sections{\tangent{\cM}}$ is often denoted by $\XX(\cM)$, and its elements are called vector fields on $\cM$.

Assume that $(\cM,Q)$ is of positive amplitude. By Proposition~\ref{prop:CatBundle}, we may identify $(\cM,Q)$ with a bundle of positively graded curved $L_{\infty}[1]$ algebra $(M,L,\lambda)$. That is, we have $\smooth{\cM}=\sections{SL^{\vee}}$, and $Q=\lambda^{\transpose}$. Under this identification, we describe the DG module of vector fields $(\XX(\cM),\liederivative{Q})=(\Der(\sections{SL^{\vee}}),\liederivative{\lambda^{\transpose}})$, in terms of $(M,L,\lambda)$.

The bundle projection $\pi:L\to M$ is a morphism of graded manifolds, 
hence induces an exact sequence of graded vector bundles over the graded manifold $L$:
\[\begin{tikzcd}
0\arrow{r}& \pi^{\ast}L \arrow{r}& \tangent{L} \arrow{r}{\pi_{\ast}}& \pi^{\ast}\tangent{M} \arrow{r} & 0
\end{tikzcd}\]
where $\pi_{\ast}$ is the tangent map of $\pi$. 
This induces an exact sequence of graded $\sections{SL^{\vee}}$-modules
\begin{equation}\label{eq:TangentSES}
\begin{tikzcd}
0 \arrow{r}& \sections{SL^{\vee}\tensor L} \arrow{r}& \Der(\sections{SL^{\vee}}) \arrow{r}{\pi_{\ast}} & \sections{SL^{\vee}\tensor \tangent{M}} \arrow{r}& 0 .
\end{tikzcd}
\end{equation}
Each choice of $\tangent{M}$-connection $\nabla$ on $L$ yields a splitting of the short exact sequence~\eqref{eq:TangentSES}, inducing
an isomorphism of graded $\sections{SL^{\vee}}$-modules
\begin{equation}\label{eq:SLTL}
\sections{SL^{\vee}\tensor ( \tangent{M}\oplus L)} \cong \Der(\sections{SL^{\vee}}).
\end{equation}
Indeed, 
denoting by $\widetilde{\nabla}$ the $\tangent{M}$-connection on $SL^{\vee}$ induced by $\nabla$,
the above isomorphism is characterised by
\begin{equation}\label{eq:VFBase}
\begin{gathered} 
X\mapsto \widetilde{\nabla}_{X} \in \Der(\sections{SL^{\vee}}), \qquad \forall X\in \sections{M;\tangent{M}}\\
s\mapsto \iota_{s} \in \Der(\sections{SL^{\vee}}), \qquad \forall s\in \sections{M;L}
\end{gathered}
\end{equation}
where the symbol $\iota_{s}$ denotes the interior product with $s\in \sections{L}$. 

Next, we describe the differential $\liederivative{\lambda^{\transpose}}:\Der(\sections{SL^{\vee}})\to \Der(\sections{SL^{\vee}})$ under the identification~\eqref{eq:SLTL}.
Let $L=\bigoplus_{k=1}^{b}L^{k}$ and $\tangent{M}=: L^{0}$. 
By Proposition~\ref{prop:BaseStr}, the DG $(\sections{SL^{\vee}},\lambda^{\transpose})$-module structure on 
$\sections{SL^{\vee}\tensor(\tangent{M}\oplus L)}$ 
is uniquely determined by maps
\[
d_{ij}|_{\sections{L^{j}}}: \sections{L^{j}}\to \sections{SL^{\vee}\tensor L^{i}}
\]
for $i>j\geq 0$.

By abuse notation, the $\tangent{M}$-connection on $SL$ induced by $\tangent{M}$-connection $\nabla$ on $L$ is again denoted by $\nabla$. Then, $\nabla$ can be viewed as the dual connection of $\widetilde{\nabla}$ in~\eqref{eq:VFBase}, but without the completion.
Observe that there is a natural identification
\[\sections{SL^{\vee}\tensor L^{i}} \cong \bigoplus_{n=0}^{\infty}\sections{\Hom(S^{n}L, L^{i})} \subset \sections{\Hom(SL, L^{i})}.\]
A direct computation shows that
for $i>0$ and $j>k>0$, the maps are given by
\begin{gather*}
d_{i0}(X)=\pr_{i}\circ [\lambda, \nabla_{X}] \qquad \forall X\in \XX(M),\\
d_{jk}(s)=(-1)^{k+1}\iota_{s}\lambda_{j-k} \qquad \forall s\in \sections{L^{k}},
\end{gather*}
where $\pr_{i}:\sections{SL}\to \sections{L^{i}}$ is the natural projection, and $\lambda$ is viewed as a $\smooth{M}$-coderivation $\lambda:\sections{SL}\to \sections{SL}$. Also, the bracket $[\argument,\argument]$ denotes the graded commutator.

Altogether, we have proved the following.

\begin{proposition}\label{prop:Tangent}
Let $(\cM,Q)$ be a DG manifold of positive amplitude, and let $(M,L,\lambda)$ be the corresponding bundle of positively graded curved $L_{\infty}[1]$ algebra. 
Upon a choice of $\tangent{M}$-connection $\nabla$ on $L$, there is an isomorphism of DG 
$(\smooth{\cM},Q)=(\sections{SL^{\vee}},\lambda^{\transpose})$-modules
\[(\XX(\cM),\liederivative{Q}) \cong (\sections{SL^{\vee}\tensor ( \tangent{M}\oplus L)}, D_{\lambda})\]
where the differential $D_{\lambda}=(d_{ij})_{0\leq i,j\leq b}$ with maps
\[d_{ij}:\sections{SL^{\vee}\tensor L^{j}}\to \sections{SL^{\vee}\tensor L^{i}}\]
(where $L^{0}:=\tangent{M}$ and $L:=\bigoplus_{k=1}^{b}L^{k}$) are defined as follows:
\begin{enumerate}
\item $d_{ij}=0$ \qquad  if $i<j$,
\item $d_{ii}=Q\tensor \id$ \qquad  for all $i$,
\item $d_{i0}(X) = \pr_{i}\circ [\lambda,\nabla_{X}]$  \qquad for all $i>0$ and for $X\in \XX(M)$, 	\label{item:3}
\item $d_{ij}(s) = (-1)^{j+1} \iota_{s}\lambda_{i-j}$ \qquad for $i>j>0$ and for $s\in \sections{L^{j}}$.	\label{item:4}
\end{enumerate}
Here, $\pr_{i}:\sections{SL}\to \sections{L^{i}}$ denotes the natural projection, and $\lambda$ is viewed as a $\smooth{M}$-coderivation $\lambda:\sections{SL}\to \sections{SL}$. 
\end{proposition}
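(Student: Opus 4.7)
My plan is to combine the splitting of the tangent bundle (set up in the discussion preceding the statement) with the structural result Proposition~\ref{prop:BaseStr}, and then compute each off-diagonal component of the differential by evaluating the graded commutator $[\lambda^{\transpose},\argument]$ in $\Der(\sections{SL^{\vee}})$.

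The graded $\sections{SL^{\vee}}$-module isomorphism itself needs no further argument: the connection $\nabla$ splits the short exact sequence~\eqref{eq:TangentSES} through the prescription~\eqref{eq:VFBase}, yielding~\eqref{eq:SLTL}. Applying Proposition~\ref{prop:BaseStr} to the DG vector bundle $(\tangent{\cM},\liederivative{Q})$ with $E:=\tangent{M}\oplus L$ graded by $E^{0}:=\tangent{M}$ and $E^{k}:=L^{k}$ for $k\geq 1$ immediately yields the upper-triangular shape $d_{ij}=0$ for $i<j$ together with $d_{ii}=Q\tensor \id$, and guarantees that each $d_{ij}$ with $i>j$ is $\sections{SL^{\vee}}$-linear, hence completely determined by its restriction to $\sections{L^{j}}$ (with $\sections{L^{0}}=\XX(M)$ when $j=0$). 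The remaining task is therefore to identify items~\eqref{item:3} and~\eqref{item:4}.

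For~\eqref{item:3}, given $X\in \XX(M)$, the associated vector field on $\cM$ is $\widetilde{\nabla}_{X}\in \Der(\sections{SL^{\vee}})$, and its Lie derivative by $Q=\lambda^{\transpose}$ is the graded commutator $[\lambda^{\transpose},\widetilde{\nabla}_{X}]$. Because $\widetilde{\nabla}_{X}$ is the transpose of the coderivation extension of $\nabla_{X}$ on $\sections{SL}$, this commutator is the transpose of $[\lambda,\nabla_{X}]$, and projecting onto the $\sections{L^{i}}$ component under the identification $\sections{SL^{\vee}\tensor L^{i}}\cong \bigoplus_{n\ge 0}\sections{\Hom(S^{n}L,L^{i})}$ yields $\pr_{i}\circ[\lambda,\nabla_{X}]$, as claimed.

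For~\eqref{item:4}, given $s\in\sections{L^{j}}$, the associated vector field is $\iota_{s}$, a derivation of degree $-j$ on $\sections{SL^{\vee}}$, and one evaluates $[\lambda^{\transpose},\iota_{s}]$. Dualising to $\sections{SL}$, where $\iota_{s}$ becomes multiplication by $s$ and $\lambda^{\transpose}$ becomes the coderivation $\lambda$, the resulting bracket is determined by its effect on cogenerators; after projecting to $L^{i}$ it recovers exactly $\iota_{s}\lambda_{i-j}$. I expect this to be the main technical point: the sign $(-1)^{j+1}$ must be tracked through the Koszul convention governing the commutator of the degree $-j$ operator $\iota_{s}$ with the degree $+1$ operator $\lambda^{\transpose}$, and one must check that only the $(i-j)$-ary component of $\lambda$ survives after the relevant projections. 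Once this sign bookkeeping is settled, items~\eqref{item:3} and~\eqref{item:4} follow and the proof is complete.
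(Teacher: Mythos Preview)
Your proposal is correct and follows essentially the same approach as the paper: the paper's proof (given in the paragraphs immediately preceding the proposition) likewise uses the connection-induced splitting~\eqref{eq:SLTL}, invokes Proposition~\ref{prop:BaseStr} to obtain the triangular shape and reduce the off-diagonal components to their restrictions to $\sections{L^{j}}$, and then states that ``a direct computation'' gives the formulas in items~\eqref{item:3} and~\eqref{item:4}. Your outline actually supplies more detail than the paper on how that direct computation goes, including the dualisation to coderivations on $\sections{SL}$ and the sign bookkeeping for $(-1)^{j+1}$.
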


Similarly, we consider the cotagnent bundle of a DG manifold.
Given a DG manifold $(\cM,Q)$,
the \textbf{cotangent bundle} of $(\cM,Q)$ is defined as the dual bundle of $(\tangent{\cM},\liederivative{Q})$, denoted by $(\cotangent{\cM},\liederivative{Q})$.
The corresponding DG $(\smooth{\cM},Q)$-module is $(\sections{\cotangent{\cM}},\liederivative{Q})$, the $(\smooth{\cM},Q)$-linear dual of $(\XX(\cM),\liederivative{Q})$. The graded module $\sections{\cotangent{\cM}}$ is often denoted by $\Omega(\cM)$, and its elements are called differential forms on $\cM$.

As in the tangent bundle case, we assume that $(\cM,Q)$ is of positive amplitude, and is identified with $(M,L,\lambda)$. Let $\Omega(L)$ denote the space of differential forms on $L$, seen as a graded manifold.

By taking $\sections{SL^{\vee}}$-linear dual, the exact sequence in~\eqref{eq:TangentSES} induces an exact sequence of graded $\sections{SL^{\vee}}$-modules
\begin{equation}\label{eq:CotangentSES}
\begin{tikzcd}
0 \arrow{r}& \sections{SL^{\vee}\tensor \cotangent{M}} \arrow{r}{\pi^{\star}} & \Omega(L) \arrow{r}& \sections{SL^{\vee}\tensor L^{\vee}} \arrow{r}& 0
\end{tikzcd}
\end{equation}
where $\pi^{\star}$ denotes the $\sections{SL^{\vee}}$-linear dual of $\pi_{\ast}$ in~\eqref{eq:TangentSES}. Moreover, upon a choice of $\tangent{M}$-connection on $L$, we obtain
\[\sections{SL^{\vee}\tensor (\cotangent{M}\oplus L^{\vee})}\cong \Omega(\cM).\] 
Analogously to the tangent bundle case, we obtain an analogue of Proposition~\ref{prop:Tangent} for cotangent bundle.
Note that since $L=\bigoplus_{k=1}^{b}L^{k}$, the dual bundle is $L^{\vee}=\bigoplus_{k=1}^{b}(L^{\vee})^{-k}$ where $ (L^{\vee})^{-k}$ satisfies $(L^{\vee})^{-k}\cong (L^{k})^{\vee}$.

\begin{proposition}\label{prop:Cotangent}
Let $(\cM,Q)$ be a DG manifold of positive amplitude, and let $(M,L,\lambda)$ be the corresponding bundle of positively graded curved $L_{\infty}[1]$ algebra. 
Upon a choice of $\tangent{M}$-connection $\nabla$ on $L$, there is an isomorphism of DG $(\smooth{\cM},Q)=(\sections{SL^{\vee}},\lambda^{\transpose})$-modules
\[(\Omega(\cM),\liederivative{Q}) \cong (\sections{SL^{\vee}\tensor (L^{\vee}\oplus \cotangent{M})}, D_{\lambda}^{\transpose}),\]
where each map (where $(L^{\vee})^{0}:=\cotangent{M}$)
\[{d_{ij}}^{\transpose}:\sections{SL^{\vee}\tensor (L^{\vee})^{-i}}\to \sections{SL^{\vee}\tensor(L^{\vee})^{-j}}\]
of the differential $D_{\lambda}^{\transpose}=({d_{ij}}^{\transpose})_{0\leq i,j \leq b}$ is the $\sections{SL^{\vee}}$-linear dual of $d_{ij}$ in Proposition~\ref{prop:Tangent}.
\end{proposition}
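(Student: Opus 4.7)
The plan is to deduce Proposition~\ref{prop:Cotangent} directly from Proposition~\ref{prop:Tangent} by $\sections{SL^{\vee}}$-linear dualization, since $(\cotangent{\cM},\liederivative{Q})$ is by construction the DG vector bundle dual of $(\tangent{\cM},\liederivative{Q})$. First I would verify the module-level isomorphism: applying $\Hom_{\sections{SL^{\vee}}}(\argument,\sections{SL^{\vee}})$ to the short exact sequence \eqref{eq:TangentSES} yields \eqref{eq:CotangentSES}, using that each term is locally free of finite rank and that the sequence splits. The splitting of \eqref{eq:TangentSES} determined by the $\tangent{M}$-connection $\nabla$ via \eqref{eq:VFBase} then dualizes to a splitting of \eqref{eq:CotangentSES}, producing the desired graded $\sections{SL^{\vee}}$-module isomorphism
\[
\Omega(\cM) \cong \sections{SL^{\vee}\tensor (L^{\vee}\oplus \cotangent{M})}.
\]

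Next I would transport the differential across this identification. By definition of the dual DG $(\smooth{\cM},Q)$-module, the Lie derivative on forms is characterised by
\[
\pairing{\liederivative{Q}\omega}{X} = \lambda^{\transpose}\bigl(\pairing{\omega}{X}\bigr) - (-1)^{\abs{\omega}}\pairing{\omega}{\liederivative{Q}X}
\]
for $\omega\in \Omega(\cM)$ and $X\in \XX(\cM)$. Substituting the decomposition of Proposition~\ref{prop:Tangent} into the right-hand side and reading off the components gives precisely the dual differential $D_{\lambda}^{\transpose}$: each off-diagonal $d_{ij}$ with $i>j$ is $\sections{SL^{\vee}}$-linear, so it possesses a genuine $\sections{SL^{\vee}}$-linear transpose ${d_{ij}}^{\transpose}$ landing between the stated summands, while each diagonal $d_{ii}=Q\tensor \id$ dualizes to the same expression $Q\tensor \id$ acting on $\sections{SL^{\vee}\tensor (L^{\vee})^{-i}}$ (or on $\sections{SL^{\vee}\tensor \cotangent{M}}$ when $i=0$), since $Q=\lambda^{\transpose}$ is a derivation of $\sections{SL^{\vee}}$. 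The upper-triangular shape of $D_{\lambda}$ transposes to a lower-triangular shape in the $(L^{\vee},\cotangent{M})$ decomposition, matching the claim.

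No substantive obstacle is anticipated; the argument is entirely formal and parallel to that of Proposition~\ref{prop:Tangent}. The only care required is in bookkeeping the Koszul signs coming from dualization---in particular, checking that the sign $(-1)^{\abs{\omega}}$ appearing in the definition of $\liederivative{Q}$ on forms is consistent with the sign conventions in items~\eqref{item:3} and~\eqref{item:4} of Proposition~\ref{prop:Tangent}---and in confirming that the splitting of \eqref{eq:CotangentSES} we use is indeed the $\sections{SL^{\vee}}$-linear dual of the splitting \eqref{eq:VFBase}. Once these are settled, the isomorphism asserted in Proposition~\ref{prop:Cotangent} follows immediately.
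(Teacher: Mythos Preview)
Your proposal is correct and follows essentially the same approach as the paper: the paper simply observes that taking the $\sections{SL^{\vee}}$-linear dual of~\eqref{eq:TangentSES} yields~\eqref{eq:CotangentSES}, that the connection-induced splitting dualizes, and then states the proposition as the analogue of Proposition~\ref{prop:Tangent} without further argument. Your write-up is in fact more detailed than the paper's, which leaves the verification of the transported differential and the sign bookkeeping entirely implicit.
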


\section{Tensor bundles and weak equivalences}

In this section, we prove that weak equivalences induce quasi-isomorphic tensor bundles. We begin with the tangent bundle. 
The proof for the cotangent bundle is analogous to that of tangent bundle. The case of $(p,q)$-tensors follows from these cases of the tangent and cotangent bundles.

\subsection{Case of tangent bundles}
Recall that a weak equivalence is a morphism between DG manifolds of positive amplitude that induces a bijection on classical loci and a quasi-isomorphism on the tangent complexes at each classical point.

\begin{theorem} \label{thm:main1}
Let $\Psi:(\cM,Q)\to (\cN,R)$ be a weak equivalence between DG manifolds of positive amplitude.
Then the cochain complexes of vector fields $(\XX(\cM),\liederivative{Q})$ and $(\XX(\cN),\liederivative{R})$ are quasi-isomorphic.
\end{theorem}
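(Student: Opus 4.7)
The plan is to compare the two complexes via the pullback bundle $\Psi^{\ast}\tangent{\cN}$, which carries a DG vector bundle structure over $(\cM,Q)$ with a natural differential $\liederivative{Q,R}$ built from $Q$ and $R$. I would construct the zigzag
\[
(\XX(\cM),\liederivative{Q}) \xto{\Psi_{\ast}} (\sections{\Psi^{\ast}\tangent{\cN}},\liederivative{Q,R}) \xleftarrow{I} (\XX(\cN),\liederivative{R})
\]
and argue each arrow is a quasi-isomorphism. The map $I$ is the easy half: $\sections{\Psi^{\ast}\tangent{\cN}}$ is obtained from the DG vector bundle $\tangent{\cN}$ over $(\cN,R)$ by extension of scalars along $\Psi^{\ast}$, and since $\Psi^{\ast}:(\smooth{\cN},R)\to (\smooth{\cM},Q)$ is a quasi-isomorphism (the last proposition of the Preliminaries), Corollary~\ref{cor:KeyLem2} immediately delivers that $I$ is a quasi-isomorphism.

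The substance of the proof concerns $\Psi_{\ast}$. Using Corollary~\ref{cor:Fac} and Lemma~\ref{lem:LinFib}, I would reduce without loss of generality to the case where $\Psi$ is an acyclic linear fibration. In this situation the linear component $\phi_{1}:L\to f^{\ast}E$ is degree-wise surjective, and the explicit description of vector fields in Proposition~\ref{prop:Tangent} makes it straightforward to verify that $\Psi_{\ast}$ is surjective. It therefore suffices to prove that its kernel $\mathcal{K}:=\ker\Psi_{\ast}$, a DG $(\smooth{\cM},Q)$-module, is acyclic as a cochain complex.

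Acyclicity of $\mathcal{K}$ is a local statement: by the locality principle of Lemma~\ref{lem:Locality} (adapted from~\cite{seol2024atiyah}), it suffices to produce, for each $p\in M$, an open neighbourhood $U\ni p$ on which $\mathcal{K}|_{U}$ is acyclic. For a non-classical point $p$ with $\lambda_{0}(p)\neq 0$, on a sufficiently small $U$ the section $\lambda_{0}$ is nowhere zero; a standard contraction argument then shows that $(\smooth{\cM}|_{U},Q)$ is acyclic, and Corollary~\ref{cor:KeyLem3} yields the acyclicity of $\mathcal{K}|_{U}$.

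The hard part is the classical case $p\in Z(\lambda_{0})$. Using Proposition~\ref{prop:Tangent} one reads off the fibre $\mathcal{K}|_{p}$ and finds that it is the kernel of a surjective map between the tangent complex of $(M,L,\lambda)$ at $p$ and that of $(N,E,\mu)$ at $f(p)$; because $\Psi$ is both a weak equivalence and a linear fibration, this map is a surjective quasi-isomorphism, so the fibrewise kernel complex is acyclic at $p$. The obstacle is that this fibrewise kernel does not automatically extend to an acyclic sequence of bundles on a whole neighbourhood, because the ranks of $\lambda_{1}|_{q}$ and $\phi_{1}|_{q}$ are only lower semi-continuous in $q$. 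I would overcome this via the semi-continuity construction formalised in Lemma~\ref{lem:U-Exact}, producing a neighbourhood $U$ of $p$ on which the sequence can be extended so that the associated total complex remains acyclic. Combining this with the filtration argument behind Proposition~\ref{prop:BaseStr} and the assembly result of Lemma~\ref{lem:KeyLem} then delivers the local acyclicity of $\mathcal{K}|_{U}$, completing the proof.
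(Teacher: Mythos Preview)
Your proposal is correct and follows essentially the same route as the paper: the same zigzag through $\sections{\Psi^{\ast}\tangent{\cN}}$, the same reduction to acyclic linear fibrations, the same locality reduction via Lemma~\ref{lem:Locality}, and the same dichotomy between non-classical and classical points handled by Corollary~\ref{cor:KeyLem3} and Lemma~\ref{lem:U-Exact} respectively. The only place where the paper is more specific than your sketch is the very last step: rather than invoking Lemma~\ref{lem:KeyLem} directly, the paper carries out an explicit inductive decomposition (Lemma~\ref{lem:En}) peeling off acyclic mapping-cone summands $\mathcal{F}(n)$, since the differential on $\ker\Psi_{\ast}$ involves components $\overline{d}_{ij}$ beyond those coming from the $\delta_{i}$.
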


To prove Theorem~\ref{thm:main1}, it suffices to restrict attention to acyclic linear fibrations (that is, weak equivalences which are also linear fibrations) rather than to arbitrary weak equivalences.

\begin{lemma}\label{lem:ALF}
If Theorem~\ref{thm:main1} holds for all acyclic linear fibrations $\Psi$, then it also holds for all weak equivalences $\Psi$.
\end{lemma}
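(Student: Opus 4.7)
The plan is to reduce the general weak equivalence case to the acyclic linear fibration case using the ambient structure of a category of fibrant objects. First, by Corollary~\ref{cor:Fac}, the weak equivalence $\Psi:(\cM,Q)\to (\cN,R)$ can be linked through a third DG manifold of positive amplitude $(\cP,S)$ by a span of acyclic fibrations
\[
\begin{tikzcd}
(\cM,Q) & \arrow[swap]{l}{p_{1}} (\cP,S) \arrow{r}{p_{2}} & (\cN,R).
\end{tikzcd}
\]
Next, invoking Lemma~\ref{lem:LinFib}, each $p_{k}$ factors as $p_{k}=\widetilde{p}_{k}\circ \iota_{k}$, where $\iota_{k}$ is an isomorphism and $\widetilde{p}_{k}$ is a linear fibration. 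Since any isomorphism is a weak equivalence and $p_{k}$ is a weak equivalence by hypothesis, the 2-out-of-3 axiom forces $\widetilde{p}_{k}$ to be a weak equivalence as well, so it is in fact an acyclic linear fibration.

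Applying the assumed case of Theorem~\ref{thm:main1} to each $\widetilde{p}_{k}$ yields a quasi-isomorphism (in the zigzag sense indicated in the introduction, via the intermediate complex $\sections{\widetilde{p}_{k}^{\ast}\tangent{(\argument)}}$ endowed with $\liederivative{S,Q}$ or $\liederivative{S,R}$) between the cochain complex of vector fields on the source and that on the target of $\widetilde{p}_{k}$. An isomorphism of DG manifolds of positive amplitude induces, in a trivial way, an isomorphism of DG modules of vector fields, so the analogous statement is automatic for each $\iota_{k}$. Splicing the resulting zigzags through $(\cP,S)$ then produces a zigzag of quasi-isomorphisms connecting $(\XX(\cM),\liederivative{Q})$ and $(\XX(\cN),\liederivative{R})$, which is precisely the conclusion of Theorem~\ref{thm:main1}.

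The only real content of the argument is this formal manipulation inside the category of fibrant objects; no differential-geometric computation is required. The main subtlety, rather than an obstacle, is the bookkeeping: one must interpret ``quasi-isomorphic cochain complexes'' in Theorem~\ref{thm:main1} as zigzag-quasi-isomorphic (as is natural given the span $\XX(\cM)\to \sections{\Psi^{\ast}\tangent{\cN}}\leftarrow \XX(\cN)$ used throughout), and one must confirm the 2-out-of-3 step that upgrades $\widetilde{p}_{k}$ from a mere linear fibration to an acyclic linear fibration before the hypothesis can be applied.
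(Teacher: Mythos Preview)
Your proof is correct and follows essentially the same approach as the paper: both use Corollary~\ref{cor:Fac} to replace the given weak equivalence by a span of acyclic fibrations, and Lemma~\ref{lem:LinFib} together with the 2-out-of-3 property to reduce further to acyclic linear fibrations. The only difference is the order in which these reductions are performed, and your version spells out the 2-out-of-3 step and the zigzag interpretation a bit more explicitly.
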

\begin{proof}
By Lemma~\ref{lem:LinFib}, we may assume without loss of generality that Theorem~\ref{thm:main1} holds for all acyclic fibrations, rather than acyclic linear fibrations.

By Theorem~\ref{thm:CatFib} and Corollary~\ref{cor:Fac}, if $\Psi:(\cM,Q)\to (\cN,R)$ is a weak equivalence between DG manifolds of positive amplitude, then $(\cM,Q)$ and $(\cN,R)$ are linked by a pair of acyclic fibrations. Since each acyclic fibration induces quasi-isomorphic cochain complexes of vector fields on its source and target, the cochain complexes $(\XX(\cM),\liederivative{Q})$ and $(\XX(\cN),\liederivative{R})$ are quasi-isomorphic. 
This completes the proof.
\end{proof}

\begin{remark}\label{rem:ALF} 
The proof of Lemma~\ref{lem:ALF} relies on the abstract theory of categories of fibrant objects. Following the same reasoning, one obtains analogous statements for tensor bundles, and moreover, for the Atiyah cocycles. 
\end{remark}

We now make Theorem~\ref{thm:main1} explicit by constructing the quasi-isomorphisms that relate the cochain complexes  $\XX(\cM)$ and $\XX(\cN)$.

Note that a smooth map $f:M\to N$ between smooth manifolds naturally induces a morphism of vector bundles $f_{\ast}:\tangent{M}\to f^{\ast}\tangent{N}$ over $M$, where $f^{\ast}\tangent{N}$ is the pullback of $\tangent{N}$ along $f$. 
In terms of sections, $f$ naturally induces a morphism of $\smooth{M}$-modules $f_{\ast}:\XX(M)\to \sections{f^{\ast}\tangent{N}}$. 

It is well known (see for instance \cite[Section~11.53]{MR4221224}) that there is an isomorphism of $\smooth{M}$-modules
\[\sections{f^{\ast}\tangent{N}}\cong \smooth{M}\tensor_{\smooth{N}}\XX(N)\cong \Der_{f^{\ast}}(\smooth{N},\smooth{M}),\]
where $\Der_{f^{\ast}}(\smooth{N},\smooth{M})$ consists of $\RR$-linear maps $Y:\smooth{N}\to \smooth{M}$ such that the Leibniz rule twisted by $f^{\ast}$ holds: for any $\alpha,\beta \in \smooth{N}$,
\[Y(\alpha \beta)=Y(\alpha)f^{\ast}(\beta)+f^{\ast}(\alpha)Y(\beta) .\]
Under this identification, the induced map 
\begin{equation}\label{eq:f-ast}
f_{\ast}:\XX(M)\to \sections{ f^{\ast}\tangent{N}}\cong \Der_{f^{\ast}}(\smooth{N},\smooth{M})
\end{equation}
satisfies
\[
f_{\ast}(X):\alpha \mapsto X(f^{\ast}(\alpha))
\]
for all $X\in \XX(M)$ and $\alpha\in \smooth{N}$.
Moreover, there is a natural map 
\begin{equation}\label{eq:I}
I:\XX(N) \to \sections{f^{\ast}\tangent{N}}
\end{equation}
which can be described in two equivalent ways: for all $Z\in \XX(N)$,
\begin{enumerate}
\item under the identification $\sections{ f^{\ast}\tangent{N}}\cong \smooth{M}\tensor_{\smooth{N}}\XX(N)$,
\[ I(Z)=1\tensor Z,\]
\item under the identification $\sections{ f^{\ast}\tangent{N}}\cong \Der_{f^{\ast}}(\smooth{N},\smooth{M})$,
\[ I(Z): \alpha \mapsto f^{\ast}(Z(\alpha))\]
for all $\alpha \in \smooth{N}$.
\end{enumerate}
Geometrically, the map $I$ corresponds to the assignment that sends a section $s:N\to \tangent{N}$ to a section $s\circ f : M\to f^{\ast}\tangent{N}$. 

An analogous construction holds for DG manifolds. Given a morphism $\Psi:(\cM,Q)\to (\cN,R)$ of DG manifolds, there is an isomorphism of DG $(\smooth{\cM},Q)$-modules (see \cite[Lemma~3.5]{MR3320307} for the corresponding result in the setting of supermanifolds)
\begin{equation}\label{eq:DerTen}
 (\smooth{\cM}\tensor_{\smooth{\cN}}\XX(\cN), Q\tensor \id + \id \tensor \liederivative{R}) \cong (\Der_{\Psi^{\ast}}(\smooth{\cN},\smooth{\cM}), \liederivative{Q,R})
\end{equation}
where $\liederivative{Q,R}(Y):=Q\circ Y - (-1)^{\degree{Y}}Y\circ R$ for $Y\in \Der_{\Psi^{\ast}}(\smooth{\cN},\smooth{\cM})$. 
The underlying graded $\smooth{\cM}$-modules in Eq.~\eqref{eq:DerTen} can be naturally identified with the graded module of sections $\sections{\Psi^{\ast}\tangent{\cN}}$. 
Thus, we denote by $(\sections{\Psi^{\ast}\tangent{\cN}},\liederivative{Q,R})$ the DG $(\smooth{\cM},Q)$-modules in Eq.~\eqref{eq:DerTen}.
Using this notation, we obtain a pair of morphisms of DG $(\smooth{\cM},Q)$-modules
\begin{equation}\label{eq:PairMorph}
\begin{tikzcd}
(\XX(\cM),\liederivative{Q}) \arrow{r}{\Psi_{\ast}} &(\sections{\Psi^{\ast}\tangent{\cN}},\liederivative{Q,R})& (\XX(\cN), \liederivative{R}) \arrow{l}[swap]{I}
\end{tikzcd}
\end{equation}
where $\Psi_{\ast}$ is defined as in~\eqref{eq:f-ast}, with $\Psi$ in place of $f$, and $I$ is defined as in~\eqref{eq:I}.

Theorem~\ref{thm:main1} is a consequence of Lemma~\ref{lem:ALF} and the following theorem.
\begin{theorem}\label{thm:main1-1}
If $\Psi:(\cM,Q)\to (\cN,R)$ is an acyclic linear fibration between DG manifolds of positive amplitude, then the pair of morphisms of DG $(\smooth{\cM},Q)$-modules in~\eqref{eq:PairMorph} are both quasi-isomorphisms.
\end{theorem}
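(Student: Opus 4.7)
The plan is to prove the two quasi-isomorphism claims separately. The map $I$ falls squarely within the general framework of Section~3, while $\Psi_{\ast}$ requires exploiting the explicit structure of linear fibrations together with a local analysis that distinguishes classical and non-classical points.

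For $I$: under the identification~\eqref{eq:DerTen}, write $(\sections{\Psi^{\ast}\tangent{\cN}}, \liederivative{Q,R})\cong (\smooth{\cM}\tensor_{\smooth{\cN}}\XX(\cN), Q\tensor\id + \id\tensor\liederivative{R})$, so that $I$ becomes the map $Z\mapsto 1\tensor Z$ induced by the DG algebra morphism $\Psi^{\ast}:(\smooth{\cN},R)\to(\smooth{\cM},Q)$. Since $\Psi$ is a weak equivalence, $\Psi^{\ast}$ is a quasi-isomorphism of DG $(\smooth{\cN},R)$-modules. Regarding $(\tangent{\cN},\liederivative{R})$ as a DG vector bundle over $(\cN,R)$, Corollary~\ref{cor:KeyLem2}---applied with $(\cN,R)$ in place of $(\cM,Q)$, $\eta=\Psi^{\ast}$, and $(\cE,\cQ_{\cE})=(\tangent{\cN},\liederivative{R})$---immediately gives that $I$ is a quasi-isomorphism.

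For $\Psi_{\ast}$: identify $(\cM,Q)\leftrightarrow(M,L,\lambda)$ and $(\cN,R)\leftrightarrow(N,E,\mu)$ via Proposition~\ref{prop:CatBundle}, and use Proposition~\ref{prop:Tangent} after choosing $\tangent{M}$- and $\tangent{N}$-connections on $L$ and $E$. Under these presentations, $\Psi_{\ast}$ is the $\smooth{\cM}$-linear extension of the graded bundle map $f_{\ast}\oplus\phi_{1}:\tangent{M}\oplus L\to f^{\ast}(\tangent{N}\oplus E)$. Since $\Psi$ is a linear fibration, $f$ is a submersion and $\phi_{1}$ is degree-wise surjective, so $f_{\ast}\oplus\phi_{1}$ is surjective with kernel a graded vector subbundle $K\subset\tangent{M}\oplus L$. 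Consequently $\Psi_{\ast}$ is surjective, and $\ker\Psi_{\ast}\cong\sections{SL^{\vee}\tensor K}$ inherits a differential from $D_{\lambda}$ of the form considered in Proposition~\ref{prop:BaseStr}, so it is the DG $(\smooth{\cM},Q)$-module of sections of a DG subbundle of $(\tangent{\cM},\liederivative{Q})$.

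The remaining and principal task is to prove $(\ker\Psi_{\ast},\liederivative{Q})$ is acyclic. By a partition-of-unity argument on $M$ (Lemma~\ref{lem:Locality}), this reduces to local acyclicity on a neighbourhood $U$ of each $p\in M$. If $p$ is non-classical then $\lambda_{0}(p)\neq 0$, and on a small $U$ the DG algebra $(\smooth{\cM}|_{U},Q|_{U})$ is acyclic (a Koszul-type contraction using $\lambda_{0}$), so Corollary~\ref{cor:KeyLem3} yields acyclicity of $\ker\Psi_{\ast}|_{U}$. If $p$ is classical, the fibre $K|_{p}$ with its induced differential is the kernel of the surjective quasi-isomorphism of tangent complexes at $p$ and $f(p)$, hence acyclic; extending this fibrewise acyclicity to a neighbourhood via lower semi-continuity of rank (Lemma~\ref{lem:U-Exact}) and then lifting through the $SL^{\vee}$-grading by a filtration argument modelled on Lemma~\ref{lem:KeyLem} completes the argument. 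The main obstacle is this last classical-point case: the relevant short exact sequence of subbundles is not canonically given and must be constructed by hand using semi-continuity of rank, and the filtration argument must carefully reconcile the resulting local acyclicity with the off-diagonal terms of $D_{\lambda}$ described in Proposition~\ref{prop:Tangent}.
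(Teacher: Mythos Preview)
Your outline matches the paper's proof: Proposition~\ref{prop:IMap} handles $I$ exactly as you describe, and Proposition~\ref{prop:PsiMap} establishes the $\Psi_{\ast}$ case via the same reduction to acyclicity of the kernel, localisation via Lemma~\ref{lem:Locality}, and the classical/non-classical dichotomy.

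Two places are looser than the paper and deserve care. First, for arbitrary connections on $L$ and $E$ the induced map $\Phi_{\ast}$ is \emph{not} the $\sections{SL^{\vee}}$-linear extension of $f_{\ast}\oplus\phi_{1}$: there is an off-diagonal correction $\gamma$ (Lemma~\ref{lem:PhiStar}), and one must choose the connection on $L$ compatibly with a splitting $L\cong K\oplus f^{\ast}E$ as in~\eqref{eq:NablaL} to force $\gamma=0$ (Corollary~\ref{cor:Ker1}); only then does the kernel take the clean form $\sections{SL^{\vee}\tensor(\ker f_{\ast}\oplus\ker\phi_{1})}$. Second, the classical-point step is not a filtration ``modelled on Lemma~\ref{lem:KeyLem}'' (which concerns tensoring with an acyclic module) but the inductive construction of Lemma~\ref{lem:En}: one defines intermediate DG modules $(\cE(n),\overline{D(n)})$ and explicit automorphisms that peel off acyclic mapping-cone summands $\mathcal{F}(n)$, reconciling the exact sequence~\eqref{eq:DeltaSES} of Lemma~\ref{lem:U-Exact}---whose maps $\delta_{i}$ differ from $\overline{\lambda}_{1}$ by the corrections $\eta_{i+1}\circ(\overline{\lambda}_{1})^{2}$---with the full off-diagonal structure $\overline{d}_{ij}$ of the differential. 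Your closing sentence correctly anticipates that this is where the real work lies.
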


In order to prove Theorem~\ref{thm:main1-1}, it suffices to show that both $I$ and $\Psi_{\ast}$ are quasi-isomorphisms.
In fact, it follows easily from Corollary~\ref{cor:KeyLem2} that the map $I$ is a quasi-isomorphism.
\begin{proposition}\label{prop:IMap}
Under the hypothesis of Theorem~\ref{thm:main1-1}, 
the morphism of DG $(\smooth{\cM},Q)$-modules
\[
I:(\XX(\cN),\liederivative{R}) \to (\sections{\Psi^{\ast}\tangent{\cN}},\liederivative{Q,R})
\]
in~\eqref{eq:PairMorph} is a quasi-isomorphism.
\end{proposition}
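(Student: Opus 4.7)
The plan is to realise $I$ as the map $\Psi^{\ast}\tensor \id$ arising from base change along the quasi-isomorphism $\Psi^{\ast}:(\smooth{\cN},R)\to (\smooth{\cM},Q)$, and then apply Corollary~\ref{cor:KeyLem2}. The DG vector bundle to feed into the corollary is the tangent bundle $(\tangent{\cN},\liederivative{R})$ over the DG manifold of positive amplitude $(\cN,R)$, which takes the roles of $(\cE,\cQ_{\cE})$ and $(\cM,Q)$ in the statement of the corollary.

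First, I would note that $\Psi^{\ast}$ is a quasi-isomorphism: since $\Psi$ is an acyclic linear fibration it is, in particular, a weak equivalence, so the proposition cited at the end of Section~1 (following \cite{arXiv:2307.08179}) applies. Regarding $(\smooth{\cM},Q)$ as a DG $(\smooth{\cN},R)$-module via $\Psi^{\ast}$ makes the morphism $\Psi^{\ast}:(\smooth{\cN},R)\to (\smooth{\cM},Q)$ a quasi-isomorphism of DG $(\smooth{\cN},R)$-modules.

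Next, invoking Corollary~\ref{cor:KeyLem2} with $A=\smooth{\cN}$, $B=\smooth{\cM}$, $\eta=\Psi^{\ast}$, and $(\cE,\cQ_{\cE})=(\tangent{\cN},\liederivative{R})$ over $(\cN,R)$, the induced map
\[
\Psi^{\ast}\tensor \id : \bigl(\smooth{\cN}\tensor_{\smooth{\cN}}\XX(\cN), R\tensor \id+\id\tensor \liederivative{R}\bigr) \to \bigl(\smooth{\cM}\tensor_{\smooth{\cN}}\XX(\cN), Q\tensor \id+\id\tensor \liederivative{R}\bigr)
\]
is a quasi-isomorphism of DG $(\smooth{\cN},R)$-modules, and a fortiori of DG $(\smooth{\cM},Q)$-modules.

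Finally, I would identify this map with $I$. The source is canonically isomorphic to $(\XX(\cN),\liederivative{R})$ via $1\tensor Z \mapsto Z$, while the target is identified with $(\sections{\Psi^{\ast}\tangent{\cN}},\liederivative{Q,R})$ by Eq.~\eqref{eq:DerTen}. Under these identifications $\Psi^{\ast}\tensor \id$ sends $Z\mapsto 1\tensor Z$, which is precisely the description of $I$ given in~\eqref{eq:I}. Hence $I$ is a quasi-isomorphism. There is no real obstacle here beyond making sure the identifications of source and target line up cleanly; the positive amplitude assumption on $(\cN,R)$ is what legitimises the use of Corollary~\ref{cor:KeyLem2}, and the linear-fibration hypothesis on $\Psi$ plays no direct role in this particular proposition (it will be needed separately for surjectivity of $\Psi_{\ast}$).
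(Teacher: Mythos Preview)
Your proof is correct and follows essentially the same approach as the paper: both apply Corollary~\ref{cor:KeyLem2} to the quasi-isomorphism $\Psi^{\ast}:(\smooth{\cN},R)\to(\smooth{\cM},Q)$ and the DG vector bundle $(\tangent{\cN},\liederivative{R})$, then identify the resulting map with $I$. Your observation that only the weak-equivalence part of the hypothesis is used here is also noted in the paper immediately after the proof.
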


\begin{proof}
Since $\Psi:(\cM,Q)\to (\cN,R)$ is a weak equivalence between DG manifolds of positive amplitude,
the induced map $\eta:=\Psi^{\ast}:(\smooth{\cN},R) \to (\smooth{\cM}, Q)$
is a quasi-isomorphism of DG $(\smooth{\cN},R)$-modules. 
Applying Corollary~\ref{cor:KeyLem2} to the DG vector bundle $(\tangent{\cN}, Q_{\tangent{\cN}})$, we obtain a quasi-isomorphism
\[ (\XX(\cN),\liederivative{R}) \to (\smooth{\cM}\tensor_{\smooth{\cN}}\XX(\cN), Q\tensor 1 + 1\tensor \liederivative{R}) \cong (\sections{\Psi^{\ast}\tangent{\cN}},\liederivative{Q,R})\]
which agrees with the map $I$. This completes the proof.
\end{proof}
\begin{remark}
As shown in the proof of Proposition~\ref{prop:IMap}, we only require $\Psi$ to be a weak equivalence.
\end{remark}

Next, we turn to the second map $\Psi_{\ast}$. Its proof is more technical and is postponed to the following subsection.
\begin{proposition}\label{prop:PsiMap}
Under the hypothesis of Theorem~\ref{thm:main1-1}, 
the morphism of DG $(\smooth{\cM},Q)$-modules 
\[
\Psi_{\ast}:(\XX(\cM),\liederivative{Q}) \to (\sections{\Psi^{\ast}\tangent{\cN}},\liederivative{Q,R})
\]
in~\eqref{eq:PairMorph} is a quasi-isomorphism.
\end{proposition}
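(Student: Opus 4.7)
The plan is to carry out the two reductions sketched in the introduction: first, the surjectivity of $\Psi_{\ast}$ coming from the linear fibration hypothesis lets one replace the claim by the acyclicity of $K:=\ker\Psi_{\ast}$; second, a sheaf-theoretic locality argument reduces this to local acyclicity on $M$, where one treats classical and non-classical points separately.

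Identifying $(\cM,Q)$ with $(M,L,\lambda)$ and $(\cN,R)$ with $(N,E,\mu)$, a linear fibration $\Psi$ is given by a submersion $f:M\to N$ together with a degree-wise surjective bundle map $\phi_{1}:L\to f^{\ast}E$ intertwining $\lambda$ and $\mu$. Applying Proposition~\ref{prop:Tangent} on both sides, the underlying graded module map $\Psi_{\ast}$ becomes, up to natural identifications, $\id_{SL^{\vee}}\otimes(f_{\ast}\oplus\phi_{1})$; in particular it is surjective and its kernel $K$ is a DG $(\smooth{\cM},Q)$-submodule of $\XX(\cM)$ with underlying graded module $\sections{SL^{\vee}\otimes(\ker f_{\ast}\oplus\ker\phi_{1})}$. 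Hence the conclusion reduces to showing that $(K,\liederivative{Q})$ is acyclic. Using the sheaf description of DG vector bundles (Remark~\ref{rem:Sheaf}) together with a partition-of-unity argument (Lemma~\ref{lem:Locality}), it then suffices to find, for each $p\in M$, an open neighbourhood $U$ of $p$ on which $K|_{U}$ is acyclic.

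For a non-classical point $p$, where $\lambda_{0}(p)\neq 0$, I would shrink $U$ so that $\lambda_{0}$ remains nowhere vanishing on $U$ and choose $\xi\in\sections{U;(L^{1})^{\vee}}$ with $\pairing{\xi}{\lambda_{0}}=1$. Then $Q\xi=1$ in $\smooth{\cM|_{U}}$ and interior product with $\xi$ supplies a contracting homotopy for $(\smooth{\cM|_{U}},Q)$. Since $K|_{U}$ is a DG vector bundle over $(\cM|_{U},Q)$, Corollary~\ref{cor:KeyLem3} applies and delivers the required acyclicity.

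The classical case $\lambda_{0}(p)=0$ is the principal obstacle. There, the fibre $K|_{p}$ fits into a short exact sequence of tangent complexes induced by $\Psi$, and the weak equivalence hypothesis makes $K|_{p}$ an acyclic finite complex of vector spaces. Since the ranks of $D\lambda_{0}$, $\lambda_{1}$, $f_{\ast}$, $\phi_{1}$ need not be locally constant, this pointwise acyclicity does not globalise canonically; this is the step I expect to require the most care. I would invoke Lemma~\ref{lem:U-Exact}, which uses lower semi-continuity of rank to produce, after shrinking $U$, a finite exact sequence of vector bundles on $U$ realising $\ker f_{\ast}\oplus\ker\phi_{1}$ as an iterated extension by acyclic pieces built from the linear component of the $L_{\infty}[1]$ structure. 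A filtration argument in the spirit of the proof of Lemma~\ref{lem:KeyLem} (filtering by $SL^{\vee}$-degree so that the associated graded retains only the degree-preserving part of $\liederivative{Q}$, which recovers precisely the tangent-complex differential restricted to the kernels), combined with the Acyclic Assembly Lemma applied stage by stage, then transfers this bundle-level acyclicity to the full DG module $K|_{U}$. Combining the two cases through the locality principle completes the proof.
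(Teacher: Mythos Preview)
Your overall architecture matches the paper exactly: reduce to acyclicity of $\ker\Psi_{\ast}$, localise via Lemma~\ref{lem:Locality}, and split into the classical and non-classical cases. The reduction steps and the non-classical case are correct.

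The gap is at the classical-point step. The filtration you describe cannot do what you claim. Filtering by $SL^{\vee}$-degree does not isolate the ``degree-preserving part'' of $\liederivative{Q}$ on the associated graded: the curvature component $\lambda_{0}^{\top}$ of $Q=\lambda^{\top}$ \emph{lowers} symmetric-power degree while the higher $\lambda_{n}^{\top}$ raise it, so no one-sided filtration by polynomial degree picks out $\lambda_{1}^{\top}$ alone. More seriously, on an open neighbourhood $U$ of a classical point the maps $\overline{\lambda}_{1}:K^{j}|_{U}\to K^{j+1}|_{U}$ do not square to zero---the relation $\lambda_{1}^{2}(x)=-\lambda_{2}(\lambda_{0},x)$ forces this only where $\lambda_{0}$ vanishes---so there is no ``tangent-complex differential on $U$'' available to appear on any page of a spectral sequence. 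This is exactly why Lemma~\ref{lem:U-Exact} replaces $\overline{\lambda}_{1}$ by the corrected maps $\delta_{i}=\overline{\lambda}_{1}-\eta_{i+1}\circ(\overline{\lambda}_{1})^{2}$; but these $\delta_{i}$ are not what any filtration of $\overline{D}$ produces on an associated graded.

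The paper's actual argument here (Lemma~\ref{lem:En}) is not a filtration or spectral-sequence computation but an explicit inductive splitting. Using the $\delta_{i}$ and $\eta_{i}$ from Lemma~\ref{lem:U-Exact}, one defines auxiliary DG modules $\cE(n)$ with $\cE(b)=\ker\Phi_{\ast}|_{U}$, and exhibits an automorphism $\alpha=\id-\sum_{j}\eta_{n-1}\circ\overline{d}_{nj}$ that conjugates $\overline{D(n)}$ into block form, yielding $\cE(n)\cong\cE(n-1)\oplus\cF(n)$ with $\cF(n)$ a mapping cone of an identity. The verification that $\alpha$ achieves this uses the off-diagonal components of $\overline{D}$ from Corollary~\ref{cor:KerPhi} together with the identities~\eqref{eq:Etas}, and is the technical core that your sketch elides.
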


\begin{remark}\label{rem:PsiW}
Although the argument differs from Lemma~\ref{lem:ALF}, Proposition~\ref{prop:PsiMap} can also be extended from acyclic linear fibrations to all weak equivalences. The key steps are to first handle weak equivalences that are right inverses of acyclic fibrations, using Corollary~\ref{cor:KeyLem2}, and then extend to general weak equivalences via Lemma~\ref{lem:Fac}; in both steps, the behaviour of the induced map on vector fields under composition is essential.
\end{remark}

\subsection{Proof of Proposition~\ref{prop:PsiMap}}

By Proposition~\ref{prop:CatBundle}, the statement in Proposition~\ref{prop:PsiMap} can be reformulated in terms of bundles of positively graded curved $L_{\infty}[1]$ algebras. 

First, we reformulate $\Psi_{\ast}$ in terms of bundles of positively graded curved $L_{\infty}[1]$ algebras. For the following lemma, we consider linear morphisms.
\begin{lemma}\label{lem:PhiStar}
Let $\Phi=(f,\phi_{1}):(M,L,\lambda) \to (N,E,\mu)$, with $f:M\to N$ and $\phi_{1}:L\to E$, be a linear morphism in the category $\cLinfCat$, corresponding to a linear morphism $\Psi:(\cM,Q)\to (\cN,R)$ in the category $\DGMfdCat$. Given a $\tangent{M}$-connection on $L$ and a $\tangent{N}$-connection on $E$, the induced map $\Phi_{\ast}$, which corresponds to $\Psi_{\ast}$, is a DG $(\sections{SL^{\vee}}, \lambda^{\transpose})$-module map
\[
\Phi_{\ast}=
\begin{bmatrix}
f_{\ast}  & 0 \\
\gamma &  \phi_{1}
\end{bmatrix}
:  (\sections{SL^{\vee}\tensor (\tangent{M}\oplus L)}, D_{\lambda}) \to (\sections{SL^{\vee} \tensor (f^{\ast}\tangent{N}\oplus f^{\ast}E)}, D_{\mu})
\]
where the differentials $D_{\lambda}$ and $D_{\mu}$ are as in Proposition~\ref{prop:Tangent}, and the map
\[\gamma: \sections{SL^{\vee}\tensor \tangent{M}} \to \sections{SL^{\vee}\tensor f^{\ast}E}\]
is an $\sections{SL^{\vee}}$-linear map that depends on the choice of connections.
\end{lemma}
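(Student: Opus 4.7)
The plan is to unfold the definition of $\Psi_{\ast}$ under the connection-dependent isomorphisms of Proposition~\ref{prop:Tangent} applied to both $(M,L,\lambda)$ and $(N,E,\mu)$, evaluate on the natural classes of generators, and read off the matrix structure. Concretely, using~\eqref{eq:DerTen} we have $\Psi_{\ast}(Y)(\alpha)=Y(\Psi^{\ast}\alpha)$ for $Y\in\XX(\cM)$ and $\alpha\in\smooth{\cN}$; the target $\sections{\Psi^{\ast}\tangent{\cN}}\cong \sections{SL^{\vee}\tensor(f^{\ast}\tangent{N}\oplus f^{\ast}E)}$ is obtained by extending scalars along $f^{\ast}:\smooth{N}\to\smooth{M}$ from the connection-based splitting of $\XX(\cN)$. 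Since $\Phi$ is linear, the pullback $\Psi^{\ast}:\sections{SE^{\vee}}\to\sections{SL^{\vee}}$ is the graded algebra map generated by $f^{\ast}$ on $\smooth{N}$ and by the $\smooth{N}$-linear dual of $\phi_{1}$ on generators in $\sections{E^{\vee}}$.

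By the isomorphism~\eqref{eq:SLTL} and $\sections{SL^{\vee}}$-linearity, it suffices to compute $\Phi_{\ast}$ on derivations of the form $\widetilde{\nabla}_{X}$ with $X\in\XX(M)$ and $\iota_{s}$ with $s\in\sections{L}$. For $s\in\sections{L}$, the derivation $\iota_{s}$ vanishes on $\smooth{M}\supset f^{\ast}\smooth{N}$, so the $f^{\ast}\tangent{N}$-component of $\Phi_{\ast}(\iota_{s})$ is zero, giving the upper-right block equal to $0$; evaluating on $\alpha\in\sections{E^{\vee}}$ yields $\iota_{s}(\Psi^{\ast}\alpha)=\pairing{\phi_{1}(s)}{\alpha}$, so $\Phi_{\ast}(\iota_{s})$ corresponds to $\iota_{\phi_{1}(s)}$, giving the lower-right block equal to $\phi_{1}$. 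For $X\in\XX(M)$, the derivation $\widetilde{\nabla}_{X}$ restricted to $f^{\ast}\smooth{N}\subset\smooth{\cM}$ equals $X\circ f^{\ast}$, which is precisely $f_{\ast}(X)\in\sections{f^{\ast}\tangent{N}}$ by~\eqref{eq:f-ast}, giving the upper-left block equal to $f_{\ast}$. The remaining $f^{\ast}E$-component is obtained by evaluating on $\alpha\in\sections{E^{\vee}}$, producing $\widetilde{\nabla}_{X}(\Psi^{\ast}\alpha)\in\sections{SL^{\vee}}$; this defines $\gamma(X)$ and manifestly depends on the chosen connections.

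The $\sections{SL^{\vee}}$-linearity of $\gamma$ follows because $\Phi_{\ast}$ itself and the three other blocks we have identified are already $\sections{SL^{\vee}}$-linear on the relevant pieces, so their off-diagonal defect is too. Compatibility of the resulting matrix with $D_{\lambda}$ and $D_{\mu}$ is automatic, since $\Phi_{\ast}$ is induced by a morphism of DG manifolds and hence commutes with $\liederivative{Q}$ and $\liederivative{Q,R}$, which correspond to $D_{\lambda}$ and $D_{\mu}$ under the identifications of Proposition~\ref{prop:Tangent}. The one mild obstacle---essentially the only point requiring care---is bookkeeping: the connection $\nabla$ enters the splitting of $\XX(\cM)$, the chosen connection on $E$ enters the splitting of $\sections{\Psi^{\ast}\tangent{\cN}}$ via pullback along $f^{\ast}$, and $\gamma$ encodes the discrepancy between $\widetilde{\nabla}$ and the pullback along $\phi_{1}$ of the chosen connection on $E$. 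Once these conventions are pinned down, the lemma reduces to the definition-chase above.
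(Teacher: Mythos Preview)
Your approach is essentially the paper's: evaluate $\Psi_{\ast}$ on the generators $\widetilde{\nabla}_{X}$ and $\iota_{s}$ via the formula $\Psi_{\ast}(Y)(\alpha)=Y(\Psi^{\ast}\alpha)$ and read off the block structure from the connection-dependent splittings on source and target. The paper carries this out with the same computations, additionally verifying well-definedness of the target identification and writing down $\gamma$ explicitly.

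One imprecision worth fixing: evaluating $\Psi_{\ast}(\widetilde{\nabla}_{X})$ on $\alpha\in\sections{E^{\vee}}$ does \emph{not} by itself give $\gamma(X)$, because in the target splitting the $f^{\ast}\tangent{N}$-component $f_{\ast}X$ also acts nontrivially on $\sections{E^{\vee}}$ (via $\widetilde{\nabla}^{E}$). The $f^{\ast}E$-component $\gamma(X)$ is the difference $\xi\mapsto \widetilde{\nabla}^{L}_{X}(\phi_{1}^{\ast}\xi)-\sum_{i}\alpha_{i}\cdot\phi_{1}^{\ast}(\widetilde{\nabla}^{E}_{Y_{i}}\xi)$ where $f_{\ast}X=\sum_{i}\alpha_{i}\otimes Y_{i}$, which is exactly the formula the paper records. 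Your last paragraph shows you have the correct picture (``discrepancy between $\widetilde{\nabla}$ and the pullback along $\phi_{1}$ of the chosen connection on $E$''), but the sentence ``producing $\widetilde{\nabla}_{X}(\Psi^{\ast}\alpha)$; this defines $\gamma(X)$'' is not literally correct.
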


\begin{proof}
Although the statement largely follows from the corresponding morphism $\Psi_{\ast}$ in $\DGMfdCat$ and Proposition~\ref{prop:Tangent}, the explicit formula for $\Phi_{\ast}$ requires verification.

Note that $\sections{SE^{\vee}}=\sections{N;SE^{\vee}}$ consists of sections from $N$, and that $\sections{SL^{\vee}}=\sections{M;SL^{\vee}}$ consists of sections from $M$.
Also, the graded algebra homomorphism $\Phi^{\ast}:\sections{SE^{\vee}} \to \sections{SL^{\vee}}$ corresponding to 
$\Psi^{\ast}:\smooth{\cN}\to \smooth{\cM}$ is characterised by the maps 
\[ f^{\ast}:\smooth{N}\to \smooth{M}, \qquad \phi_{1}^{\ast}:\sections{E^{\vee}}\to \sections{L^{\vee}},\]
induced by $f:M\to N$ and $\phi_{1}:L\to E$, often viewed as a bundle map $\phi_{1}:L\to f^{\ast}E$ over $M$.

By the isomorphisms~\eqref{eq:SLTL} and~\eqref{eq:DerTen}, a choice of $\tangent{N}$-connection $\nabla^{E}$ on $E$ induces graded $\sections{SL^{\vee}}$-linear isomorphisms
\[ \sections{SL^{\vee}\tensor (f^{\ast}\tangent{N} \oplus f^{\ast}E)} \cong  \sections{SL^{\vee}}\tensor_{\sections{SE^{\vee}}} \sections{SE^{\vee}\tensor(\tangent{N}\oplus E)} \cong \Der_{\Phi^{\ast}}(\sections{SE^{\vee}}, \sections{SL^{\vee}})\]
whose composition is defined via formulas in~\eqref{eq:VFBase}.

More concretely, for $\widetilde{Y}\in \sections{f^{\ast}\tangent{N}}$ and $\widetilde{e}\in \sections{f^{\ast}E}$ arising from $Y\in \XX(N)$ and $e\in \sections{E}$, respectively, the map is characterised by
\[ \widetilde{Y} \mapsto \begin{cases}
\beta \mapsto \Phi^{\ast}(Y(\beta)) \\
\xi \mapsto \Phi^{\ast}(\widetilde{\nabla}^{E}_{Y}\xi)
\end{cases}
, \qquad \widetilde{e} \mapsto 
\begin{cases}
\beta \mapsto 0 \\
 \xi \mapsto \Phi^{\ast}(\iota_{e}\xi)
 \end{cases},
  \qquad 
  \forall \beta \in \smooth{N}, \forall \xi \in \sections{E^{\vee}},\]
where $\widetilde{\nabla}^{E}$ is the induced $\tangent{N}$-connection on $SE^{\vee}$. 
Note that this map is well-defined.
To see this, assume that $\widetilde{Y}=0$. By Eq.~\eqref{eq:I}, we have $\Phi^{\ast}(Y(\beta))=f^{\ast}(Y(\beta))=0$ for all $\beta\in \smooth{N}$. Moreover, if $\{\partial_{1},\ldots,\partial_{n}\}$ is a local frame of $\XX(N)$, we can write $Y=\sum_{i}\beta_{i}\partial_{i}$
with $\beta_{i}\in \smooth{N}$ satisfying $f^{\ast}(\beta_{i})=0$ for all $i$. This shows that $\Phi^{\ast}(\widetilde{\nabla}^{E}_{Y}\xi)=0$ for all $\xi\in \sections{E^{\vee}}$.
For the second assignment, using the identification
\[\sections{f^{\ast}E}\cong \smooth{M} \tensor_{\smooth{N}}\Hom_{\smooth{N}}(\sections{E^{\vee}},\smooth{N}),\]
the condition $\widetilde{e}=0$ implies that $\Phi^{\ast}(\iota_{e}\xi)=f^{\ast}(\iota_{e}\xi) = 0$ for all $\xi \in \sections{E^{\vee}}$.

Given $\tangent{M}$-connections $\nabla^{E}$ and $\nabla^{L}$ on $E$ and $L$, respectively,
we have the following commutative diagram
\[
\begin{tikzcd}
\sections{SL^{\vee}\tensor (\tangent{M}\oplus L)} \arrow{rr}{\Phi_{\ast}} \arrow[no head]{d}{\rotatebox{90}{$\sim$}}&& \sections{SL^{\vee} \tensor (f^{\ast}\tangent{N}\oplus f^{\ast}E)} \arrow[no head]{d}{\rotatebox{90}{$\sim$}}\\
\Der(\sections{SL^{\vee}}) \arrow{rr}{\widetilde{\Phi}_{\ast}} && \Der_{\Phi^{\ast}}(\sections{SE^{\vee}}, \sections{SL^{\vee}})
\end{tikzcd}
\]
where $\widetilde{\Phi}_{\ast}$ is defined by~\eqref{eq:f-ast}, with $\Phi$ in place of $f$.

Consider $l\in \sections{L}$. Then we may write $\phi_{1}(l)\in \sections{f^{\ast}E}\cong \smooth{M}\tensor_{\smooth{N}}\sections{E}$ by
\[ \phi_{1}(l)=\sum_{i} \alpha_{i}\tensor e_{i}\]
for some $\alpha_{i}\in \smooth{M}$ and $e_{i}\in \sections{E}$. Then, it is straightforward to check that
\[\big( \widetilde{\Phi}_{\ast}(\iota_{l}) \big)(\beta)=\iota_{l}(f^{\ast}(\beta))=0, \quad \big( \widetilde{\Phi}_{\ast}(\iota_{l}) \big)(\xi)=\iota_{l}( \phi_{1}^{\ast}(\xi))=\sum_{i} \alpha_{i}\cdot \Phi^{\ast}(\iota_{e_{i}}\xi)\]
for $\beta \in \smooth{N}$ and $\xi \in \sections{E^{\vee}}$. Thus, we have
\[\Phi_{\ast}(l) = \phi_{1}(l)\in \sections{f^{\ast}E}\subset \sections{SL^{\vee} \tensor (f^{\ast}\tangent{N}\oplus f^{\ast}E)}.\]

Next, consider $X\in \XX(M)$. Similarly, we may write 
\[f_{\ast}(X)=\sum_{i}\alpha_{i}\tensor Y_{i}\]
for some $\alpha_{i}\in \smooth{M}$ and $Y_{i}\in \XX(N)$. Then, one can verify that
\begin{gather*}
 \big( \widetilde{\Phi}_{\ast}(\nabla^{L}_{X}) \big)(\beta)=\widetilde{\nabla}^{L}_{X}(f^{\ast}(\beta))=f_{\ast}X(\beta),\\\big( \widetilde{\Phi}_{\ast}(\widetilde{\nabla}^{L}_{X}) \big)(\xi)=\widetilde{\nabla}^{L}_{X}( \phi_{1}^{\ast}(\xi))=\sum_{i}\alpha_{i}\cdot \phi_{1}^{\ast}(\widetilde{\nabla}^{E}_{Y_{i}}\xi) + \big( \widetilde{\nabla}^{L}_{X}( \phi_{1}^{\ast}(\xi)) - \sum_{i}\alpha_{i}\cdot \phi_{1}^{\ast}(\widetilde{\nabla}^{E}_{Y_{i}}\xi)\big)
\end{gather*}
for $\beta\in\smooth{N}$ and $\xi \in \sections{E^{\vee}}$. Thus, we have
\[ \Phi_{\ast}(X)=f_{\ast}X + \gamma(X) \in \sections{f^{\ast}\tangent{N}\oplus f^{\ast}E} \subset \sections{SL^{\vee} \tensor (f^{\ast}\tangent{N}\oplus f^{\ast}E)}\]
where 
\begin{equation}\label{eq:Gamma}
\gamma(X):\xi \mapsto \widetilde{\nabla}^{L}_{X}( \phi_{1}^{\ast}(\xi)) - \sum_{i}\alpha_{i}\cdot \phi_{1}^{\ast}(\widetilde{\nabla}^{E}_{Y_{i}}\xi).
\end{equation}
This completes the proof.
\end{proof}

Assume that $\Phi$ is a linear fibration. Then the bundle map $\phi_{1}:L\to f^{\ast}E$ is surjective, and thus obtains an exact sequence of vector bundles over $M$:
\[\begin{tikzcd}
 0 \arrow{r}& K \arrow{r}&  L \arrow{r}{\phi_{1}}& f^{\ast}E \arrow{r}& 0.
 \end{tikzcd}
 \]
By a choice of a splitting, we may identify 
$L\cong K\oplus f^{\ast}E$.
Under this identification, define the $\tangent{M}$-connection $\nabla^{L}$ on $L$ by 
\begin{equation}\label{eq:NablaL}
\nabla^{L}=\nabla^{K}+\nabla^{f^{\ast}E}
\end{equation}
where $\nabla^{K}$ denotes the $\tangent{M}$-connection on $K$ and $\nabla^{f^{\ast}E}$ denotes the pullback connection of the $\tangent{N}$-connection $\nabla^{E}$ on $E$. 
With these connections, by~\eqref{eq:Gamma}, we have $\gamma(X)=0$ for all $X\in\XX(M)$. Therefore, we have proved the following:
\begin{corollary}\label{cor:Ker1}
Under the hypothesis of Lemma~\ref{lem:PhiStar}, assume furthermore that $\Phi$ is a linear fibration. Then, for any choices of a $\tangent{M}$-connection on $K:=\ker \phi_{1}$ and a $\tangent{N}$-connection on $E$, there exists a $\tangent{M}$-connection on $L$ such that $\gamma=0$.
\end{corollary}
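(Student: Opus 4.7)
The plan is to extract a formal proof from the computation that immediately precedes the corollary in the body of the text: the choice of connections described there is exactly what one needs, and the verification that $\gamma$ vanishes is a short direct calculation via formula~\eqref{eq:Gamma}.

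First, I would use the hypothesis that $\Phi$ is a linear fibration to produce the short exact sequence
\[
\begin{tikzcd}
0 \arrow{r} & K \arrow{r} & L \arrow{r}{\phi_{1}} & f^{\ast}E \arrow{r} & 0
\end{tikzcd}
\]
of graded vector bundles over $M$. Picking any smooth splitting (which exists since sections functor is exact for vector bundles over a manifold), I identify $L\cong K\oplus f^{\ast}E$ and define
\[
\nabla^{L}:=\nabla^{K}\oplus \nabla^{f^{\ast}E},
\]
where $\nabla^{f^{\ast}E}$ denotes the pullback of $\nabla^{E}$ along $f$. This is the only nontrivial choice in the proof.

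Next, I would verify the vanishing of $\gamma$ on an arbitrary $X\in \XX(M)$. Writing $f_{\ast}(X)=\sum_{i}\alpha_{i}\tensor Y_{i}$ with $\alpha_{i}\in \smooth{M}$ and $Y_{i}\in \XX(N)$, I would test $\gamma(X)$ against an arbitrary $\xi \in \sections{E^{\vee}}$. Since $\phi_{1}^{\ast}(\xi)$ sits in the summand $(f^{\ast}E)^{\vee}\subset L^{\vee}$ corresponding to the chosen splitting, and since $\nabla^{L}$ was defined as a direct sum with the pullback connection on the $f^{\ast}E$ factor, the standard identity
\[
\nabla^{f^{\ast}E^{\vee}}_{X}\bigl(f^{\ast}\xi\bigr)=\sum_{i}\alpha_{i}\cdot f^{\ast}\bigl(\nabla^{E^{\vee}}_{Y_{i}}\xi\bigr)
\]
for pullback connections gives exactly the cancellation required by~\eqref{eq:Gamma}. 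Extending this termwise to the induced connections $\widetilde{\nabla}^{L}$ on $SL^{\vee}$ and $\widetilde{\nabla}^{E}$ on $SE^{\vee}$—which respect the tensor/symmetric algebra construction—shows that $\gamma(X)(\xi)=0$ on all of $\sections{SE^{\vee}}$, and hence $\gamma\equiv 0$.

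The proof is essentially a bookkeeping exercise and presents no serious obstacle; the only point deserving care is the compatibility between $\phi_{1}^{\ast}$ and the direct-sum decomposition $L^{\vee}\cong K^{\vee}\oplus (f^{\ast}E)^{\vee}$, namely that $\phi_{1}^{\ast}(\xi)$ lies entirely in the $(f^{\ast}E)^{\vee}$ summand. Once this is acknowledged, the defining property of the pullback connection delivers the conclusion without further work.
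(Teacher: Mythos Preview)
Your proposal is correct and follows exactly the same approach as the paper: choose a splitting of the short exact sequence $0\to K\to L\to f^{\ast}E\to 0$, set $\nabla^{L}=\nabla^{K}\oplus\nabla^{f^{\ast}E}$, and observe that the defining property of the pullback connection forces the two terms in~\eqref{eq:Gamma} to cancel. The paper records this argument in the paragraph immediately preceding the corollary and simply asserts the vanishing; your write-up supplies the one-line verification the paper leaves implicit.
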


Now we reformulate Proposition~\ref{prop:PsiMap} in terms of bundles of positively graded curved $L_{\infty}[1]$ algebras.
\begin{lemma}\label{lem:Equiv}
Let $\Phi=(f,\phi_{1}):(M,L,\lambda) \to (N,E,\mu)$, with $f:M\to N$ and $\phi_{1}:L\to E$, be an acyclic linear fibration in the category $\cLinfCat$ corresponding to an acyclic linear fibration $\Psi:(\cM,Q)\to (\cN,R)$ in the category $\DGMfdCat$. Then the following are equivalent.
\begin{enumerate}
\item Proposition~\ref{prop:PsiMap} holds, i.e., $\Psi_{\ast}$ is a quasi-isomorphism. \label{item:Psi}
\item The DG module map $\Phi_{\ast}$ in Lemma~\ref{lem:PhiStar} with $\gamma=0$, that is, \label{item:Phi}
\[\Phi_{\ast}:=
\begin{bmatrix}
f_{\ast}  & 0 \\
0 &  \phi_{1}
\end{bmatrix}
:  (\sections{SL^{\vee}\tensor (\tangent{M}\oplus L)}, D_{\lambda}) \to (\sections{SL^{\vee} \tensor (f^{\ast}\tangent{N}\oplus f^{\ast}E)}, D_{\mu}),
\]
is a quasi-isomorphism.
\item The DG module $\ker \Phi_{\ast}$ is acyclic. \label{item:KerPhi}
\end{enumerate}
\end{lemma}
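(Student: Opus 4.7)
The plan is to prove $(1) \Leftrightarrow (2)$ by a judicious choice of connections, and $(2) \Leftrightarrow (3)$ via the long exact sequence in cohomology of an appropriate short exact sequence.

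For $(1) \Leftrightarrow (2)$: by Proposition~\ref{prop:CatBundle} and Lemma~\ref{lem:PhiStar}, the functorial correspondence between $\DGMfdCat$ and $\cLinfCat$ identifies $\Psi_{\ast}$ with the DG module map $\Phi_{\ast}$, modulo isomorphisms of DG modules that depend on the chosen $\tangent{M}$-connection on $L$ and $\tangent{N}$-connection on $E$. Since being a quasi-isomorphism is intrinsic to $\Psi_{\ast}$, it is independent of these choices, so $(1)$ is equivalent to $\Phi_{\ast}$ being a quasi-isomorphism for any fixed choice of connections. To bring $\Phi_{\ast}$ into the block-diagonal form of $(2)$, I would choose a splitting $L \cong K \oplus f^{\ast}E$ with $K := \ker \phi_{1}$ (available because $\phi_{1}$ is a fibrewise surjection), fix a $\tangent{M}$-connection $\nabla^{K}$ on $K$ and a $\tangent{N}$-connection $\nabla^{E}$ on $E$, and invoke Corollary~\ref{cor:Ker1} to produce a $\tangent{M}$-connection on $L$ for which $\gamma = 0$. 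With these connections, $\Phi_{\ast}$ acquires exactly the form in $(2)$.

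For $(2) \Leftrightarrow (3)$, I would first verify that $\Phi_{\ast}$ is degree-wise surjective. Since $\Phi$ is a linear fibration, the base map $f:M \to N$ is a submersion and $\phi_{1}:L \to f^{\ast}E$ is a fibrewise surjection; both $f_{\ast}:\tangent{M} \to f^{\ast}\tangent{N}$ and $\phi_{1}$ are thus surjective bundle maps over the paracompact manifold $M$, yielding surjective maps on global sections. Tensoring over $\smooth{M}$ with the sections of the vector bundle $SL^{\vee}$ preserves surjectivity, so $\Phi_{\ast}$ is degree-wise surjective as a map of graded $\sections{SL^{\vee}}$-modules. This yields a short exact sequence of cochain complexes
\[
\begin{tikzcd}
0 \arrow{r} & \ker \Phi_{\ast} \arrow{r} & \sections{SL^{\vee}\tensor (\tangent{M}\oplus L)} \arrow{r}{\Phi_{\ast}} & \sections{SL^{\vee} \tensor (f^{\ast}\tangent{N}\oplus f^{\ast}E)} \arrow{r} & 0,
\end{tikzcd}
\]
and the associated long exact sequence in cohomology shows that $\Phi_{\ast}$ induces isomorphisms on cohomology if and only if $\ker \Phi_{\ast}$ is acyclic, giving $(2) \Leftrightarrow (3)$.

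The only delicate point is ensuring that the connections on $L$, $K$, and $E$ can be arranged so that $\gamma = 0$ while simultaneously respecting the splitting $L \cong K \oplus f^{\ast}E$; this compatibility is exactly the content of Corollary~\ref{cor:Ker1}, and the remainder of the argument is routine bookkeeping once degree-wise surjectivity of $\Phi_{\ast}$ is in hand.
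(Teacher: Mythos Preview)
Your proposal is correct and follows essentially the same approach as the paper: the equivalence $(1)\Leftrightarrow(2)$ is obtained from Lemma~\ref{lem:PhiStar} and Corollary~\ref{cor:Ker1}, and $(2)\Leftrightarrow(3)$ from the short exact sequence arising from the surjectivity of $\Phi_{\ast}$ together with the long exact sequence in cohomology. Your write-up is in fact more detailed than the paper's, which dispatches $(1)\Leftrightarrow(2)$ in a single sentence and cites \cite[Theorem~1.3.1]{MR1269324} for $(2)\Leftrightarrow(3)$.
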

\begin{proof}
The equivalence between~\eqref{item:Psi} and~\eqref{item:Phi} is clear by Lemma~\ref{lem:PhiStar} and Corollary~\ref{cor:Ker1}. 

We prove the equivalence between~\eqref{item:Phi} and~\eqref{item:KerPhi}. 
Since $\Phi=(f,\phi_{1})$ is a linear fibration, both bundle maps $f_{\ast}:\tangent{M}\to f^{\ast}\tangent{N}$ and $\phi_{1}:L\to f^{\ast}E$ are surjective. Thus, $\Phi_{\ast}$ is a surjective morphism of cochain complexes, and induces an exact sequence of cochain complexes
\[ 0 \to \ker \Phi_{\ast} \to (\sections{SL^{\vee}\tensor (\tangent{M}\oplus L)}, D_{\lambda}) \xto{\Phi_{\ast}} (\sections{SL^{\vee} \tensor (f^{\ast}\tangent{N}\oplus f^{\ast}E)}, D_{\mu}) \to 0.\]
By a standard argument in homological algebra (see for instance \cite[Theorem~1.3.1]{MR1269324}), the cochain complex $\ker \Phi_{\ast}$ is acyclic if and only if $\Phi_{\ast}$ is a quasi-isomorphism. This proves the equivalence between~\eqref{item:Phi} and~\eqref{item:KerPhi}.
\end{proof}

\subsubsection{Kernel of $\Phi_{\ast}$}

Lemma~\ref{lem:Equiv} reduces the proof of Proposition~\ref{prop:PsiMap} to showing that $\ker \Phi_{\ast}$ is acyclic. 
To do so, we analyse the DG module structure of $\ker \Phi_{\ast}$. In the following lemma, we only require $\Phi$ to be a linear fibration.

\begin{lemma}\label{lem:KerPhi}
Let $\Phi=(f,\phi_{1}):(M,L,\lambda)\to (N,E,\mu)$ be a linear fibration in the category $\cLinfCat$, where $f:M\to N$ and $\phi_{1}:L\to E$.
Suppose that the $\tangent{M}$-connection $\nabla=\nabla^{L}$ on $L$ is defined by Eq.~\eqref{eq:NablaL}. Then,
\[\ker \Phi_{\ast} = (\sections{SL^{\vee}\tensor (\ker f_{\ast}\oplus \ker \phi_{1})}, \overline{D})\]
is a DG $(\sections{SL^{\vee}},\lambda^{\transpose})$-module,
where the differential $\overline{D}=D_{\lambda} \big|_{\sections{SL^{\vee}\tensor (\ker f_{\ast}\oplus \ker \phi_{1})}}$ is defined by the restriction of $D_{\lambda}$ in Proposition~\ref{prop:Tangent}.
\end{lemma}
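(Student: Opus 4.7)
The plan is to exploit the block-diagonal form of $\Phi_{\ast}$ afforded by Corollary~\ref{cor:Ker1}. First I would fix a splitting of the short exact sequence $0 \to K \to L \xto{\phi_{1}} f^{\ast}E \to 0$, identifying $L \cong K \oplus f^{\ast}E$, and take $\nabla^{L} = \nabla^{K} + \nabla^{f^{\ast}E}$ as in Eq.~\eqref{eq:NablaL}. With this choice, Lemma~\ref{lem:PhiStar} together with Corollary~\ref{cor:Ker1} yields $\gamma = 0$, so
\[
\Phi_{\ast} = \begin{bmatrix} f_{\ast} & 0 \\ 0 & \phi_{1} \end{bmatrix}
\]
acts as the $\sections{SL^{\vee}}$-linear extension of the bundle map $f_{\ast} \oplus \phi_{1} : \tangent{M} \oplus L \to f^{\ast}\tangent{N} \oplus f^{\ast}E$ over $M$.

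Next I would identify the kernel pointwise and then globally. Since $\Phi$ is a linear fibration, $f$ is a submersion and $\phi_{1}$ is degree-wise surjective, so $f_{\ast}$ and $\phi_{1}$ are surjective bundle maps of locally constant rank; their kernels $\ker f_{\ast} \subset \tangent{M}$ and $\ker \phi_{1} \subset L$ are therefore smooth (graded) subbundles over $M$. Tensoring the resulting short exact sequence of vector bundles with the locally free $\smooth{M}$-module of sections of $SL^{\vee}$ preserves exactness, giving
\[
\ker \Phi_{\ast} = \sections{SL^{\vee} \tensor (\ker f_{\ast} \oplus \ker \phi_{1})}
\]
as a graded $\sections{SL^{\vee}}$-submodule of $\sections{SL^{\vee} \tensor (\tangent{M} \oplus L)}$.

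Finally I would invoke that $\Phi_{\ast}$ is a chain map of DG $(\sections{SL^{\vee}}, \lambda^{\transpose})$-modules by Lemma~\ref{lem:PhiStar}: its kernel is then automatically stable under $D_{\lambda}$, since $\Phi_{\ast}(D_{\lambda}\omega) = D_{\mu}(\Phi_{\ast}\omega) = 0$ whenever $\Phi_{\ast}(\omega) = 0$. Consequently the induced differential $\overline{D}$ on $\ker \Phi_{\ast}$ is exactly the restriction of $D_{\lambda}$, as claimed. The only real content of the lemma is the initial choice of connection making $\gamma$ vanish; once that is in place, the identification of $\ker \Phi_{\ast}$ is pure bookkeeping and I do not anticipate any genuine obstacle in this step — the substantive difficulty, namely establishing acyclicity of $\ker \Phi_{\ast}$, is deferred to the subsequent local argument invoking Lemma~\ref{lem:U-Exact} and Corollary~\ref{cor:KeyLem3}.
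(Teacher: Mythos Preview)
Your argument is correct and is exactly the unpacking of the paper's one-line proof, which simply cites Lemma~\ref{lem:PhiStar}, Corollary~\ref{cor:Ker1}, and Proposition~\ref{prop:Tangent}. The three steps you outline---choosing the connection so that $\gamma=0$, identifying the kernel via the block-diagonal form, and observing that the kernel of a DG-module map is a DG-submodule---are precisely what those citations encode.
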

\begin{proof}
The lemma is an immediate consequence of 
Lemma~\ref{lem:PhiStar}, Corollary~\ref{cor:Ker1}, and Proposition~\ref{prop:Tangent}.
\end{proof}

We compute explicitly those components of the differential $\overline{D}$ that are relevant for this paper.
\begin{corollary}\label{cor:KerPhi}
In the setting of Lemma~\ref{lem:KerPhi}, let $K^{0}:=\ker f_{\ast}$ and $\ker \phi_{1}=\bigoplus_{i=1}^{b}K^{i}$. 
Then the differential $\overline{D}=(\overline{d}_{ij})_{0\leq i,j \leq b}$ with 
\[\overline{d}_{ij}:\sections{SL^{\vee}\tensor K^{j}}\to \sections{SL^{\vee}\tensor K^{i}}\]
satisfies
\begin{enumerate}
\item $\overline{d}_{ij}=0$ if $i<j$;
\item $\overline{d}_{ii}=\lambda^{\transpose}\tensor \id$ for all $i$;
\item the map $\overline{d}_{10}$ is the graded $\sections{SL^{\vee}}$-linear map defined by
\[\overline{d}_{10}(X)= -\nabla^{L}_{X}\lambda_{0}, \quad \forall X\in \sections{K^{0}} ;\]
\item for each $j=1,\ldots,b-1$, the map $\overline{d}_{j+1,j}$ is the graded $\sections{SL^{\vee}}$-linear map defined by
\[\overline{d}_{j+1,j}(k)= -\lambda_{1}(k), \quad \forall k\in \sections{K^{j}}.\]
\end{enumerate}
\end{corollary}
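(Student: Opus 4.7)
The plan is to derive this corollary as a direct specialisation of Proposition~\ref{prop:Tangent} to the submodule identified in Lemma~\ref{lem:KerPhi}. Since $\overline{D}$ is defined as the restriction of $D_\lambda$ to $\sections{SL^\vee \tensor (K^0 \oplus \ker \phi_1)}$, each component $\overline{d}_{ij}:\sections{SL^\vee \tensor K^j}\to \sections{SL^\vee \tensor K^i}$ is simply the restriction of the corresponding $d_{ij}$ from Proposition~\ref{prop:Tangent}. Items (1) and (2) are then immediate: the vanishing $d_{ij}=0$ for $i<j$ and the diagonal $d_{ii} = Q\tensor\id = \lambda^\transpose \tensor \id$ survive the restriction unchanged.

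For item (3), I would expand the formula $d_{10}(X) = \pr_1 \circ [\lambda, \nabla^L_X]$ by viewing the output as an element of $\sections{SL^\vee \tensor L^1} \cong \bigoplus_n \sections{\Hom(S^n L, L^1)}$ and evaluating it on each homogeneous component $\sections{S^n L}$ in turn. The key simplification is a degree-counting argument: because $L$ is concentrated in strictly positive degrees and each Taylor coefficient $\lambda_n$ raises internal degree by one, the composition $\pr_1 \circ \lambda$ vanishes on $\sections{S^{\geq 1}L}$, so only the action on $S^0 L = \smooth{M}$ can contribute. Evaluating the commutator on the constant $1$ gives $[\lambda,\nabla^L_X](1) = -\nabla^L_X(\lambda_0)$, which is the claimed formula. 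A final check using the splitting $\nabla^L = \nabla^K + \nabla^{f^\ast E}$ from Eq.~\eqref{eq:NablaL} confirms that for $X \in \ker f_\ast$ the pullback-connection piece $\nabla^{f^\ast E}_X$ annihilates the $f^\ast E$-component of $\lambda_0$, so the output genuinely lies in $\sections{K^1}$.

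For item (4), I would apply the formula $d_{j+1,j}(s) = (-1)^{j+1}\iota_s \lambda_1$ from Proposition~\ref{prop:Tangent}. Unpacking $\iota_s \lambda_1$, this becomes (up to sign) the constant section $\lambda_1(s) \in \sections{L^{j+1}}$ viewed inside $\sections{SL^\vee \tensor L^{j+1}}$, and the $L_\infty[1]$-morphism compatibility $\phi_1 \circ \lambda_1 = \mu_1 \circ \phi_1$ guarantees $\lambda_1(\ker \phi_1) \subset \ker \phi_1$, so the restriction lands in $\sections{K^{j+1}}$ whenever $k \in \sections{K^j}$. The main obstacle---really the only subtle point---is tracking the sign conventions in the identifications underlying Proposition~\ref{prop:Tangent} precisely enough to recover the uniform $-1$ sign asserted by the corollary across all $j$. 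Beyond this bookkeeping, the corollary is a transparent consequence of Proposition~\ref{prop:Tangent} combined with the restriction to $\ker \phi_1$.
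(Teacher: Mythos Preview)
Your proposal is correct and follows essentially the same approach as the paper's proof: both derive items (1)--(4) by restricting the components $d_{ij}$ of Proposition~\ref{prop:Tangent} to the kernel and then verifying, via the split connection~\eqref{eq:NablaL} and the linearity relation $\phi_1\circ\lambda_1=\mu_1\circ\phi_1$, that the outputs land in $K^{i}$. Your degree-counting justification for why $\pr_1\circ[\lambda,\nabla^L_X]$ collapses to $-\nabla^L_X\lambda_0$ is more explicit than the paper's one-line assertion of the same identity, and your acknowledgement of the sign bookkeeping in item~(4) matches the paper's equally terse claim $(-1)^{j+1}\iota_k\lambda_1=-\lambda_1(k)$.
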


\begin{proof}
The identities $\overline{d}_{ij}=0$ if $i<j$ and $\overline{d}_{ii}=\lambda^{\transpose}\tensor \id$ follows directly from  Proposition~\ref{prop:Tangent}, using the fact that $Q=\lambda^{\transpose}$ in our setting.

If $X\in \sections{K^{0}}\subset \XX(M)$, then, by Proposition~\ref{prop:Tangent}~\eqref{item:3}, we have
\[\overline{d}_{10}(X)=\pr_{1}[\lambda,\nabla_{X}]=-\nabla_{X}\lambda_{0}\]
where $\pr_{1}:\sections{SL}\to \sections{L^{1}}$ denotes the natural projection, and $\lambda$ is viewed as a $\smooth{M}$-coderivation $\sections{SL}\to \sections{SL}$.
Since $f_{\ast}(X)=0$, Eq.~\eqref{eq:NablaL} implies
\[\phi_{1}(\overline{d}_{10}(X))=-\phi_{1}(\nabla_{X}\lambda_{0})=-\nabla^{f^{\ast}E}_{f_{\ast}(X)}\lambda_{0}=0\]
which shows that $\overline{d}_{10}(X)\in \sections{K^{1}}$.

For $k\in \sections{K^{j}}\subset \sections{L^{j}}$ with $j>0$, by Proposition~\ref{prop:Tangent}~\eqref{item:4}, we have
\[\overline{d}_{ij}(k)=(-1)^{j+1}\iota_{k}\lambda_{1}=-\lambda_{1}(k)\]
whenever $i=j+1$.
Since $\Phi$ is a linear morphism, the relation $\phi_{1}\circ \lambda_{1}(k)=\mu_{1}\circ \phi_{1}(k)=0$ shows that $\overline{d}_{ij}(k)\in \sections{K^{i}}$ for $i=j+1$. This completes the proof.
\end{proof}

Observe that when $\Phi$ is a linear fibration, the kernel $\ker \Phi_{\ast}$ carries the structure of a sheaf of DG $(\sections{SL^{\vee}},\lambda^{\transpose})$-modules over $M$. Indeed, for each open subset $U\subset M$, there is a DG $(\sections{U; SL^{\vee}},\lambda^{\transpose}|_{U})$-module structure on
\begin{equation}\label{eq:KerPhiU}
 \ker\Phi_{\ast}|_{U}=(\sections{U; SL^{\vee}\tensor (\ker f_{\ast}\oplus \ker \phi_{1})}, \overline{D}|_{U}),
 \end{equation}
which induces a sheaf structure on $\ker \Phi_{\ast}$ over $M$. 

\begin{remark}\label{rem:KerPsi}
This sheaf structure on $\ker \Phi_{\ast}$ can also be described geometrically, in terms of DG manifolds. Let $\Psi:(\cM,Q)\to (\cN,R)$ be a linear fibration corresponding to $\Phi_{\ast}$. 
Then the induced map
\[\Psi_{\ast}:(\tangent{\cM},\liederivative{Q}) \to (\Psi^{\ast}\tangent{\cN}, \liederivative{Q,R})\]
is a surjective morphism of DG vector bundles over $(\cM,Q)$. Consequently, $\ker \Psi_{\ast}$ is itself a DG vector bundle over $(\cM,Q)$. 
By Remark~\ref{rem:Sheaf}, this kernel $\ker\Psi_{\ast}$, which corresponds to $\ker \Phi_{\ast}$, admits the structure of a locally free sheaf of DG $(\smooth{\cM},Q)$-modules over $M$. The resulting sheaf structure on $\ker \Phi_{\ast}$ agrees with the one described above, which was induced by the DG module structure over $(\sections{SL^{\vee}},\lambda^{\transpose})$.
\end{remark}

Let $\cO_{M}$ denote the sheaf of algebras of smooth functions on $M$. Since $(\sections{SL^{\vee}},\lambda^{\transpose})$ is a sheaf of DG $\cO_{M}$-algebras, the kernel $\ker \Phi_{\ast}$ is a sheaf of DG $\cO_{M}$-modules. 
The following lemma (see \cite[Lemma~3.9]{seol2024atiyah}) implies that the acyclicity of $\ker \Phi_{\ast}$ can be verified locally.

\begin{lemma}[\cite{seol2024atiyah}]\label{lem:Locality}
Let $\cO_{M}$ be a sheaf of algebras of smooth functions on a smooth manifold $M$ and let $(\cE,\cQ_{\cE})$ be a sheaf of DG $\cO_{M}$-modules. Then the cohomology presheaf
\[ U \mapsto H^{\bullet}(\cE(U),\cQ_{\cE}(U)):= \frac{\ker (\cQ_{\cE}(U))}{\image (\cQ_{\cE}(U))}\]
is, in fact, a sheaf of graded $\cO_{M}$-modules. 
\end{lemma}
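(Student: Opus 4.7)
The plan is to verify directly that the presheaf $\mathcal{H}: U \mapsto H^{\bullet}(\cE(U), \cQ_{\cE}(U))$ satisfies both the identity and gluing axioms for sheaves of graded $\cO_{M}$-modules, using the fact that the smooth manifold $M$ admits partitions of unity subordinate to any open cover. The key observation that makes both arguments run smoothly is that $\cQ_{\cE}$ is $\cO_{M}$-linear: since $\cO_{M}$ is a sheaf of algebras of smooth functions, it is concentrated in degree $0$ with trivial differential, so the graded Leibniz rule for the DG $\cO_{M}$-module structure on $\cE$ reduces to $\cQ_{\cE}(f \cdot s) = f \cdot \cQ_{\cE}(s)$ for $f \in \cO_{M}(U)$ and $s \in \cE(U)$.

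For the identity axiom, suppose $s \in \mathcal{H}(U)$ is represented by a cocycle $\sigma \in \cE(U)$ and that $s|_{U_{i}} = 0$ for each member $U_{i}$ of an open cover $\{U_{i}\}$ of $U$. Then there exist $\tau_{i} \in \cE(U_{i})$ with $\cQ_{\cE}(\tau_{i}) = \sigma|_{U_{i}}$. Fixing a partition of unity $\{\rho_{i}\}$ subordinate to $\{U_{i}\}$ and setting $\tau = \sum_{i} \rho_{i} \tau_{i} \in \cE(U)$, where each summand is extended by zero from $U_{i}$ to $U$ (legitimate because $\rho_{i}$ has support in $U_{i}$), the $\cO_{M}$-linearity of $\cQ_{\cE}$ yields $\cQ_{\cE}(\tau) = \sum_{i} \rho_{i} \cQ_{\cE}(\tau_{i}) = \sum_{i} \rho_{i} \sigma|_{U_{i}} = \sigma$, whence $s = 0$.

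For the gluing axiom, suppose sections $s_{i} \in \mathcal{H}(U_{i})$ are given that agree on overlaps. Pick cocycle representatives $\sigma_{i} \in \cE(U_{i})$ and, for each pair $(i,j)$, a witness $\tau_{ij} \in \cE(U_{i} \cap U_{j})$ with $\cQ_{\cE}(\tau_{ij}) = \sigma_{i}|_{U_{i} \cap U_{j}} - \sigma_{j}|_{U_{i} \cap U_{j}}$. Define $\sigma = \sum_{i} \rho_{i} \sigma_{i} \in \cE(U)$; then by linearity $\cQ_{\cE}(\sigma) = \sum_{i} \rho_{i} \cQ_{\cE}(\sigma_{i}) = 0$. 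To confirm that $[\sigma|_{U_{j}}] = s_{j}$ in $\mathcal{H}(U_{j})$, compute
\[
\sigma|_{U_{j}} - \sigma_{j} = \sum_{i} \rho_{i}(\sigma_{i} - \sigma_{j})|_{U_{i} \cap U_{j}} = \cQ_{\cE}\Bigl(\sum_{i} \rho_{i} \tau_{ij}\Bigr),
\]
which exhibits the difference as a coboundary.

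The only obstacle worth flagging is the bookkeeping with extensions by zero, which requires that each $\rho_{i} \tau_{i}$ and $\rho_{i} \tau_{ij}$ define honest sections of $\cE$ over $U$ and $U_{j}$ respectively; this is precisely where the support condition on the partition of unity is used. The graded $\cO_{M}$-module structure on $\mathcal{H}$ is inherited from $\cE$, and since $\cQ_{\cE}$ is homogeneous of degree $+1$, the cohomology splits by degree, so $\mathcal{H}$ is a sheaf of graded $\cO_{M}$-modules as claimed.
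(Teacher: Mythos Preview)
Your proof is correct. The paper itself does not prove this lemma but cites it from \cite{seol2024atiyah}, so there is no in-paper argument to compare against; your partition-of-unity approach---hinging on the $\cO_{M}$-linearity of $\cQ_{\cE}$, which follows from $\cO_{M}$ sitting in degree zero with trivial differential---is the standard and expected one.
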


Applying Lemma~\ref{lem:Locality} to the sheaf $\ker \Phi_{\ast}$, we obtain the following consequence.
\begin{corollary}\label{cor:Locality}
The DG $(\sections{SL^{\vee}},\lambda^{\transpose})$-module $\ker \Phi_{\ast}$ described in Lemma~\ref{lem:KerPhi} is acyclic if, for every point $p\in M$, there exists an open neighbourhood $U$ of $p$ such that the restriction $\ker \Phi_{\ast}|_{U}$ is acyclic.
\end{corollary}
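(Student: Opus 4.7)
The plan is to apply Lemma~\ref{lem:Locality} directly. First I would note that $\ker\Phi_{\ast}$ is naturally a sheaf of DG $(\sections{SL^{\vee}},\lambda^{\transpose})$-modules on $M$: as indicated in equation~\eqref{eq:KerPhiU} and Remark~\ref{rem:KerPsi}, each open $U\subset M$ gives a DG $(\sections{U;SL^{\vee}},\lambda^{\transpose}|_{U})$-module $\ker\Phi_{\ast}|_{U}$, and the restriction maps are compatible with the ambient structure. This is essentially automatic from the explicit description of the differential $\overline{D}$ in Corollary~\ref{cor:KerPhi}: each component $\overline{d}_{ij}$ is built from local operations (the coderivation $\lambda^{\transpose}$, the connection $\nabla^{L}$, and the curved $L_{\infty}[1]$ structure map $\lambda_{1}$), all of which are local and therefore restrict well to open subsets. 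Since $(\sections{SL^{\vee}},\lambda^{\transpose})$ is a sheaf of DG $\cO_{M}$-algebras, it follows that $\ker\Phi_{\ast}$ is, in particular, a sheaf of DG $\cO_{M}$-modules, putting us in the setting of Lemma~\ref{lem:Locality}.

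Next I would apply Lemma~\ref{lem:Locality} to conclude that the cohomology presheaf $U\mapsto H^{\bullet}(\ker\Phi_{\ast}(U),\overline{D}|_{U})$ is in fact a sheaf of graded $\cO_{M}$-modules on $M$. The local acyclicity hypothesis then tells us that every point $p\in M$ admits an open neighbourhood $U_{p}$ with $H^{\bullet}(\ker\Phi_{\ast}|_{U_{p}})=0$. Since the cohomology sheaf vanishes on the open cover $\{U_{p}\}_{p\in M}$ of $M$, the sheaf axioms force it to vanish globally. Evaluating on $M$ itself yields $H^{\bullet}(\ker\Phi_{\ast}(M),\overline{D})=0$, which is precisely the acyclicity of $\ker\Phi_{\ast}$ as a DG $(\sections{SL^{\vee}},\lambda^{\transpose})$-module.

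There is essentially no genuine obstacle in this argument, since it is a formal sheaf-theoretic passage from local vanishing to global vanishing once the sheaf structure is recognised. The real content has already been packaged into Remark~\ref{rem:KerPsi} (which identifies $\ker\Phi_{\ast}$ as a DG vector bundle, and hence a locally free sheaf of DG modules) and into Lemma~\ref{lem:Locality} itself (where the nontrivial step---that cohomology of a sheaf of DG $\cO_{M}$-modules is again a sheaf, not merely a presheaf---is proved using partitions of unity and the softness properties of $\cO_{M}$). The present corollary is merely the clean extraction of the principle that these two inputs combine to reduce a global acyclicity question to a purely local one.
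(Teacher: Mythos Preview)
Your proposal is correct and follows essentially the same approach as the paper: recognise $\ker\Phi_{\ast}$ as a sheaf of DG $\cO_{M}$-modules via Remark~\ref{rem:KerPsi}, invoke Lemma~\ref{lem:Locality} to make the cohomology a sheaf, and then use the locality axiom on the given open cover to conclude global vanishing.
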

\begin{proof}
As described in Remark~\ref{rem:KerPsi} and above, we may view $\ker \Phi_{\ast}$ as a sheaf of DG $\cO_{M}$-modules over $M$. 
By Lemma~\ref{lem:Locality}, the cohomology $H(\ker \Phi_{\ast})$ is a sheaf. 
Note that a cochain complex is acyclic if and only if its cohomology vanishes. 

Assume that for every point $p\in M$, there is an open neighbourhood $U_{p}$ of $p$ such that $H(\ker \Phi_{\ast}(U_{p}))=0$.
Then for any global section $X\in H(\ker \Phi_{\ast}(M))$, 
its restriction to each open set $X|_{U_{p}}\in H(\ker \Phi_{\ast})(U_{p})=0$ must vanish. 
Since $H(\ker \Phi_{\ast})$ is a sheaf, the locality axiom implies $X=0$. 
Hence, $H(\ker \Phi_{\ast}(M))=0$, so the DG module $\ker \Phi_{\ast}$, considered as the complex of global sections, is acyclic. 
This completes the proof.
\end{proof}

By Lemma~\ref{lem:Equiv} and Corollary~\ref{cor:Locality}, it remains to show that for each point $p\in M$, there is an open neighbourhood $U$ of $p$ such that $\ker \Phi_{\ast}|_{U}$ is acyclic. For this, we distinguish two cases, depending on whether the point $p\in M$ is classical or not---see Definition~\ref{defn:WE}.

The case for a non-classical point is standard. Observe that the set of classical points $Z(\lambda_{0})$ is a closed subset of $M$.
\begin{lemma}\label{lem:NonClassical}
Let $\Phi:(M,L,\lambda)\to (N,E,\mu)$ be a linear fibration in $\cLinfCat$. Then, for $U=M-Z(\lambda_{0})$, the DG module $\ker \Phi_{\ast}|_{U}$ as in Eq.~\eqref{eq:KerPhiU} is acyclic.
\end{lemma}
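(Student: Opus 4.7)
The strategy is to exhibit $\ker\Phi_\ast|_U$ as the sections of a DG vector bundle over a DG manifold whose structure sheaf of functions is acyclic, so that Corollary~\ref{cor:KeyLem3} applies directly. By Remark~\ref{rem:KerPsi}, $\ker\Psi_\ast$ (which corresponds to $\ker\Phi_\ast$) is a DG vector bundle over $(\cM,Q)$, so its restriction to $U$ is a DG vector bundle over the DG manifold corresponding to $(U,L|_U,\lambda|_U)$. The problem therefore reduces to proving the acyclicity of the DG algebra of functions $(\sections{U;SL^{\vee}},\lambda^{\transpose}|_U)$.

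To establish this acyclicity, I would exploit the fact that $\lambda_0|_U \in \sections{U;L^{1}}$ is nowhere vanishing. Choosing any fibre metric on $L^{1}$, define $\xi \in \sections{U;(L^{1})^{\vee}}$ by $\xi(\argument) = \langle \lambda_0,\argument\rangle / \lvert\lambda_0\rvert^{2}$, a globally defined degree $-1$ section with $\iota_{\lambda_0}\xi = 1$. Let $h$ denote the symmetric-multiplication operator $\alpha\mapsto \xi\alpha$ on $\sections{U;SL^{\vee}}$, which has degree $-1$. Since $\lambda^{\transpose}$ is a graded derivation, a short computation gives
\[
\lambda^{\transpose}\circ h + h\circ \lambda^{\transpose} \;=\; (1+w)\cdot\,, \qquad w := \sum_{i\geq 1} \lambda_i^{\transpose}(\xi) \in \bigoplus_{i\geq 1} \sections{U;S^{i}L^{\vee}}.
\]

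The key technical point is the invertibility of $1+w$. Because $L$ is of finite rank and concentrated in strictly positive degrees, each homogeneous component of $\sections{U;SL^{\vee}}$ is supported in only finitely many symmetric degrees. Multiplication by $w$ strictly raises the symmetric degree, hence is nilpotent on each homogeneous component, so $v := (1+w)^{-1} = \sum_{k\geq 0}(-w)^{k}$ is well defined. From $(\lambda^{\transpose})^{2}=0$ one obtains $\lambda^{\transpose}(w) = (\lambda^{\transpose})^{2}(\xi) = 0$, and differentiating the identity $v(1+w)=1$ then forces $\lambda^{\transpose}(v)=0$. For any cocycle $\alpha$, the element $v\alpha$ is therefore also a cocycle, and
\[
\lambda^{\transpose}\bigl(h(v\alpha)\bigr) = (1+w)\cdot v\alpha = \alpha,
\]
so $\alpha$ is exact. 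This proves acyclicity of $(\sections{U;SL^{\vee}},\lambda^{\transpose}|_U)$, and Corollary~\ref{cor:KeyLem3} then yields the acyclicity of $\ker\Phi_\ast|_U$. The main obstacle is really just the invertibility step: one must use the positive amplitude hypothesis to ensure that the ``curvature corrections'' collected in $w$ act nilpotently in each cohomological degree, and no analogous argument would survive without that finiteness.
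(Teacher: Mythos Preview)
Your argument is correct and follows the same overall strategy as the paper: both reduce to Corollary~\ref{cor:KeyLem3} by identifying $\ker\Phi_\ast|_U$ with the sections of a DG vector bundle over $(\cM|_U,Q|_U)$ and then proving that the DG algebra of functions $(\sections{U;SL^\vee},\lambda^\transpose|_U)$ is acyclic. The only difference is that the paper invokes this acyclicity as a known fact (citing \cite[Lemma~5.4]{arXiv:2303.11140}), whereas you supply an explicit contracting-homotopy proof via the section $\xi$ dual to $\lambda_0$ and the inversion of $1+w$; your added detail is sound, and the positive-amplitude hypothesis is used exactly where it should be, to guarantee that $w$ acts nilpotently in each fixed internal degree.
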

\begin{proof}
Let $(\cM|_{U},Q|_{U})$ be the DG manifold of positive amplitude corresponding to $(U,L|_{U}, \lambda|_{U})$. Then $\ker \Phi_{\ast}|_{U}$ corresponds to a DG vector bundle over $(\cM|_{U},Q|_{U})$. 

It is well known (see for instance~\cite[Lemma~5.4]{arXiv:2303.11140}) that if a DG manifold of positive amplitude does not contain any classical points, then its DG algebra of functions is acyclic. Since $U$ consists of non-classical points, $(\smooth{\cM|_{U}},Q|_{U})$ is acyclic. By Corollary~\ref{cor:KeyLem3}, we may conclude that $\ker \Phi_{\ast}|_{U}$ is acyclic. This completes the proof.
\end{proof}

Now, we consider classical points. To prove that there exists an open neighbourhood $U$ around a classical point such that $\ker\Phi_{\ast}|_{U}$ is acyclic, we require $\Phi$ to be an acyclic linear fibration. 

Let $\Phi=(f,\phi_{1}):(M,L,\lambda)\to (N,E,\mu)$ be an acyclic linear fibration in the category $\cLinfCat$, with $f:M\to N$, $\phi_{1}:L\to E$. 
Since $\Phi$ is a fibration, we may write $L=\bigoplus_{i=1}^{b}L^{i}$ and $E=\bigoplus_{i=1}^{b} E^{i}$ (that is, $E^{i}=0$ for $i>b$). 
Moreover, since $\Phi$ is a weak equivalence, the pair $(f,\phi_{1})$ induces a quasi-isomorphism between the tangent complexes at each classical point. 
More precisely, if $p\in M$ is a classical point, then the maps $f_{\ast}|_{p}$ and $\phi_{1}|_{p}$ appearing in the diagram
\[
\begin{tikzcd}
0 \arrow{r} & \tangentp{f(p)}{N} \arrow{r}{D_{f(p)}\mu_{0}} & E^{1}|_{f(p)} \arrow{r}{\mu_{1}|_{f(p)}} & E^{2}|_{f(p)} \arrow{r}{\mu_{1}|_{f(p)}} & \cdots \arrow{r}{\mu_{1}|_{f(p)}} & E^{b}|_{f(p)}\arrow{r} & 0\\
0 \arrow{r} & \tangentp{p}{M}\arrow{u}{f_{\ast}|_{p}} \arrow{r}{D_{p}\lambda_{0}} & L^{1}|_{p} \arrow{u}{\phi_{1}|_{p}} \arrow{r}{\lambda_{1}|_{p}} & L^{2}|_{p} \arrow{u}{\phi_{1}|_{p}}\arrow{r}{\lambda_{1}|_{p}} & \cdots\arrow{r}{\lambda_{1}|_{p}} & L^{b}|_{p}\arrow{r} \arrow{u}{\phi_{1}|_{p}}& 0
\end{tikzcd}
\]
induce a quasi-isomorphism of cochain complexes. Here, $L^{i}|_{p}$ denotes the fibre of $L^{i}$ at $p$, 
and similarly, $E^{i}|_{f(p)}$ denotes the fibre of $E^{i}$ at $f(p)$.
Note that, since $\Phi$ is a fibration, both maps $f_{\ast}|_{p}$ and $\phi_{1}|_{p}$ are surjective. Thus, by a standard argument in homological algebra (c.f. proof of Lemma~\ref{lem:Equiv}), the kernel of $(f_{\ast}|_{p}, \phi_{1}|_{p})$ is acyclic. In other words, by writing $K^{0}:=\ker f_{\ast}$ and $\ker \phi_{1}=\bigoplus_{i=1}^{b}K^{i}$, we have an exact sequence
\begin{equation}\label{eq:SESp}
\begin{tikzcd}
0 \arrow{r} & K^{0}|_{p} \arrow{r}{\overline{D_{p}\lambda_{0}}} & K^{1}|_{p} \arrow{r}{\overline{\lambda}_{1}|_{p}} & K^{2}|_{p} \arrow{r}{\overline{\lambda}_{1}|_{p}} & \cdots \arrow{r}{\overline{\lambda}_{1}|_{p}} & K^{b}|_{p} \arrow{r} & 0
\end{tikzcd}
\end{equation}
where $\overline{D_{p}\lambda_{0}} := (D_{p}\lambda_{0})|_{K^{0}|_{p}}$ and $\overline{\lambda}_{1}|_{p}:= \lambda_{1}|_{K|_{p}}$.  

By Eq.~\eqref{eq:cL-infty}, the operator $\overline{\lambda}_{1}:K^{i}\to K^{i+1}$ does not extend the exact sequence~\eqref{eq:SESp} to an open neighbourhood of $p$; in fact, it does not even define a cochain complex. 
However, by choosing an appropriate correction to $\overline{\lambda}_{1}$, one can extend~\eqref{eq:SESp} to an open neighbourhood $U$ of $p$. 
The key observation is that a morphism of vector bundles that is surjective (resp. isomorphic) at the fibre over $p$ remains surjective (resp. isomorphic) at every fibre in a neighbourhood of $p$.

\begin{lemma} \label{lem:U-Exact}
Let $\Phi=(f,\phi_{1}):(M,L,\lambda)\to (N,E,\mu)$ be an acyclic linear fibration in the category $\cLinfCat$, where $f:M\to N$, $\phi_{1}:L\to E$, and let $\nabla:=\nabla^{L}$ be the $\tangent{M}$-connection on $L$ defined as in Eq.~\eqref{eq:NablaL}. For each classical point $p\in M$, 
there exists an open neighbourhood $U$ of $p$ and 
appropriate choices of $\delta_{i}:K^{i}|_{U}\to K^{i+1}|_{U}$ for $i=0,1,\ldots,b-1$ such that
\begin{equation}\label{eq:DeltaSES}
\begin{tikzcd}
0\arrow{r}& K^{0}|_{U} \arrow{r}{\delta_{0}}&  K^{1}|_{U} \arrow{r}{\delta_{1}} & K^{2}|_{U} \arrow{r}& \cdots \arrow{r}& K^{b-1}|_{U} \arrow{r}{\delta_{b-1}} & K^{b}|_{U} \arrow{r}& 0
\end{tikzcd}
\end{equation}
is an exact sequence of vector bundles over $U$ extending~\eqref{eq:SESp}.
In fact, for $i=0,1,\ldots,b-1$, the map $\delta_{i}$ is defined by
\begin{equation}\label{eq:Deltas}
\delta_{0}= \overline{\nabla \lambda_{0}} - \eta_{1}\circ \overline{\lambda}_{1}\circ \overline{\nabla \lambda_{0}}, \qquad
\delta_{i}=\overline{\lambda}_{1}-\eta_{i+1}\circ (\overline{\lambda}_{1})^{2}, \quad \forall i>0,
\end{equation}
where $\eta_{b}=0$ and 
$\eta_{i}:K^{i+1}|_{U}\to K^{i}|_{U}$ for $i=0,1,\ldots,b-1$ is a bundle map satisfying
\begin{equation}\label{eq:Etas}
\eta_{i}\circ \eta_{i+1}=0, \qquad \eta_{0}\circ \delta_{0}=\id_{K^{0}|_{U}},\qquad  \delta_{i}\circ \eta_{i} + \eta_{i+1}\circ \delta_{i+1}=\id_{K^{i+1}|_{U}}.
\end{equation}
\end{lemma}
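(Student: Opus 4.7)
The plan is to construct the homotopy operators $\eta_i$ pointwise at the classical point $p$ using the split-exactness of~\eqref{eq:SESp}, extend them smoothly to a neighborhood, and then correct them so that the identities~\eqref{eq:Etas} hold identically on some open set $U \ni p$.

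First, since~\eqref{eq:SESp} is an exact sequence of finite-dimensional vector spaces, it is split exact. Hence one can choose linear maps $\eta_i^p \colon K^{i+1}|_p \to K^i|_p$ for $i = 0, 1, \ldots, b-1$ (with $\eta_b^p = 0$) satisfying the contracting-homotopy relations $\eta_i^p \circ \eta_{i+1}^p = 0$, $\eta_0^p \circ \overline{D_p\lambda_0} = \id$, and $\overline{\lambda}_1|_p \circ \eta_i^p + \eta_{i+1}^p \circ \overline{\lambda}_1|_p = \id$. These splittings can be constructed inductively from the top of the complex, using at each step that the relevant map is surjective and choosing complements to its kernel. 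Then extend each $\eta_i^p$ to a smooth bundle map $\eta_i$ on some open neighborhood of $p$, and define $\delta_i$ by the formulas~\eqref{eq:Deltas}. A short calculation using the curved $L_\infty[1]$ relations~\eqref{eq:cL-infty} at the point $p$, where $\lambda_0(p)=0$, yields $\overline{\lambda}_1|_p^2 = 0$ and $\overline{\lambda}_1|_p \circ \overline{D_p\lambda_0} = 0$; consequently $\delta_i|_p = \overline{\lambda}_1|_p$ for $i \geq 1$ and $\delta_0|_p = \overline{D_p\lambda_0}$, so all three identities~\eqref{eq:Etas} are satisfied on the fiber over $p$.

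The next step is to propagate the identities from the single point $p$ to a genuine neighborhood. The left-hand sides in~\eqref{eq:Etas} are smooth bundle endomorphisms that coincide with the identity at $p$; by openness of invertibility for matrices, they remain invertible on a (possibly smaller) neighborhood $U$ of $p$. Composing each $\eta_i$ with the inverse of the appropriate bundle endomorphism, and proceeding cascadingly from $i = b-1$ down to $i = 0$, yields corrected maps for which the three relations of~\eqref{eq:Etas} hold identically on $U$. The specific form of~\eqref{eq:Deltas}---where $\delta_i$ depends only on $\eta_{i+1}$ and the original data $\overline{\lambda}_1$, $\overline{\nabla \lambda_0}$---is tailored precisely so that this top-down correction does not disturb relations already established at higher indices.

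The hard part will be the bookkeeping: one must organise the cascading correction so that the three conditions in~\eqref{eq:Etas} can be satisfied simultaneously, taking into account how each adjustment of $\eta_{i+1}$ feeds back into $\delta_i$ via~\eqref{eq:Deltas}. Granted this, the chain-complex condition $\delta_{i+1} \circ \delta_i = 0$ becomes automatic: it follows by a direct algebraic manipulation of~\eqref{eq:Deltas} together with the identities $\eta_i \eta_{i+1} = 0$ and $\delta \eta + \eta \delta = \id$. The existence of such a contracting homotopy then implies that the sequence~\eqref{eq:DeltaSES} is split exact over $U$, completing the proof.
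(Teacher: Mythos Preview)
Your overall strategy---build a contracting homotopy at $p$, extend, and then propagate the identities to a neighbourhood by a top-down induction---matches the paper's, but the correction step you describe has a genuine gap. You assert that ``the left-hand sides in~\eqref{eq:Etas} are smooth bundle endomorphisms that coincide with the identity at $p$'' and can therefore be inverted; this is true for $\eta_0\delta_0$ and for $\delta_i\eta_i+\eta_{i+1}\delta_{i+1}$, but the first relation is $\eta_i\eta_{i+1}=0$, whose left-hand side equals \emph{zero} at $p$, not the identity. Composing $\eta_i$ with an invertible endomorphism cannot force $\eta_i\eta_{i+1}$ to vanish on $U$; what is needed instead is to precompose $\eta_i$ with the \emph{projection} $\id-\eta_{i+1}\delta_{i+1}$ (which kills the image of $\eta_{i+1}$ once $\delta_{i+1}\eta_{i+1}\eta_{i+2}=0$ and $\delta_{i+1}\eta_{i+1}+\eta_{i+2}\delta_{i+2}=\id$ are already in place). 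You explicitly defer this as ``bookkeeping'', but it is the crux of the argument, and your stated mechanism does not handle it.

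The paper avoids the extend-then-correct dance altogether. Instead of choosing $\eta_i$ at $p$ and fixing them up, it proves directly that $\delta_r$ (defined by~\eqref{eq:Deltas} using the already-constructed $\eta_{r+1}$) satisfies $\delta_{r+1}\delta_r=0$ and is surjective onto $\ker\delta_{r+1}$ at $p$, hence on a neighbourhood; then $\eta_r$ is chosen as a splitting of the short exact sequence $0\to\ker\delta_r\to K^r\to\ker\delta_{r+1}\to 0$, postcomposed with the projection $\id-\eta_{r+1}\delta_{r+1}$ to enforce $\eta_r\eta_{r+1}=0$. This gives all three relations of~\eqref{eq:Etas} at once, without any a-posteriori correction. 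Your sketch could be completed along these lines, but as written it does not yet constitute a proof.
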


\begin{proof}
Suppose that there exists an open neighbourhood $U$ of $p$ such that $\delta_{i}$ and $\eta_{i}$ satisfy Eqs.~\eqref{eq:Etas}. 
Then it is straightforward to check that~\eqref{eq:DeltaSES} is an exact sequence. Moreover, since the map $\nabla \lambda_{0}$ restricted to the fibre at $p$ satisfies
\[(\nabla \lambda_{0})|_{p}: X_{p}\mapsto D_{p}\lambda_{0}(X_{p})\]
for $X_{p}\in \tangentp{p}{M}$, by Eqs.~\eqref{eq:Deltas}, the maps $\delta_{i}$ restricted to the fibre at $p$ satisfy
\[\delta_{0}|_{p}=\overline{D_{p}\lambda_{0}}:K^{0}|_{p}\to K^{1}|_{p}, \qquad \delta_{i}|_{p}=\overline{\lambda}_{1}|_{p}:K^{i}|_{p}\to K^{i+1}|_{p}.\]
Therefore exact sequence~\eqref{eq:DeltaSES} extends exact sequence~\eqref{eq:SESp}.

We now construct an open neighbourhood $U$ of $p$ and bundle maps $\delta_{i}$, $\eta_{i}$
satisfying Eqs.~\eqref{eq:Deltas} and Eqs.~\eqref{eq:Etas}. 

Consider the bundle map $\overline{\lambda}_{1}:K^{b-1}\to K^{b}$. 
By the exactness of~\eqref{eq:SESp}, there exists an open neighbourhood $U$ of $p$ such that the induced bundle map 
\[\delta_{b-1}:=\overline{\lambda}_{1}|_{U}:K^{b-1}|_{U}\to K^{b}|_{U}\]
defined as in Eq.~\eqref{eq:Deltas} is surjective. This gives rise to an exact sequence of vector bundles:
\[\begin{tikzcd}
 0 \arrow{r}& \ker \delta_{b-1} \arrow{r}& K^{b-1}|_{U} \arrow{r}{\delta_{b-1}}& K^{b}|_{U} \arrow{r}& 0.
 \end{tikzcd}\]
We choose a splitting $\eta_{b-1}:K^{b}|_{U} \to K^{b-1}|_{U}$ of this exact sequence. 
These maps $\delta_{b-1}$ and $\eta_{b-1}$ satisfy Eqs.~\eqref{eq:Etas} for $i=b-1$ (where $\delta_{b}=0$).

We construct the maps $\delta_{i}$ and $\eta_{i}$ inductively starting from $i=b-1$ and proceeding downward to $i=0$.
Suppose we have already constructed bundle maps $\delta_{i}$ and $\eta_{i}$ for all $i>r$ over an open neighbourhood $\widetilde{U}$, 
satisfying Eqs.~\eqref{eq:Etas} for $i>r$. Our goal now is to construct $\delta_{r}$ and $\eta_{r}$ satisfying the same conditions. 
For notational simplicity, we suppress the restriction to $\widetilde{U}$ and treat all bundles and maps as if they are defined on $M$, with the understanding that the constructions are local near $p$.

Note that, by Eqs.~\eqref{eq:Deltas} and Eqs.~\eqref{eq:Etas} for $i=r+1$, we have
\[\delta_{r+1}=\delta_{r+1}\circ \eta_{r+1}\circ \delta_{r+1}=\delta_{r+1}\circ \eta_{r+1}\circ (\overline{\lambda}_{1}- \eta_{r+2}\circ (\overline{\lambda}_{1})^{2}) = \delta_{r+1}\circ \eta_{r+1}\circ \overline{\lambda}_{1}.\]
Together with $\delta_{r}$ defined as in Eqs.~\eqref{eq:Deltas}, we have
\[\delta_{r+1}\circ \delta_{r}=\delta_{r+1}\circ (\overline{\lambda}_{1}- \eta_{r+1}\circ (\overline{\lambda}_{1})^{2})=\delta_{r+1}\circ \overline{\lambda}_{1} - \delta_{r+1} \circ \overline{\lambda}_{1}=0,\]
which implies that $\image \delta_{r}\subset \ker \delta_{r+1}$. 
By the exactness of~\eqref{eq:SESp}, the map $\delta_{r}$, seen as a bundle map $\delta_{r}:K^{r}\to \ker \delta_{r+1}$, is surjective (isomorphism if $r=0$) in a neighbourhood $U$ of $p$. Moreover, over $U$, we obtain a short exact sequence of vector bundles
\[\begin{tikzcd}
 0\arrow{r}& \ker \delta_{r}|_{U} \arrow{r}& K^{r}|_{U} \arrow{r}{\delta_{r}|_{U}}&  \ker \delta_{r+1}|_{U} \arrow{r}& 0.
 \end{tikzcd}\]
We choose a splitting $\widetilde{\eta}_{r}: \ker \delta_{r+1}|_{U}\to K^{r}|_{U}$ of this sequence of vector bundles. 
Composition with a projection map
\[P_{r+1}:=\id_{K^{r+1}} - \eta_{r+1}\circ \delta_{r+1} : K^{r+1} \to \ker \delta_{r+1}\]
yields a bundle map over $U$
\[ \eta_{r}:=\widetilde{\eta}_{r}\circ P_{r+1}=\widetilde{\eta}_{r}-\widetilde{\eta}_{r}\circ \eta_{r+1}\circ \delta_{r+1} : K^{r+1}|_{U}\to K^{r}|_{U},\]
which, by a direct computation, satisfies Eqs.~\eqref{eq:Etas}. 
This completes the construction, proving the existence of $U$ and maps $\delta_{i}$, $\eta_{i}$ satisfying the required properties.
\end{proof}

Fix a classical point $p\in M$, and let $U$ be an open neighbourhood constructed in Lemma~\ref{lem:U-Exact}.
We now prove that the DG module $\ker \Phi_{\ast}|_{U}$ is acyclic. 
To simplify notation, we omit explicit restrictions to $U$: for instance, we write $K^{i}$ instead of $K^{i}|_{U}$, and $\sections{SL^{\vee}\tensor K^{j}}$ instead of $\sections{U; SL^{\vee}\tensor K^{j}}$.

Let $\kappa^{i}:=\ker \delta_{i}$ for $i=1,\ldots,b$ (with the convention that $\delta_{b}=0$).
For each $n=1,\ldots,b$, define the graded $\sections{SL^{\vee}}$-module
 \[\cE(n)=\sections{SL^{\vee}\tensor ( \bigoplus_{i=0}^{n-1}K^{i}\oplus \kappa^{n} )},\]
equipped with an $\sections{SL^{\vee}}$-module endomorphism
\[
\overline{D(n)}=(\overline{d(n)}_{ij})_{0\leq i,j \leq n} :\cE(n) \to \cE(n)
\]
of degree $+1$, where the component
\[
\overline{d(n)}_{ij}: \sections{SL^{\vee}\tensor K^{j}} \to \sections{SL^{\vee} \tensor K^{i}},
\]
where $K^{i}$ and $K^{j}$ are interpreted as $\kappa^{i}$ and $\kappa^{j}$ when $i=n$ or $j=n$, respectively, is defined as follows:
\begin{equation}\label{eq:dnij}
\overline{d(n)}_{ij} = \begin{cases}
\overline{d}_{nj} + \eta_{n}\circ \overline{d}_{n+1,n} \circ \overline{d}_{nj} & \text{if } i=n, \\
\overline{d}_{ij} &   \text{otherwise}
\end{cases} 
\end{equation}
where $\overline{d}_{ij}$ (often written as $\overline{d}_{i,j}$ for clarity) denotes the $(i,j)$-component of $\overline{D}$ in Lemma~\ref{lem:KerPhi} (see also Corollary~\ref{cor:KerPhi}), and $\eta_{i}$, by abuse of notation, denotes the $\sections{SL^{\vee}}$-linear extension of the bundle map $\eta_{i}$ from Lemma~\ref{lem:U-Exact}. 
Note that by Corollary~\ref{cor:KerPhi} and Eqs.~\eqref{eq:Deltas} and~\eqref{eq:Etas}, 
the $(n,j)$-component of $\overline{D(n)}$ satisfies:
\begin{equation}\label{eq:Dn}
\overline{d(n)}_{nj}  = \overline{d}_{nj} + \eta_{n}\circ \overline{d}_{n+1,n} \circ \overline{d}_{nj}
= \overline{d}_{nj}- \eta_{n}\circ \overline{\lambda}_{1} \circ \overline{d}_{nj} 
= \overline{d}_{nj}- \eta_{n}\circ \delta_{n} \circ \overline{d}_{nj} 
= \delta_{n-1}\circ \eta_{n-1}\circ \overline{d}_{nj},
\end{equation}
confirming that $\overline{D(n)}$ is a well-defined degree $+1$ endomorphism of $\cE(n)$. 

The following lemma shows that $\overline{D(n)}$ is in fact a differential and, moreover, describes the structure of the DG module $(\cE(n),\overline{D(n)})$.

\begin{lemma}\label{lem:En}
Let $U\subset M$ be the open neighbourhood of a classical point $p\in M$ constructed in Lemma~\ref{lem:U-Exact}, 
and define $(\cE(n),\overline{D(n)})$ as above for each $n=1,\ldots, b$. Then:
\begin{enumerate}
\item The pair $(\cE(n),\overline{D(n)})$ is, in fact, a DG $(\sections{SL^{\vee}},\lambda^{\transpose})$-module. \label{item:first}
\item For $n=b$, we have $(\cE(b), \overline{D(b)})=\ker \Phi_{\ast}|_{U}$. \label{item:second}
\item For each $n=2,\ldots,b$, there exists an isomorphism of DG $(\sections{SL^{\vee}},\lambda^{\transpose})$-modules:
\[ (\cE(n),\overline{D(n)}) \cong (\cE(n-1),\overline{D(n-1)})\oplus \mathcal{F}(n), \]
where $\mathcal{F}(n)$ is acyclic. 
In particular, the DG modules $(\cE(n),\overline{D(n)})$ are mutually quasi-isomorphic to each other.
\label{item:third}
\end{enumerate}
\end{lemma}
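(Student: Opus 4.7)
The plan is to handle the three claims in the order (2), (3), (1), with (3) providing the structural input that closes (1) by descending induction. Part (2) is a direct unwinding of definitions: since $\eta_b = 0$ and $\delta_b = 0$ in Lemma~\ref{lem:U-Exact}, the correction term $\eta_n \circ \overline{d}_{n+1,n} \circ \overline{d}_{nj}$ in Eq.~\eqref{eq:dnij} vanishes for $n = b$, giving $\overline{d(b)}_{ij} = \overline{d}_{ij}$ for all $i,j$, and $\kappa^b = K^b$. Thus $(\cE(b), \overline{D(b)})$ coincides with $\ker\Phi_\ast|_U$ as described in Lemma~\ref{lem:KerPhi}, which establishes both (2) and the base case $\overline{D(b)}^2 = 0$ of (1).

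For Part (3), the splitting is built from the contracting-homotopy data $(\delta_i, \eta_i)$ of Lemma~\ref{lem:U-Exact}. The identity $\delta_{n-1}\eta_{n-1} + \eta_n\delta_n = \id$, restricted to $\kappa^n = \ker\delta_n$, yields $\delta_{n-1}\eta_{n-1}|_{\kappa^n} = \id_{\kappa^n}$, so $\eta_{n-1}|_{\kappa^n}$ is a section of $\delta_{n-1}$; this produces a vector bundle decomposition
\[
K^{n-1} \cong \kappa^{n-1} \oplus \eta_{n-1}(\kappa^n),
\]
with $\kappa^{n-1} = \ker\delta_{n-1}$ as the direct complement. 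Consequently I would obtain a graded $\sections{SL^{\vee}}$-module decomposition of $\cE(n)$ into the underlying graded module of $\cE(n-1)$ together with $\mathcal{F}(n) := \sections{SL^{\vee}\tensor (\eta_{n-1}(\kappa^n)\oplus \kappa^n)}$.

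To upgrade this to a DG decomposition, I would construct an $\sections{SL^{\vee}}$-linear, block-triangular automorphism $\Theta$ of $\cE(n)$ whose off-diagonal blocks involve $\eta_{n-1}$ pre-composed with the relevant entries $\overline{d}_{i,n-1}$ and $\overline{d}_{in}$, and show that $\Theta^{-1}\overline{D(n)}\Theta = \overline{D(n-1)}\oplus d_{\mathcal{F}(n)}$. The enabling identity is Eq.~\eqref{eq:Dn}: the equation $\overline{d(n)}_{nj} = \delta_{n-1}\eta_{n-1}\overline{d}_{nj}$ says that the entire $n$-th row of $\overline{D(n)}$ factors through $\delta_{n-1}$, landing in $\kappa^n$ via the isomorphism $\delta_{n-1}\colon \eta_{n-1}(\kappa^n)\xrightarrow{\sim}\kappa^n$. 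This is precisely what allows the acyclic summand to separate cleanly. The differential $d_{\mathcal{F}(n)}$ then has leading component $\id\tensor \delta_{n-1}$ (together with $\lambda^{\transpose}\tensor \id$); since $\delta_{n-1}$ is a vector bundle isomorphism between the two summands, $\mathcal{F}(n)$ is the tensor product of $(\sections{SL^{\vee}},\lambda^{\transpose})$ with an acyclic two-term complex of projective $\smooth{M}$-modules, hence acyclic (either by a Künneth-type argument or by Acyclic Assembly applied to the filtration from Proposition~\ref{prop:BaseStr}). With (3) in hand, Part (1) follows by descending induction: assuming $\overline{D(n)}^2 = 0$, the splitting $\Theta^{-1}\overline{D(n)}\Theta = \overline{D(n-1)}\oplus d_{\mathcal{F}(n)}$ forces $\overline{D(n-1)}^2 = 0$ (and $d_{\mathcal{F}(n)}^2 = 0$).

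The main obstacle is verifying the intertwining relation itself. The cancellations needed to kill the cross-terms generated by $\overline{d}_{i,n-1}$ and $\overline{d}_{in}$ rely on the identities $\eta_n\eta_{n+1} = 0$, $\delta_{n-1}\eta_{n-1} + \eta_n\delta_n = \id$, and the derived identity $\eta_n\delta_n = \eta_n\overline{\lambda}_1$, which follows from $\delta_n = \overline{\lambda}_1 - \eta_{n+1}(\overline{\lambda}_1)^2$ (Eq.~\eqref{eq:Deltas}) together with $\eta_n\eta_{n+1} = 0$. Together with careful bookkeeping of how the block components decompose under $K^{n-1} \cong \kappa^{n-1}\oplus \eta_{n-1}(\kappa^n)$, these identities should match the surviving diagonal blocks with $\overline{D(n-1)}$ (as defined by Eq.~\eqref{eq:dnij} at index $n-1$) and with $d_{\mathcal{F}(n)}$, respectively.
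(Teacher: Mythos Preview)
Your plan is correct and follows essentially the same route as the paper. The paper's automorphism is $\alpha = \id - \sum_{j=0}^{n-2}\alpha_{n-1,j}$ with $\alpha_{n-1,j} = \eta_{n-1}\circ \overline{d}_{nj}$ (so the relevant off-diagonal entries are $\overline{d}_{nj}$, not $\overline{d}_{i,n-1}$ or $\overline{d}_{in}$ as you wrote); after conjugation one checks $\overline{d(n)}^{\alpha}_{nj}=0$ and $\overline{d(n)}^{\alpha}_{n-1,j}=\overline{d(n-1)}_{n-1,j}$ for $j<n-1$, and the acyclic summand $\mathcal{F}(n)=\sections{SL^{\vee}\tensor(\eta_{n-1}(\kappa^{n})\oplus\kappa^{n})}$ is exactly yours, identified as the cone on an isomorphism as you describe.
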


\begin{proof}

We prove assertions~\eqref{item:second} and~\eqref{item:third}, from which assertion~\eqref{item:first} follows by induction using Lemma~\ref{lem:KerPhi}.

By Lemma~\ref{lem:KerPhi}, assertion~\eqref{item:second} follows immediately, provided that $\overline{d(b)}_{bj}=\overline{d}_{bj}$, 
which follows by applying Eqs.~\eqref{eq:Etas} and~\eqref{eq:Dn} directly.

It remains to prove assertion~\eqref{item:third}. 
Define an automorphism $\alpha:=\id - \sum_{j=0}^{n-2} \alpha_{n-1,j}$ of the graded $\sections{SL^{\vee}}$-module $\cE(n)$ where, for each $0\leq j \leq n-2$, the map
\[\alpha_{n-1,j}:\sections{SL^{\vee}\tensor K^{j}} \to \sections{SL^{\vee}\tensor K^{n-1}},\]
viewed as an endomorphism of $\cE(n)$,
is the graded $\sections{SL^{\vee}}$-linear map defined by
\[\alpha_{n-1,j}(k_{j})=\eta_{n-1}\circ \overline{d}_{nj}(k_{j})\]
for any $k_{j} \in \sections{K^{j}} \subset \sections{SL^{\vee}\tensor K^{j}}$.
Indeed, the inverse of $\alpha$ is given by $\alpha^{-1}=\id + \sum_{j=0}^{n-2}\alpha_{n-1,j}$.

Define $\overline{D(n)}^{\alpha}:=\alpha\circ \overline{D(n)}\circ \alpha^{-1}$. Then $\alpha$ is an isomorphism of DG $(\sections{SL^{\vee}},\lambda^{\transpose})$-modules
\[ \alpha: (\cE(n),\overline{D(n)}) \to (\cE(n),\overline{D(n)}^{\alpha}) .\]

We first show the following:
\begin{claim}\label{claim:1}
 Via the natural inclusion $\cE(n-1)\subset \cE(n)$, the submodule $\cE(n-1)$ is invariant under the differential $\overline{D(n)}^{\alpha}$, and the restriction $\overline{D(n)}^{\alpha}|_{\cE(n-1)}$ agrees with $\overline{D(n-1)}$.
\end{claim}

\begin{proof}[Proof of Claim~\ref{claim:1}]
Note that, by Corollary~\ref{cor:KerPhi} and Eqs.~\eqref{eq:Deltas} and~\eqref{eq:Etas}, we have
\begin{equation}\label{eq:eded}
\eta_{i}\circ \delta_{i}=-\eta_{i}\circ \overline{d}_{i+1,i}, \qquad \delta_{i}\circ \eta_{i}\circ \delta_{i}=\delta_{i}
\end{equation}
for each $i=0,1,\ldots,b-1$.
These identities allow explicit computation of $(i,j)$-component $\overline{d(n)}_{ij}^{\alpha}$ of $\overline{D(n)}^{\alpha}$.

First, for $j<n-1$, we have
\begin{align*}
\overline{d(n)}_{nj}^{\alpha} &= \overline{d(n)}_{nj}+\overline{d(n)}_{n,n-1} \circ \alpha_{n-1,j} \\
& = \delta_{n-1}\circ \eta_{n-1}\circ \overline{d}_{nj}+ \delta_{n-1}\circ \eta_{n-1}\circ \overline{d}_{n,n-1}\circ \eta_{n-1}\circ \overline{d}_{nj}\\
&=\delta_{n-1}\circ \eta_{n-1} \circ \overline{d}_{nj} - \delta_{n-1}\circ \eta_{n-1} \circ \delta_{n-1}\circ \eta_{n-1} \circ \overline{d}_{nj}
 = 0,
 \end{align*}
where the first equality follows from $\overline{d(n)}_{ij}=\overline{d}_{ij}=0$ for all $i<j$ (Corollary~\ref{cor:KerPhi}), and the second equality holds by Eq.~\eqref{eq:dnij}.

Observe that, since $\overline{D}$ is a differential, the relation $\overline{D}^{2}=0$ implies that, for each $i>j$, 
\[
 \sum_{k=j}^{i} \overline{d}_{ik} \circ \overline{d}_{kj}=0.
\]
Moreover, since $\eta_{i}$ is a $\sections{SL^{\vee}}$-linear extension of a bundle map $K^{i+1}\to K^{i}$, together with Corollary~\ref{cor:KerPhi}, we have
\[
\overline{d}_{ii}\circ \eta_{i}+\eta_{i}\circ \overline{d}_{i+1,i+1}=0
\]
for each $i=0,1,\ldots,b-1$.
Combining with these equations, again for $j<n-1$, we have
\begin{align*}
\overline{d(n)}_{n-1,j}^{\alpha} 
&= \overline{d(n)}_{n-1,j }+ \overline{d(n)}_{n-1,n-1}\circ \alpha_{n-1,j} - \sum_{k=j}^{n-2} \alpha_{n-1,k}\circ \overline{d(n)}_{kj} \\
&= \overline{d}_{n-1,j}+ \overline{d}_{n-1,n-1}\circ \eta_{n-1}\circ \overline{d}_{nj} -\sum_{k=j}^{n-2}\eta_{n-1}\circ \overline{d}_{nk}\circ \overline{d}_{kj}\\
&=\overline{d}_{n-1,j}+ (\overline{d}_{n-1,n-1}\circ \eta_{n-1}\circ \overline{d}_{nj} +\eta_{n-1}\circ \overline{d}_{n,n}\circ \overline{d}_{n,j} )+ \eta_{n-1}\circ \overline{d}_{n,n-1}\circ \overline{d}_{n-1,j}\\
&=\overline{d}_{n-1,j}+\eta_{n-1}\circ \overline{d}_{n,n-1}\circ \overline{d}_{n-1,j}\\
&=\overline{d(n-1)}_{n-1,j},
\end{align*}
where the first equality follows from $\overline{d(n)}_{ij}=\overline{d}_{ij}=0$ for all $i<j$ (Corollary~\ref{cor:KerPhi}), and the last equality follows from Eq.~\eqref{eq:dnij}.
Since, by definition, the image of $\overline{d(n-1)}_{n-1,j}$ lies in $\cE(n-1)$ for all $j$, so does the image of $\overline{d(n)}_{n-1,j}^{\alpha}$.

Finally, for $j=n-1$, Corollary~\ref{cor:KerPhi} shows that 
$\overline{d(n)}_{i,n-1}=0$ for $i<n-1$ and 
\[\overline{d(n)}_{n-1,n-1}=\lambda^{\transpose}\tensor \id:\sections{SL^{\vee}\tensor K^{n-1}} \to \sections{SL^{\vee}\tensor K^{n-1}}.\]
Since, by Eqs.~\eqref{eq:Etas}, the operator 
\[\delta_{n-2}\circ \eta_{n-2}:\sections{SL^{\vee}\tensor K^{n-1}}\to \sections{SL^{\vee}\tensor K^{n-1}}\]
is the projection to $\sections{SL^{\vee}\tensor \kappa^{n-1}}$, and together with Eq.~\eqref{eq:Dn}, we have 
\[\overline{d(n-1)}_{n-1,n-1}(\xi\tensor k_{n-1})=\delta_{n-2}\circ \eta_{n-2}(\lambda^{\transpose}(\xi)\tensor k_{n-1})=\lambda^{\transpose}(\xi)\tensor k^{j} = \overline{d(n)}_{n-1,n-1}(\xi\tensor k_{n-1})\]
for all $\xi\tensor k_{n-1} \in \sections{SL^{\vee}\tensor\kappa^{n-1}}$.

Hence, $\overline{D(n)}|_{\cE(n-1)}=\overline{D(n-1)}$, completing Claim~\ref{claim:1}.
\end{proof}

Next, it remains to find an acyclic DG module to complete the isomorphism in assertion~\eqref{item:third}. To do so, for $n=1,\ldots,b$, we use Eqs.~\eqref{eq:Etas} to identify
\[\sections{SL^{\vee}\tensor K^{n-1}}\cong \sections{SL^{\vee}\tensor(\kappa^{n-1}\oplus \kappa^{n}[1])},\]
where the symbol $[1]$ denotes the degree shift by $1$. 
This yields an identification (where $\cE(0):=0)$
\begin{equation}\label{eq:IdentifycEn}
\cE(n)\cong \cE(n-1)\oplus \sections{SL^{\vee}\tensor(\kappa^{n}[1]\oplus \kappa^{n})},
\end{equation}
which identifies a homogeneous element $\xi\tensor k_{n}[1] \in \sections{SL^{\vee}\tensor\kappa^{n}[1]}$ with an element 
\[\xi\tensor \eta_{n-1}(k_{n})=(-1)^{\degree{\xi}}\eta_{n-1}(\xi\tensor k_{n})\in \sections{SL^{\vee}\tensor K^{n-1}}\subset \cE(n),\]
where the sign $(-1)^{\degree{\xi}}$ arises from the Koszul rule.

Observe that, when restricted to $\sections{SL^{\vee}\tensor (K^{n-1}\oplus \kappa^{n})}$, both differentials $\overline{D(n)}$ and $\overline{D(n)}^{\alpha}$ are identical. 
Moreover, under the identification~\eqref{eq:IdentifycEn}, 
Corollary~\ref{cor:KerPhi} implies that
$\sections{SL^{\vee}\tensor(\kappa^{n}[1]\oplus \kappa^{n})}$ is invariant under the differential $\overline{D(n)}$. 
Thus, 
\begin{equation}\label{eq:Fn}
\mathcal{F}(n):= (\sections{SL^{\vee}\tensor(\kappa^{n}[1]\oplus \kappa^{n})}, \overline{D(n)}\big|_{\sections{SL^{\vee}\tensor(\kappa^{n}[1]\oplus \kappa^{n})}})
\end{equation}
is a DG module.

\begin{claim}\label{claim:2}
For each $n=1,\ldots,b$, 
the DG module $\mathcal{F}(n)$ is acyclic.
\end{claim}
\begin{proof}[Proof of Claim~\ref{claim:2}]
By Corollary~\ref{cor:KerPhi}, for all $i=0,1,\ldots,n$, we have $\overline{d(n)}_{ii}=\lambda^{\transpose}\tensor \id$. Note that, when $i=n$, the space $\sections{SL^{\vee}\tensor \kappa^{n}}$ is invariant under the map $\lambda^{\transpose}\tensor \id$.

Next, by Eqs.~\eqref{eq:Dn} and~\eqref{eq:eded}, we have
\begin{multline*}
\overline{d(n)}_{n,n-1}(\eta_{n-1}(k_{n}))= (-1)^{\degree{\xi}} \delta_{n-1}\circ \eta_{n-1}\circ \overline{d}_{n,n-1}\circ \eta_{n-1}(\xi\tensor k_{n})\\
=(-1)^{\degree{\xi}+1} \delta_{n-1}\circ \eta_{n-1}(\xi\tensor k_{n})=(-1)^{\degree{\xi}+1} \xi\tensor k_{n}
\end{multline*}
for homogeneous $\xi\tensor k_{n}\in \sections{SL^{\vee}\tensor \kappa^{n}}$. 
Since, by Eqs.~\eqref{eq:Etas}, we have $k_{n}=\delta_{n-1}\circ \eta_{n-1}(k_{n})\in \sections{\kappa^{n}}$ for $k_{n}\in \sections{\kappa^{n}}$, the last equality holds.

Under the identification~\eqref{eq:IdentifycEn}, 
the differential $\overline{D}$ restricted to $\sections{SL^{\vee}\tensor (\kappa^{n}[1] \oplus \kappa^{n})}$ is represented by a matrix:
\[\overline{D(n)}\big|_{\sections{SL^{\vee}\tensor(\kappa^{n}[1]\oplus \kappa^{n})}}=
\begin{bmatrix}
\lambda^{\transpose}\tensor \id & 0 \\
\widetilde{\id} & \lambda^{\transpose}\tensor \id
\end{bmatrix}
\]
where $\widetilde{\id}\in \End\big(\sections{SL^{\vee}\tensor \kappa^{n}}\big)$ is defined by
$\widetilde{\id}(\xi\tensor k_{n})=(-1)^{\degree{\xi}+1}\xi\tensor k_{n}$
for each homogeneous element $\xi\tensor k_{n} \in \sections{SL^{\vee}\tensor \kappa^{n}}$. 
Therefore, one can view the DG module $\mathcal{F}(n)$ as a mapping cone of an isomorphism $\widetilde{\id}$ of chain complexes (with a careful consideration on signs), inducing acyclicity of $\mathcal{F}(n)$ by~\cite[Corollary~1.5.4]{MR1269324}. 
This completes the proof of Claim~\ref{claim:2}.
\end{proof}

Combining Eq.~\eqref{eq:IdentifycEn}, Claim~\ref{claim:1}, and Claim~\ref{claim:2}, we obtain assertion~\eqref{item:third}, which completes the proof of the lemma.
\end{proof}

\begin{corollary}\label{cor:Classical}
Let $\Phi=(f,\phi_{1}):(M,L,\lambda)\to (N,E,\mu)$ be an acyclic linear fibration in the category $\cLinfCat$, 
where $f:M\to N$ and $\phi_{1}:L\to E$. 
Then, for each classical point $p\in M$, there exists an open neighbourhood $U$ of $p$ such that the DG module
$\ker \Phi_{\ast}|_{U}$ as in Eq.~\eqref{eq:KerPhiU} is acyclic.
\end{corollary}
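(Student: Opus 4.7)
The plan is to exploit the machinery already built in Lemmas~\ref{lem:U-Exact} and~\ref{lem:En}: the former supplies the local splittings around the classical point, and the latter peels $\ker\Phi_{\ast}|_{U}$ apart into manifestly acyclic pieces. The strategy is a downward induction on $n$ that reduces the problem from $\cE(b)=\ker\Phi_{\ast}|_{U}$ to $\cE(1)$, at which point the $r=0$ clause of Lemma~\ref{lem:U-Exact} collapses the base case.

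First I would apply Lemma~\ref{lem:U-Exact} at the given classical point $p$ to obtain an open neighbourhood $U$ of $p$ and bundle maps $\delta_{0},\ldots,\delta_{b-1}$ and $\eta_{0},\ldots,\eta_{b-1}$ satisfying Eqs.~\eqref{eq:Deltas} and~\eqref{eq:Etas}. By Lemma~\ref{lem:En}~\eqref{item:second}, $\ker\Phi_{\ast}|_{U}=(\cE(b),\overline{D(b)})$. Iterating the decomposition of Lemma~\ref{lem:En}~\eqref{item:third} downward from $n=b$ to $n=2$ yields an isomorphism of DG $(\sections{SL^{\vee}},\lambda^{\transpose})$-modules
\[
\ker\Phi_{\ast}|_{U} \;\cong\; (\cE(1),\overline{D(1)}) \;\oplus\; \bigoplus_{n=2}^{b}\mathcal{F}(n),
\]
in which each $\mathcal{F}(n)$ on the right-hand side is acyclic by Claim~\ref{claim:2}. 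Thus the problem reduces to the acyclicity of $(\cE(1),\overline{D(1)})$.

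For the base case, the decisive point is the $r=0$ clause of Lemma~\ref{lem:U-Exact}: the map $\delta_{0}:K^{0}\to \kappa^{1}:=\ker\delta_{1}$ is an \emph{isomorphism} of vector bundles over $U$, not merely a surjection, so $\kappa^{0}=\ker\delta_{0}=0$ and $\eta_{0}$ is a genuine inverse of $\delta_{0}$ on $\kappa^{1}$ by Eqs.~\eqref{eq:Etas}. Consequently the identification~\eqref{eq:IdentifycEn} applied at $n=1$ degenerates, since $\cE(0):=0$, to
\[
\cE(1)\;\cong\;\mathcal{F}(1),
\]
and Claim~\ref{claim:2}, which is proved uniformly for all $n=1,\ldots,b$, directly gives the acyclicity of $\mathcal{F}(1)$ via the mapping-cone-of-identity argument.

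I do not expect any substantive obstacle beyond what is already addressed: the genuinely delicate ingredients, namely the lower-semi-continuity construction producing the exact sequence~\eqref{eq:DeltaSES} extending~\eqref{eq:SESp} and the change-of-basis automorphism $\alpha$ yielding the decomposition $\cE(n)\cong \cE(n-1)\oplus \mathcal{F}(n)$, have already been absorbed into Lemmas~\ref{lem:U-Exact} and~\ref{lem:En}. The present corollary is essentially the terminal bookkeeping step that carries the induction all the way down to $n=1$, at which level the fibrewise exactness of~\eqref{eq:SESp} at $K^{0}|_{p}$ forces $\delta_{0}$ to be an isomorphism and the remaining complex to be a contractible mapping cone.
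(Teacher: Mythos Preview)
Your proposal is correct and follows essentially the same approach as the paper's own proof: reduce $\ker\Phi_{\ast}|_{U}=(\cE(b),\overline{D(b)})$ to $(\cE(1),\overline{D(1)})$ via the iterated decomposition of Lemma~\ref{lem:En}, then identify $(\cE(1),\overline{D(1)})\cong\mathcal{F}(1)$ through~\eqref{eq:IdentifycEn} and invoke Claim~\ref{claim:2}. The paper's proof is simply a terser version of exactly this argument.
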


\begin{proof}
  Combining Lemma~\ref{lem:U-Exact} and Lemma~\ref{lem:En}, the DG module $\ker \Phi_{\ast}|_{U}$ is acyclic if $(\cE(1),\overline{D(1)})$ is acyclic. Under the identification~\eqref{eq:IdentifycEn}, we have $\mathcal{F}(1)\cong (\cE(1),\overline{D(1)})$. The corollary follows by Claim~\ref{claim:2} of the proof of Lemma~\ref{lem:En}.
\end{proof}

We finally prove Proposition~\ref{prop:PsiMap}.
\begin{proof}[Proof of Proposition~\ref{prop:PsiMap}]
By Lemma~\ref{lem:Equiv}, it suffices to show that $\ker \Phi_{\ast}$, which corresponds to $\ker \Psi_{\ast}$, is acyclic. By Corollary~\ref{cor:Locality}, it further suffices to show that, for each $p\in M$, there exists an open neighbourhood $U$ of $p$ such that $\ker \Phi_{\ast}|_{U}$ is acyclic.

Each point $p\in M$ is either classical or non-classical. If $p$ is non-classical, then by Lemma~\ref{lem:NonClassical}, there exists an open neighbourhood $U$ of $p$ such that $\ker \Phi_{\ast}|_{U}$ is acyclic. If $p$ is classical, the same conclusion follows from Corollary~\ref{cor:Classical}.

Therefore, $\ker \Phi_{\ast}$ is acyclic, and hence so is $\ker \Psi_{\ast}$. This completes the proof of Proposition~\ref{prop:PsiMap}.
\end{proof}

\subsection{Case of $(p,q)$-tensor bundles}\label{subsec:TensorBundles}
We show that the analogue of Theorem~\ref{thm:main1} holds for cotangent bundles and moreover, for the $(p,q)$-tensor bundles. 

\subsubsection{Cotangent bundles}
We begin with the cotangent bundle.

\begin{theorem} \label{thm:main2}
Let $\Psi:(\cM,Q)\to (\cN,R)$ be a weak equivalence between DG manifolds of positive amplitude. Then the cochain complexes of differential forms $(\Omega(\cM),\liederivative{Q})$ and $(\Omega(\cN),\liederivative{R})$ are quasi-isomorphic.
\end{theorem}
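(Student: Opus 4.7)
The plan is to mirror the proof of Theorem~\ref{thm:main1}, replacing the tangent bundle with the cotangent bundle throughout. By the cotangent analogue of Lemma~\ref{lem:ALF} (see Remark~\ref{rem:ALF}), it suffices to establish the claim for an acyclic linear fibration $\Psi:(\cM,Q)\to (\cN,R)$, corresponding to $\Phi=(f,\phi_{1}):(M,L,\lambda)\to (N,E,\mu)$. The pullback $\Psi^{\ast}:\Omega(\cN)\to\Omega(\cM)$ factors as
\[
\begin{tikzcd}
(\Omega(\cN),\liederivative{R}) \arrow{r}{I} & (\sections{\Psi^{\ast}\cotangent{\cN}},\liederivative{Q,R}) \arrow{r}{\widehat{\Psi}} & (\Omega(\cM),\liederivative{Q})
\end{tikzcd}
\]
where $I:\omega\mapsto 1\tensor\omega$ under the identification $\sections{\Psi^{\ast}\cotangent{\cN}}\cong \smooth{\cM}\tensor_{\smooth{\cN}}\Omega(\cN)$, and $\widehat{\Psi}$ is the $\sections{SL^{\vee}}$-linear dual of $\Psi_{\ast}$ via Proposition~\ref{prop:Cotangent}.

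The map $I$ is a quasi-isomorphism by Corollary~\ref{cor:KeyLem2} applied to the DG vector bundle $(\cotangent{\cN},\liederivative{R})$, exactly as in Proposition~\ref{prop:IMap}. For the second map, since $\Psi_{\ast}$ is surjective for a linear fibration (cf.\ proof of Lemma~\ref{lem:Equiv}), its $\sections{SL^{\vee}}$-linear dual $\widehat{\Psi}$ is injective. Choosing the connection from Eq.~\eqref{eq:NablaL} as in Corollary~\ref{cor:Ker1} yields a short exact sequence of DG $(\sections{SL^{\vee}},\lambda^{\transpose})$-modules
\[0 \to \sections{\Psi^{\ast}\cotangent{\cN}} \xrightarrow{\widehat{\Psi}} \Omega(\cM) \to \coker\widehat{\Psi} \to 0,\]
where $\coker\widehat{\Psi}\cong \sections{SL^{\vee}\tensor ((\ker f_{\ast})^{\vee}\oplus (\ker \phi_{1})^{\vee})}$ equipped with the $\sections{SL^{\vee}}$-linear dual of the differential $\overline{D}$ of Lemma~\ref{lem:KerPhi}. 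Thus $\widehat{\Psi}$ is a quasi-isomorphism if and only if $\coker\widehat{\Psi}$ is acyclic.

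To establish acyclicity of $\coker\widehat{\Psi}$, I would dualize the local analysis developed for $\ker\Phi_{\ast}$ in Lemmas~\ref{lem:U-Exact} and~\ref{lem:En}. The crucial observation is that those arguments establish not merely acyclicity but \emph{contractibility} of $\ker\Phi_{\ast}|_{U}$: the bundle maps $\delta_{i}$ together with the homotopies $\eta_{i}$ exhibit $\ker\Phi_{\ast}|_{U}$ as an iterated mapping cone of identity maps (as in Claim~2 of the proof of Lemma~\ref{lem:En}), which admits an explicit chain contraction. Since the underlying modules are sections of finite-rank vector bundles, this contraction transposes to an explicit contraction of the $\sections{SL^{\vee}}$-linear dual, proving acyclicity of $(\coker\widehat{\Psi})|_{U}$ near classical points. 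Non-classical points are handled verbatim as in Lemma~\ref{lem:NonClassical} (which applies to any DG vector bundle over $(\cM|_{U},Q|_{U})$, in particular to the dual), and locality (Corollary~\ref{cor:Locality}) yields global acyclicity. The main obstacle is the bookkeeping of transposing the maps $\delta_{i}$, $\eta_{i}$ and the decomposition $(\cE(n),\overline{D(n)})\cong (\cE(n-1),\overline{D(n-1)})\oplus \mathcal{F}(n)$ so that the downward induction of Lemma~\ref{lem:En} transports verbatim to the dual setting; once this is carried out, the rest of the argument is a direct analogue of the tangent case.
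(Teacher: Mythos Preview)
Your proposal is correct and follows essentially the same route as the paper: reduce to acyclic linear fibrations via Remark~\ref{rem:ALF}, factor through $(\sections{\Psi^{\ast}\cotangent{\cN}},\liederivative{Q,R})$, handle $I$ by Corollary~\ref{cor:KeyLem2}, and prove acyclicity of the cokernel by dualising the local decomposition of Lemma~\ref{lem:En} (together with Lemma~\ref{lem:NonClassical} and the locality argument). The paper packages the dualisation slightly differently---it explicitly defines the transposed objects $\cE(-n)^{\transpose}$, $\overline{D(-n)}^{\transpose}$, $\delta_{-i}^{\transpose}$, $\eta_{-i}^{\transpose}$ and re-runs Lemma~\ref{lem:En} verbatim---whereas you phrase it as ``contractibility transposes''; these are the same argument, and your observation that the $\eta_{i}$ are bundle maps (hence dualise cleanly) is exactly what makes the bookkeeping go through.
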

 
Since $\Psi$ induces a morphism of vector bundles $\Psi_{\ast}:\tangent{\cM}\to \Psi^{\ast}\tangent{\cN}$ over the same graded manifold $\cM$, we obtain the map $\Psi^{\star}:\Psi^{\ast}\cotangent{\cN} \to \cotangent{\cM}$ on the dual bundles. 
The differentials $\liederivative{Q}$ and $\liederivative{Q,R}$ on $\XX(\cM)$ and $\sections{\Psi^{\ast}\tangent{\cN}}$ (see diagram~\eqref{eq:PairMorph}) induce, by taking their duals, differentials on $\Omega(\cM)$ and $\sections{\Psi^{\ast}\cotangent{\cN}}$, denoted by the same symbols $\liederivative{Q}$ and $\liederivative{Q,R}$, respectively. 
Together, we obtain a morphism of DG $(\smooth{\cM},Q)$-modules
 \begin{equation}\label{eq:CotangentPsi}
\Psi^{\star}:(\sections{\Psi^{\ast}\cotangent{\cN}},\liederivative{Q,R}) \to (\Omega(\cM),\liederivative{Q}).
\end{equation}
 Also, as in the tangent bundle case, there is a natural cochain map
\begin{equation}\label{eq:CotangentI}
I:(\Omega(\cN),\liederivative{R})\to (\sections{ \Psi^{\ast}\cotangent{\cN}},\liederivative{Q,R})
\end{equation}
defined by 
\[I(\omega)=1\tensor \omega \in \smooth{\cM}\tensor_{\smooth{\cN}}\Omega(\cN)\cong \sections{ \Psi^{\ast}\cotangent{\cN}}.\] 

Note that, by Remark~\ref{rem:ALF}, an analogous version of Lemma~\ref{lem:ALF} also holds for cotangent bundles. 
Theorem~\ref{thm:main2} then follows from this fact together with the following theorem.
\begin{theorem}\label{thm:main2-1}
Let $\Psi:(\cM,Q)\to (\cN,R)$ be a morphism of DG manifolds of positive amplitude. If $\Psi$ is an acyclic linear fibration, then the both maps \eqref{eq:CotangentPsi} and \eqref{eq:CotangentI} are quasi-isomorphisms.
\end{theorem}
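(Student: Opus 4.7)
The plan is to follow the template established by Theorem~\ref{thm:main1-1}, treating the two maps separately.

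For the map $I$: apply Corollary~\ref{cor:KeyLem2} to the DG vector bundle $(\cotangent{\cN},\liederivative{R})$ over $(\cN,R)$ together with the quasi-isomorphism $\Psi^{\ast}:(\smooth{\cN},R)\to(\smooth{\cM},Q)$. Under the natural identification $\smooth{\cM}\tensor_{\smooth{\cN}}\Omega(\cN)\cong \sections{\Psi^{\ast}\cotangent{\cN}}$, the resulting quasi-isomorphism coincides with $I$. This argument mirrors Proposition~\ref{prop:IMap} verbatim and, as noted in the remark following that proposition, requires only that $\Psi$ be a weak equivalence.

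For $\Psi^{\star}$: translate to the category $\cLinfCat$ via Proposition~\ref{prop:CatBundle} and Proposition~\ref{prop:Cotangent}. With the $\tangent{M}$-connection $\nabla^{L}$ from Eq.~\eqref{eq:NablaL}, the map $\Psi^{\star}$ corresponds to the $\sections{SL^{\vee}}$-linear dual $\Phi_{\ast}^{\transpose}$ of the morphism $\Phi_{\ast}$ in Lemma~\ref{lem:PhiStar}. Since the underlying fibrewise map is a surjection of vector bundles, dualising and tensoring with $SL^{\vee}$ (an exact operation on sections of vector bundles) yields a short exact sequence of DG $(\sections{SL^{\vee}},\lambda^{\transpose})$-modules
\[0 \to \sections{\Psi^{\ast}\cotangent{\cN}} \xto{\Psi^{\star}} \Omega(\cM) \to (\ker \Phi_{\ast})^{\vee} \to 0,\]
where $(\ker \Phi_{\ast})^{\vee}$ denotes the DG module arising from the fibrewise dual of the underlying graded vector bundle of $\ker \Phi_{\ast}$ (with transposed differential). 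By the standard long-exact-sequence argument (cf.~Lemma~\ref{lem:Equiv}), it suffices to prove that $(\ker \Phi_{\ast})^{\vee}$ is acyclic.

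For this last step, I would mirror the proof that $\ker \Phi_{\ast}$ is acyclic. The sheaf-theoretic reduction (Corollary~\ref{cor:Locality}) applies verbatim, and at non-classical points Corollary~\ref{cor:KeyLem3} gives acyclicity as in Lemma~\ref{lem:NonClassical}. At a classical point $p\in M$, dualise the exact sequence~\eqref{eq:DeltaSES} on a neighbourhood $U$ of $p$ produced by Lemma~\ref{lem:U-Exact} to obtain an exact sequence
\[0 \to (K^{b})^{\vee}|_{U} \xto{\delta_{b-1}^{\transpose}} (K^{b-1})^{\vee}|_{U} \to \cdots \to (K^{0})^{\vee}|_{U}\to 0\]
with contracting homotopies $\eta_{i}^{\transpose}$ satisfying the transposed form of Eqs.~\eqref{eq:Etas}. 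One then replays the construction of the filtration $(\cE(n),\overline{D(n)})$ of Lemma~\ref{lem:En}, with the acyclic quotients $\mathcal{F}(n)$ again appearing as mapping cones of identity isomorphisms (cf.~Claim~\ref{claim:2}). I expect the main technical obstacle to be verifying that the dual construction inherits a well-defined DG $(\sections{SL^{\vee}},\lambda^{\transpose})$-module structure, in particular that the transposes of the maps in Corollary~\ref{cor:KerPhi} still assemble into a differential after the rearrangement that swaps $K^{i}$ and $(K^{b-i})^{\vee}$. This amounts to checking the compatibility by a direct (but unenlightening) computation, after which the argument closes exactly as in Lemma~\ref{lem:En}.
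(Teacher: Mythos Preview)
Your proposal is correct and matches the paper's approach: the $I$-map is handled via Corollary~\ref{cor:KeyLem2} (Proposition~\ref{prop:CotangentIMap}), and the $\Psi^{\star}$-map by dualising the entire argument for $\Phi_{\ast}$, with your $(\ker\Phi_{\ast})^{\vee}$ being exactly the paper's $\coker\Phi^{\star}$ and the local analysis at classical points carried out via the transposed maps $\delta_i^{\transpose}$, $\eta_i^{\transpose}$ (Lemmas~\ref{lem:CotangentEquiv}--\ref{lem:CotangentEn}). One minor correction: the dual complex places $(K^{i})^{\vee}$ in degree $-i$, so only the filtration direction reverses rather than $K^{i}$ being swapped with $(K^{b-i})^{\vee}$, and the paper disposes of your anticipated technical obstacle simply by invoking duality.
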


The proof of Theorem~\ref{thm:main2-1} is analogous to that of Theorem~\ref{thm:main1-1}. Below, we briefly outline the key steps, sketching each part of the proof since the arguments closely mirror those in the tangent bundle case.
\begin{proposition}\label{prop:CotangentIMap}
Under the hypothesis of Theorem~\ref{thm:main2-1}, the map $I$ in~\eqref{eq:CotangentI} is a quasi-isomorphism.
\end{proposition}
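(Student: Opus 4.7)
The plan is to mirror the proof of Proposition~\ref{prop:IMap} verbatim, substituting the cotangent bundle for the tangent bundle. The cotangent bundle $(\cotangent{\cN},\liederivative{R})$ is defined as the $\smooth{\cN}$-linear dual of the DG vector bundle $(\tangent{\cN},\liederivative{R})$, and therefore is itself a DG vector bundle over $(\cN,R)$; equivalently, $\Omega(\cN)=\sections{\cotangent{\cN}}$ carries a natural DG $(\smooth{\cN},R)$-module structure. Since $(\cN,R)$ is a DG manifold of positive amplitude, this brings us into the setting where Corollary~\ref{cor:KeyLem2} applies.

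The first step is to observe that, because $\Psi$ is a weak equivalence between DG manifolds of positive amplitude, the induced map $\Psi^{\ast}:(\smooth{\cN},R)\to(\smooth{\cM},Q)$ is a quasi-isomorphism of DG algebras, and in particular a quasi-isomorphism of DG $(\smooth{\cN},R)$-modules, with $\smooth{\cM}$ regarded as a $\smooth{\cN}$-module via $\Psi^{\ast}$.

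The second step is to apply Corollary~\ref{cor:KeyLem2} to the DG vector bundle $(\cotangent{\cN},\liederivative{R})$ over $(\cN,R)$ together with the quasi-isomorphism $\Psi^{\ast}$. This yields a quasi-isomorphism of DG $(\smooth{\cN},R)$-modules
\[
\Psi^{\ast}\tensor\id : (\smooth{\cN}\tensor_{\smooth{\cN}}\Omega(\cN),\, R\tensor\id+\id\tensor\liederivative{R}) \to (\smooth{\cM}\tensor_{\smooth{\cN}}\Omega(\cN),\, Q\tensor\id+\id\tensor\liederivative{R}).
\]
The source is canonically $(\Omega(\cN),\liederivative{R})$, and the target is identified with $(\sections{\Psi^{\ast}\cotangent{\cN}},\liederivative{Q,R})$ via the standard isomorphism $\sections{\Psi^{\ast}\cotangent{\cN}}\cong \smooth{\cM}\tensor_{\smooth{\cN}}\Omega(\cN)$ (the cotangent analogue of the identification~\eqref{eq:DerTen}). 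Under these identifications the map $\Psi^{\ast}\tensor\id$ sends $\omega\mapsto 1\tensor\omega$, so it coincides with $I$.

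There is no genuine obstacle: once the correct identification of the tensor product with $\sections{\Psi^{\ast}\cotangent{\cN}}$ is in place, the statement is a formal consequence of Corollary~\ref{cor:KeyLem2}. As in the analogous remark following Proposition~\ref{prop:IMap}, this argument only uses that $\Psi^{\ast}$ is a quasi-isomorphism of DG $(\smooth{\cN},R)$-modules, so the hypothesis that $\Psi$ be an acyclic linear fibration may be relaxed to $\Psi$ being merely a weak equivalence.
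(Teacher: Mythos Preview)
Your proof is correct and follows the same approach as the paper: apply Corollary~\ref{cor:KeyLem2} to the quasi-isomorphism $\Psi^{\ast}:(\smooth{\cN},R)\to(\smooth{\cM},Q)$ and the DG vector bundle $(\cotangent{\cN},\liederivative{R})$, then identify the resulting map with $I$. The paper's proof is a one-line reference to this corollary, and your version simply unpacks the identifications more explicitly.
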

\begin{proof}
Apply Corollary~\ref{cor:KeyLem2} to the quasi-isomorphism $\Psi^{\ast}:(\smooth{\cN},R)\to (\smooth{\cM},Q)$ and the cotangent bundle $(\cotangent{\cN},\cQ_{\cotangent{\cN}})$.
\end{proof}

Using Proposition~\ref{prop:Cotangent} instead of Proposition~\ref{prop:Tangent}, we have an analogue of Lemma~\ref{lem:Equiv}.
\begin{lemma}\label{lem:CotangentEquiv}
Under the hypothesis of Lemma~\ref{lem:Equiv}, the following are equivalent.
\begin{enumerate}
\item The map $\Psi^{\star}$ in~\eqref{eq:CotangentPsi} is a quasi-isomorphism.
\item The map
\[\Phi^{\star}:=
\begin{bmatrix}
f^{\star}  & 0 \\
0 &  \phi_{1}^{\transpose}
\end{bmatrix}
:  (\sections{SL^{\vee}\tensor (f^{\ast}\cotangent{N}\oplus f^{\ast}E^{\vee})}, D_{\mu}^{\transpose}) \to (\sections{SL^{\vee} \tensor (\cotangent{M}\oplus L^{\vee})}, D_{\lambda}^{\transpose}),
\]
is a quasi-isomorphism.
\item The DG module $\coker\Phi^{\star}$ is acyclic. 
\end{enumerate}
\end{lemma}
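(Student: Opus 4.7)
The plan is to mirror the proof of Lemma~\ref{lem:Equiv} by dualising throughout. I would first establish the equivalence (1) $\Leftrightarrow$ (2) via the cotangent analogue of Lemma~\ref{lem:PhiStar} combined with Corollary~\ref{cor:Ker1}, and then deduce (2) $\Leftrightarrow$ (3) from a short exact sequence of cochain complexes arising from the linear fibration hypothesis.

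For (1) $\Leftrightarrow$ (2), I would take the $\sections{SL^{\vee}}$-linear dual of the morphism $\Phi_{\ast}$ from Lemma~\ref{lem:PhiStar}. Since dualisation transposes matrices (and swaps the triangular orientation from lower-left to upper-right), this yields a morphism of DG $(\sections{SL^{\vee}},\lambda^{\transpose})$-modules
\[
\Phi^{\star} = \begin{bmatrix} f^{\star} & \gamma^{\transpose} \\ 0 & \phi_{1}^{\transpose} \end{bmatrix} : (\sections{SL^{\vee}\tensor (f^{\ast}\cotangent{N}\oplus f^{\ast}E^{\vee})}, D_{\mu}^{\transpose}) \to (\sections{SL^{\vee} \tensor (\cotangent{M}\oplus L^{\vee})}, D_{\lambda}^{\transpose})
\]
whose differentials are exactly the duals described in Proposition~\ref{prop:Cotangent}. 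Choosing the $\tangent{M}$-connection on $L$ as in Eq.~\eqref{eq:NablaL}, Corollary~\ref{cor:Ker1} gives $\gamma = 0$, hence $\gamma^{\transpose} = 0$, producing the diagonal form in statement (2). The maps $\Psi^{\star}$ and $\Phi^{\star}$ correspond under the equivalence of Proposition~\ref{prop:CatBundle}, so (1) $\Leftrightarrow$ (2) follows.

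For (2) $\Leftrightarrow$ (3), the linear fibration hypothesis ensures that $f_{\ast}:\tangent{M}\to f^{\ast}\tangent{N}$ and $\phi_{1}:L\to f^{\ast}E$ are degree-wise surjective bundle maps over $M$. In the smooth category these surjections admit splittings, so the dual maps $f^{\star}$ and $\phi_{1}^{\transpose}$ are injective bundle maps. Consequently $\Phi^{\star}$ is an injective morphism of cochain complexes, and $\coker \Phi^{\star}$ inherits a DG $(\sections{SL^{\vee}},\lambda^{\transpose})$-module structure, fitting into a short exact sequence
\[0 \to (\sections{SL^{\vee}\tensor(f^{\ast}\cotangent{N}\oplus f^{\ast}E^{\vee})}, D_{\mu}^{\transpose}) \xto{\Phi^{\star}} (\sections{SL^{\vee}\tensor(\cotangent{M}\oplus L^{\vee})}, D_{\lambda}^{\transpose}) \to \coker \Phi^{\star} \to 0\]
of cochain complexes. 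By the standard long-exact-sequence argument (cf.~\cite[Theorem~1.3.1]{MR1269324}), $\Phi^{\star}$ is a quasi-isomorphism if and only if $\coker \Phi^{\star}$ is acyclic.

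No step should present a genuine obstacle, as the argument is obtained by dualising the proof of Lemma~\ref{lem:Equiv} essentially verbatim. The only points requiring attention are bookkeeping ones: verifying that $\sections{SL^{\vee}}$-linear duality behaves correctly with the triangular block structure (so that the off-diagonal entry indeed ends up in the upper-right corner and is killed by the same choice of connections as in the tangent case), and confirming that the injectivity of $\Phi^{\star}$ follows from splitting of smooth vector bundle surjections. Both are routine.
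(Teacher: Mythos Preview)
Your proposal is correct and follows essentially the same approach as the paper: the equivalence (1)$\Leftrightarrow$(2) is obtained by taking the $\sections{SL^{\vee}}$-linear dual of Lemma~\ref{lem:Equiv} (you spell out the block-transpose and the vanishing of $\gamma^{\transpose}$ more explicitly than the paper, which just says ``immediate consequence''), and (2)$\Leftrightarrow$(3) is deduced from the injectivity of $\Phi^{\star}$ together with the long exact sequence in cohomology, exactly as in the paper.
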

\begin{proof}
The equivalence between~(1) and~(2) is an immediate consequence of Lemma~\ref{lem:Equiv}, by taking $\smooth{\cM}=\sections{SL^{\vee}}$-linear dual. 

For the equivalence between~(2) and~(3), observe that both maps $f^{\star}$ and $\phi_{1}^{\transpose}$ are injective. Thus, the induced map $\Phi^{\star}$ is injective, inducing an exact sequence of cochain complexes
\[ 0\to  (\sections{SL^{\vee}\tensor (f^{\ast}\cotangent{N}\oplus f^{\ast}E^{\vee})}, D_{\mu}^{\transpose})\xto{\Phi^{\star}} (\sections{SL^{\vee} \tensor (\cotangent{M}\oplus L^{\vee})}, D_{\lambda}^{\transpose}) \to \coker \Phi^{\star} \to 0.\]
By a standard argument in homological algebra (see for instance \cite[Theorem~1.3.1]{MR1269324}), the cochain complex $\coker \Phi^{\star}$ is acyclic if and only if $\Phi^{\star}$ is a quasi-isomorphism. This proves the equivalence between~(2) and~(3).
\end{proof}

Analogously to the case of $\ker \Phi_{\ast}$, the DG -module $\coker \Phi^{\star}$ is also equipped with a structure of a sheaf of DG $(\sections{SL^{\vee}},\lambda^{\transpose})$-modules. Indeed, $\coker \Phi^{\star}$ can be regarded as the DG module of sections of the dual DG vector bundle corresponding to $\ker \Psi_{\ast}$.
Using Lemma~\ref{lem:Locality}, we obtain an analogue of Corollary~\ref{cor:Locality}. The proof is identical to that of Corollary~\ref{cor:Locality}, with $\ker \Phi_{\ast}$ replaced by $\coker \Phi^{\star}$. This proves the following lemma.
\begin{lemma}
The DG $(\sections{SL^{\vee}},\lambda^{\transpose})$-module $\coker \Phi^{\star}$ is acyclic if, for every point $p\in M$, there exists an open neighbourhood $U$ of $p$ such that the restriction $\coker \Phi^{\star}|_{U}$ is acyclic.
\end{lemma}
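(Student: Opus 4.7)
The plan is to mirror the argument of Corollary~\ref{cor:Locality} verbatim, replacing $\ker \Phi_{\ast}$ by $\coker \Phi^{\star}$. The only things that need to be invoked are (i) that $\coker \Phi^{\star}$ carries a sheaf structure of DG $\cO_{M}$-modules on $M$, and (ii) Lemma~\ref{lem:Locality}, which upgrades the cohomology presheaf of any such sheaf to a genuine sheaf. The local-to-global character of acyclicity then follows formally from the locality axiom for sheaves.

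More concretely, I would first justify the sheaf structure. Since $\Phi$ is a linear fibration, the morphism $\Phi^{\star}$ is an injective morphism of DG $\cO_{M}$-modules with locally free cokernel; equivalently, in the geometric picture of Remark~\ref{rem:KerPsi}, $\coker \Phi^{\star}$ corresponds to the DG $(\smooth{\cM},Q)$-module of sections of the dual DG vector bundle $(\ker \Psi_{\ast})\dual$ over $(\cM,Q)$. Hence, for each open $U\subset M$, the restriction $\coker\Phi^{\star}|_{U}$ is a DG $(\sections{U;SL^{\vee}},\lambda^{\transpose}|_{U})$-module, and these assemble into a sheaf of DG $\cO_{M}$-modules on $M$.

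Next, I would apply Lemma~\ref{lem:Locality} to this sheaf. It yields that the cohomology presheaf
\[U \mapsto H^{\bullet}\bigl(\coker \Phi^{\star}(U)\bigr)\]
is in fact a sheaf of graded $\cO_{M}$-modules on $M$. A cochain complex is acyclic if and only if its cohomology vanishes, so the hypothesis supplies an open cover $\{U_{p}\}_{p\in M}$ on which the cohomology sheaf vanishes identically.

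Finally, I would invoke the locality axiom: given any global cohomology class $[\omega] \in H^{\bullet}(\coker \Phi^{\star}(M))$, its restrictions $[\omega]|_{U_{p}} \in H^{\bullet}(\coker \Phi^{\star}(U_{p})) = 0$ all vanish, so $[\omega] = 0$. Thus the global cohomology vanishes, and $\coker \Phi^{\star}$ is acyclic. There is no real obstacle here---this is purely a sheaf-theoretic bookkeeping argument---so the only care needed is to make the identification of $\coker \Phi^{\star}$ with sections of a DG vector bundle explicit enough for Lemma~\ref{lem:Locality} to apply cleanly.
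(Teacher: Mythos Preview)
Your proposal is correct and matches the paper's own proof essentially verbatim: the paper simply states that the argument is identical to that of Corollary~\ref{cor:Locality} with $\ker \Phi_{\ast}$ replaced by $\coker \Phi^{\star}$, after noting that $\coker \Phi^{\star}$ is the module of sections of the dual DG vector bundle of $\ker \Psi_{\ast}$. Your justification of the sheaf structure and the use of Lemma~\ref{lem:Locality} are exactly the ingredients the paper invokes.
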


For a non-classical point $p\in M$, the analogue of Lemma~\ref{lem:NonClassical} holds, with the same proof after replacing $\ker \Phi_{\ast}$ by $\coker \Phi^{\star}$:
\begin{lemma}
Let $\Phi:(M,L,\lambda)\to (N,E,\mu)$ be a linear fibration in $\cLinfCat$. Then, for $U=M-Z(\lambda_{0})$, the DG module $\coker \Phi^{\star}|_{U}$ is acyclic.
\end{lemma}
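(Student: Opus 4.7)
The plan is to mirror the proof of Lemma~\ref{lem:NonClassical} verbatim, with $\ker \Phi_{\ast}$ replaced by $\coker \Phi^{\star}$. The whole argument hinges on recognising $\coker \Phi^{\star}$ as the DG module of sections of a DG vector bundle over the restricted DG manifold $(\cM|_{U}, Q|_{U})$, and then invoking Corollary~\ref{cor:KeyLem3} together with the vanishing of $(\smooth{\cM|_{U}},Q|_{U})$'s cohomology.

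First, I would observe that since $\Psi$ is a linear fibration, the induced morphism $\Psi_{\ast}:\tangent{\cM}\to \Psi^{\ast}\tangent{\cN}$ is a surjective morphism of DG vector bundles over $(\cM,Q)$, so its kernel $\ker \Psi_{\ast}$ is itself a DG vector bundle (as already noted in Remark~\ref{rem:KerPsi}). Taking $\smooth{\cM}$-linear duals, one obtains an injective morphism $\Psi^{\star}:\Psi^{\ast}\cotangent{\cN}\to \cotangent{\cM}$ whose cokernel is the dual DG vector bundle of $\ker \Psi_{\ast}$; passing to sections and using the correspondence in Proposition~\ref{prop:CatBundle} identifies this module with $\coker \Phi^{\star}$. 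Restricting to $U$ gives a DG vector bundle over $(\cM|_{U},Q|_{U})$.

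Second, I would invoke the standard fact (cited in the proof of Lemma~\ref{lem:NonClassical}, e.g.\ \cite[Lemma~5.4]{arXiv:2303.11140}) that a DG manifold of positive amplitude whose classical locus is empty has acyclic algebra of functions. Since $U = M - Z(\lambda_{0})$ is precisely the locus of non-classical points, $(\smooth{\cM|_{U}}, Q|_{U})$ is acyclic. Then Corollary~\ref{cor:KeyLem3}, applied to the DG vector bundle identified above, yields acyclicity of $\coker \Phi^{\star}|_{U}$.

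I do not anticipate any genuine obstacle: the only point requiring a sentence of care is the identification of $\coker \Phi^{\star}$ with sections of the dual DG vector bundle of $\ker \Psi_{\ast}$, but this is essentially tautological since dualising and taking cokernels commute with the formation of the DG structure induced from $Q$.
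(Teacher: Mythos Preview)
Your proposal is correct and follows precisely the paper's own approach: the paper explicitly states that the proof is the same as that of Lemma~\ref{lem:NonClassical} with $\ker \Phi_{\ast}$ replaced by $\coker \Phi^{\star}$, and it has already recorded (just before the lemma) that $\coker \Phi^{\star}$ is the DG module of sections of the dual DG vector bundle of $\ker \Psi_{\ast}$, exactly as you argue.
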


Before we move on to the case of classical points, we fix notations: 
\[C^{0}:=\coker f^{\star}, \quad \coker \phi_{1}^{\transpose}=\bigoplus_{i=1}^{b}C^{-i},\quad 
\coker \Phi^{\star}=(\sections{SL^{\vee}\tensor (\bigoplus_{i=0}^{b} C^{-i})}, \overline{D}^{\transpose}),\]
where $(i,j)$-component of $\overline{D}^{\transpose}=(\overline{d}^{\transpose}_{ij})_{0\leq i,j \leq b}$ is 
\[\overline{d}^{\transpose}_{ij}: \sections{SL^{\vee}\tensor C^{-i}}\to \sections{SL^{\vee}\tensor C^{-j}}.\]

Now, let $p\in M$ be a classical point. By Lemma~\ref{lem:U-Exact}, we obtain the following lemma.
\begin{lemma}
Assume the setting of Lemma~\ref{lem:U-Exact}. For each classical point $p\in M$, there exists an open neighbourhood $U$ of $p$ such that
\[
\begin{tikzcd}
 0 \arrow{r}& C^{-b}|_{U} \arrow{r}{\delta_{-b+1}^{\transpose}} & C^{-b+1}|_{U} \arrow{r}& \cdots \arrow{r}& C^{-2}|_{U}\arrow{r}{\delta_{-1}^{\transpose}} & C^{-1}|_{U} \arrow{r}{\delta_{0}^{\transpose}} & C^{0}|_{U} \arrow{r}& 0
 \end{tikzcd}\]
is an exact sequence of vector bundles over $U$, dual to~\eqref{eq:DeltaSES}. In fact, the maps $\delta_{-i}^{\transpose}$ 
 are the dual of $\delta_{i}$ (and similarly, $\eta_{-i}^{\transpose}$ are the dual of $\eta_{i}$) in Lemma~\ref{lem:U-Exact}.
\end{lemma}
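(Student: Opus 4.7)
The plan is to deduce this statement directly from Lemma~\ref{lem:U-Exact} by dualising the exact sequence~\eqref{eq:DeltaSES}. This reduces the task to three checks: that the dual bundles match $C^{-i}|_U$, that the transposed maps match the claimed $\delta_{-i}^{\transpose}$, and that exactness is preserved. I expect no conceptual difficulty beyond carefully tracking the identifications; the genuine analytic content already lives in Lemma~\ref{lem:U-Exact}.

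First, I will identify the cokernels with duals of kernels. Because $\Phi$ is a linear fibration, the bundle maps $f_{\ast}:\tangent{M}\to f^{\ast}\tangent{N}$ and $\phi_{1}:L\to f^{\ast}E$ are surjective, so the short exact sequences
\[
0\to K^{0}\to \tangent{M}\xto{f_{\ast}} f^{\ast}\tangent{N}\to 0, \qquad 0\to K^{i}\to L^{i}\xto{\phi_{1}} (f^{\ast}E)^{i}\to 0 \quad (i\geq 1)
\]
are sequences of finite-rank smooth vector bundles and are therefore split. Dualising yields canonical isomorphisms $(K^{0})^{\vee}\cong \coker f^{\star}=C^{0}$ and $(K^{i})^{\vee}\cong \coker\phi_{1}^{\transpose}|_{\text{degree } -i}=C^{-i}$ for $1\leq i\leq b$.

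Next, I will dualise the exact sequence of Lemma~\ref{lem:U-Exact}. That sequence is \emph{split} exact on $U$: the constructed bundle maps $\eta_{i}:K^{i+1}|_{U}\to K^{i}|_{U}$ together with relations~\eqref{eq:Etas} express it as a direct sum of short split exact sequences. Taking bundle-linear duals preserves split exactness, so the dual
\[
0\to (K^{b}|_{U})^{\vee}\xto{\delta_{b-1}^{\transpose}}(K^{b-1}|_{U})^{\vee}\to \cdots \to (K^{1}|_{U})^{\vee}\xto{\delta_{0}^{\transpose}}(K^{0}|_{U})^{\vee}\to 0
\]
is again exact. Substituting the identifications from the previous step, this becomes the sequence in the statement, with the transposed maps denoted $\delta_{-i}^{\transpose}$ by the degree-shift convention adopted in the statement.

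The only step that warrants extra care---and which I view as the sole obstacle---is verifying the compatibility of the canonical identifications $C^{-i}|_{U}\cong (K^{i}|_{U})^{\vee}$ with the local splittings used to define $\delta_{i}$, so that $\delta_{-i}^{\transpose}$ in the statement really agrees with the dual of $\delta_{i}$ from Lemma~\ref{lem:U-Exact}. This is routine: both sides are constructed from the same choice of splitting $L\cong K\oplus f^{\ast}E$ and the connection $\nabla^{L}=\nabla^{K}+\nabla^{f^{\ast}E}$ defined in~\eqref{eq:NablaL}, and the formulas~\eqref{eq:Deltas} transpose coefficient-by-coefficient. Once this is noted, the sequence is exact on $U$ and extends the fibrewise-dual of~\eqref{eq:SESp} at $p$, completing the proof.
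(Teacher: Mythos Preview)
Your proposal is correct and follows exactly the paper's approach: the paper's proof is the single sentence ``Take the dual of Eq.~\eqref{eq:DeltaSES},'' and you have simply unpacked what that entails (identifying $C^{-i}\cong (K^{i})^{\vee}$ via the split surjections, observing that~\eqref{eq:DeltaSES} is split exact thanks to the $\eta_i$, and hence dualises to an exact sequence). Your added detail on compatibility of identifications is sound and more thorough than the paper, but not a different method.
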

\begin{proof}
Take the dual of Eq.~\eqref{eq:DeltaSES}.
\end{proof}

We use $\overline{d}_{ij}^{\transpose}$, $\eta_{i}^{\transpose}$, and $\delta_{i}^{\transpose}$, instead of $\overline{d}_{ij}$, $\eta_{i}$, and $\delta_{i}$, to construct the dual version of $(\cE(n),\overline{D(n)})$, as in Lemma~\ref{lem:En}. 
More preciesely, by denoting $\chi^{-b}=C^{-b}$, and $\coker \delta_{i}^{\transpose}:=\chi^{-i}$ for $i<b$,  we have
\[\cE(-n)^{\transpose}= (\sections{SL^{\vee}\tensor (\bigoplus_{i=-n+1}^{0}C^{i}\oplus \chi^{-n})} ), \quad \overline{D(-n)}^{\transpose}=(\overline{d(-n)}^{\transpose}_{ij})_{0\leq i,j,n}\]
where each $\overline{d(-n)}^{\transpose}_{ij}$ is the dual of $\overline{d(n)}_{ij}$ in Eq.~\eqref{eq:dnij}. In this setting, we have an analogue of Lemma~\ref{lem:En}.
\begin{lemma}\label{lem:CotangentEn}
Under the setting of Lemma~\ref{lem:En}, we have
\begin{enumerate}
\item The pair $(\cE(-n)^{\transpose}, \overline{D(-n)}^{\transpose})$ is a DG $(\sections{SL^{\vee}},\lambda^{\transpose})$-module.
\item For $n=b$, we have $(\cE(-b)^{\transpose}, \overline{D(-b)}^{\transpose})=\coker \Phi^{\star}|_{U}$.
\item For each $n=2,\ldots,b$, there exists an isomorphism of DG $(\sections{SL^{\vee}},\lambda^{\transpose})$-modules:
\[ (\cE(-n)^{\transpose},\overline{D(-n)}^{\transpose}) \cong ((\cE(-n+1)^{\transpose},\overline{D(-n+1)}^{\transpose}) \oplus \mathcal{F}(-n)^{\transpose},\]
where $\mathcal{F}(-n)^{\transpose}$ is acyclic.
\item For $n=1$, the DG module $(\cE(-1)^{\transpose}, \overline{D(-1)}^{\transpose})$ is acyclic.
\end{enumerate}
\end{lemma}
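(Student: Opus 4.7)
The proof is the dualisation of Lemma~\ref{lem:En}. Since $\coker \Phi^{\star}$ corresponds, via Remark~\ref{rem:KerPsi}, to the DG module of sections of the dual DG vector bundle of $\ker \Psi_{\ast}$, the entire inductive scheme of Lemma~\ref{lem:En} carries over after taking $\sections{SL^{\vee}}$-linear duals. I would follow the same downward induction on $n$, starting from the base case $n=b$.

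Assertion~(2) is a direct computation: by Proposition~\ref{prop:Cotangent}, the components of $\overline{D}^{\transpose}$ on $\bigoplus_{i=0}^{b} C^{-i}$ are precisely the transposes of the entries computed in Corollary~\ref{cor:KerPhi}, and a short calculation using the identities~\eqref{eq:Etas} shows that $\overline{d(-b)}^{\transpose}_{ij} = \overline{d}^{\transpose}_{ij}$, which matches the DG module structure of $\coker\Phi^{\star}|_{U}$. Once~(2) is established, assertion~(1) will follow by downward induction from the direct-sum decomposition asserted in~(3), together with the DG module structure of $(\cE(-n+1)^{\transpose},\overline{D(-n+1)}^{\transpose})$ and the acyclicity of $\mathcal{F}(-n)^{\transpose}$.

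For assertion~(3), I would transpose the construction from the proof of Claim~\ref{claim:1}: set
\[\alpha^{\transpose} := \id - \sum_{j=0}^{n-2} \alpha^{\transpose}_{n-1,j}\]
as an automorphism of the graded $\sections{SL^{\vee}}$-module $\cE(-n)^{\transpose}$, where $\alpha^{\transpose}_{n-1,j}$ denotes the $\sections{SL^{\vee}}$-linear dual of the map $\alpha_{n-1,j}$ appearing in the proof of Lemma~\ref{lem:En}. Conjugating the differential $\overline{D(-n)}^{\transpose}$ by $\alpha^{\transpose}$ will stabilise the natural embedding $\cE(-n+1)^{\transpose} \subset \cE(-n)^{\transpose}$ and restrict to $\overline{D(-n+1)}^{\transpose}$ on that submodule---this is the dual of Claim~\ref{claim:1}. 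Under the dual of the identification~\eqref{eq:IdentifycEn}, namely the splitting $\sections{SL^{\vee}\tensor C^{-n+1}} \cong \sections{SL^{\vee}\tensor (\chi^{-n+1}\oplus \chi^{-n}[-1])}$ induced by $\eta^{\transpose}_{-n+1}$, the complement is identified with $\mathcal{F}(-n)^{\transpose}$. The same mapping-cone argument as in Claim~\ref{claim:2} then exhibits $\mathcal{F}(-n)^{\transpose}$ as the mapping cone of an isomorphism of cochain complexes (the dual of $\widetilde{\id}$), hence acyclic by~\cite[Corollary~1.5.4]{MR1269324}.

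Assertion~(4) is then immediate: since $\cE(0)^{\transpose}=0$ by convention, applying the identification from~(3) at $n=1$ gives $\cE(-1)^{\transpose} \cong \mathcal{F}(-1)^{\transpose}$, which is acyclic by the argument above. The main delicacy in carrying out this plan will be the bookkeeping of Koszul signs when dualising---the degree shift $[1]$ in~\eqref{eq:IdentifycEn} becomes $[-1]$ after dualisation, and the sign $(-1)^{\degree{\xi}+1}$ appearing in the map $\widetilde{\id}$ in the proof of Claim~\ref{claim:2} must be tracked consistently so that the transposed differential is still presented as a mapping cone of an isomorphism. Once these signs are handled correctly, every step of Lemma~\ref{lem:En} dualises cleanly to yield the required DG module structure and acyclicity assertions.
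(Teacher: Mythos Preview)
Your approach is correct and matches the paper's own proof, which simply states that everything proceeds by dualising Lemma~\ref{lem:En} (replacing $\overline{d}_{ij}$, $\eta_i$, $\delta_i$ by their transposes) and that assertion~(4) follows as in Corollary~\ref{cor:Classical}. One small caution on terminology: in the dual picture the \emph{canonical} map between $\cE(-n)^{\transpose}$ and $\cE(-n+1)^{\transpose}$ is a projection (dual to the inclusion $\kappa^{n-1}\subset K^{n-1}$), not an embedding---your splitting via $\eta^{\transpose}_{-n+1}$ provides a section of this projection, so the direct-sum decomposition in~(3) still goes through exactly as you describe, but the phrase ``natural embedding'' should be replaced accordingly when you write out the details.
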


\begin{proof}
The proof is essentially identical to that of Lemma~\ref{lem:En}, by duality. For instance, the maps $\overline{d}_{ij}$, $\eta_{i}$, and $\delta_{i}$ are replaced by their duals $\overline{d}_{ij}^{\transpose}$, $\eta_{i}^{\transpose}$, and $\delta_{i}^{\transpose}$, respectively. For the last assertion, we follow the proof of Corollary~\ref{cor:Classical}.
\end{proof}

Combining Lemma~\ref{lem:CotangentEquiv}-\ref{lem:CotangentEn}, we have proved following:
\begin{proposition}\label{prop:CotangentPsiMap}
Under the hypothesis of Theorem~\ref{thm:main2-1}, the map $\Psi^{\star}$ in~\eqref{eq:CotangentPsi} is a quasi-isomorphism.
\end{proposition}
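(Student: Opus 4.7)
The plan is to mirror the proof of Proposition~\ref{prop:PsiMap} almost verbatim, using the dual machinery that has already been assembled in Lemma~\ref{lem:CotangentEquiv}, the locality lemma for $\coker \Phi^{\star}$, and Lemma~\ref{lem:CotangentEn}. By Lemma~\ref{lem:CotangentEquiv}, the claim that $\Psi^{\star}$ is a quasi-isomorphism is equivalent to the acyclicity of the DG module $\coker \Phi^{\star}$, where $\Phi=(f,\phi_{1})$ is the acyclic linear fibration in $\cLinfCat$ corresponding to $\Psi$. So the first step of the proof is to reduce to showing $\coker \Phi^{\star}$ is acyclic.

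Next, I would invoke the sheaf-theoretic locality result: since $\coker \Phi^{\star}$ carries the structure of a sheaf of DG $\cO_{M}$-modules (as the DG module of sections of the dual of the DG vector bundle $\ker \Psi_{\ast}$), Lemma~\ref{lem:Locality} reduces the global acyclicity statement to a local one. Explicitly, it suffices to prove that for every point $p\in M$ there exists an open neighbourhood $U$ of $p$ such that $\coker \Phi^{\star}|_{U}$ is acyclic.

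At this stage I would split into two cases according to whether $p$ is classical or not. If $p$ is non-classical, then $p$ lies in the open set $M-Z(\lambda_{0})$, on which the DG algebra of functions of the corresponding DG manifold of positive amplitude is acyclic; applying Corollary~\ref{cor:KeyLem3} to the DG vector bundle dual to $\ker \Psi_{\ast}$ yields acyclicity of $\coker \Phi^{\star}$ restricted to this neighbourhood (this is the dual analogue of Lemma~\ref{lem:NonClassical} which the excerpt explicitly states). If $p$ is classical, we take the open neighbourhood $U$ furnished by the dual extension of the tangent-complex exact sequence, and by Lemma~\ref{lem:CotangentEn} the DG module $\coker \Phi^{\star}|_{U}$ is identified with $(\cE(-b)^{\transpose},\overline{D(-b)}^{\transpose})$, which, by iterating assertion~(3) of that lemma down to $n=1$, is quasi-isomorphic to $(\cE(-1)^{\transpose}, \overline{D(-1)}^{\transpose})$, and this is acyclic by assertion~(4).

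Combining the two cases, $\coker \Phi^{\star}$ is locally acyclic, hence globally acyclic by the locality step, so by Lemma~\ref{lem:CotangentEquiv} the map $\Psi^{\star}$ is a quasi-isomorphism. I do not anticipate any genuine obstacle here, since all the hard work --- the lower semi-continuity construction producing $U$, the inductive decomposition of the dual complex into acyclic pieces, and the passage from sheaf-level to global acyclicity --- has already been encoded in Lemma~\ref{lem:CotangentEn} and the locality statement. The only nuance worth flagging in the write-up is that the direction of the exact sequence is reversed (an injection with cokernel, rather than a surjection with kernel), so one must apply the standard result that a morphism of cochain complexes is a quasi-isomorphism if and only if its cokernel (in the injective case) is acyclic; this is the content of Lemma~\ref{lem:CotangentEquiv} and justifies why acyclicity of $\coker \Phi^{\star}$ is the correct target.
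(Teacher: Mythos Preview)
Your proposal is correct and follows essentially the same approach as the paper: the paper's proof of this proposition is a single sentence combining the range of dual lemmas from Lemma~\ref{lem:CotangentEquiv} through Lemma~\ref{lem:CotangentEn}, which is exactly the reduction to acyclicity of $\coker \Phi^{\star}$, the sheaf-theoretic localisation, and the case split into non-classical and classical points that you spell out.
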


Finally, we prove Theorem~\ref{thm:main2-1}.
\begin{proof}[Proof of Theorem~\ref{thm:main2-1}]
Theorem~\ref{thm:main2-1} holds by Proposition~\ref{prop:CotangentIMap} and Proposition~\ref{prop:CotangentPsiMap}.
\end{proof}

\subsubsection{$(p,q)$-tensor bundles}
Now, we consider more general case: $(p,q)$-tensor bundles. 
Combining the tangent bundle and cotangent bundle cases with Corollary~\ref{cor:KeyLem2}, we obtain a similar result for the general case.

\begin{theorem}\label{thm:main3}
Let $\Psi:(\cM,Q)\to (\cN,R)$ be a weak equivalence between DG manifolds of positive amplitude. Then the cochain complexes of $(p,q)$-tensors $(\sections{\cotangent{\cM}^{\tensor q}\tensor \tangent{\cM}^{\tensor p}},\liederivative{Q})$ and $(\sections{\cotangent{\cN}^{\tensor q}\tensor \tangent{\cN}^{\tensor p}},\liederivative{R})$ are quasi-isomorphic.
\end{theorem}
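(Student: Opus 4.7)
The plan is to derive the $(p,q)$-tensor bundle case from the tangent bundle and cotangent bundle cases by iterated application of Corollary~\ref{cor:KeyLem2}. Following Lemma~\ref{lem:ALF} and Remark~\ref{rem:ALF}, the abstract theory of categories of fibrant objects reduces the statement for arbitrary weak equivalences to the case of acyclic linear fibrations, so I may assume $\Psi:(\cM,Q)\to(\cN,R)$ is an acyclic linear fibration.

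Abbreviate $T^{p,q}(\cM):=\cotangent{\cM}^{\tensor q}\tensor\tangent{\cM}^{\tensor p}$, $T^{p,q}(\cN):=\cotangent{\cN}^{\tensor q}\tensor\tangent{\cN}^{\tensor p}$, and set
\[\Psi^{\ast}T^{p,q}(\cN):=(\Psi^{\ast}\cotangent{\cN})^{\tensor q}\tensor(\Psi^{\ast}\tangent{\cN})^{\tensor p},\]
viewed as a DG vector bundle over $(\cM,Q)$ with differential induced by $\liederivative{Q,R}$. I will construct a zig-zag of quasi-isomorphisms of DG $(\smooth{\cM},Q)$-modules
\[
\sections{T^{p,q}(\cM)}\xleftarrow{\simeq} W \xrightarrow{\simeq} \sections{\Psi^{\ast}T^{p,q}(\cN)} \xleftarrow{\simeq} \sections{T^{p,q}(\cN)},
\]
where $W:=(\sections{\Psi^{\ast}\cotangent{\cN}})^{\tensor_{\smooth{\cM}} q}\tensor_{\smooth{\cM}}(\sections{\tangent{\cM}})^{\tensor_{\smooth{\cM}} p}$ carries the natural differential obtained from $\liederivative{Q,R}$ and $\liederivative{Q}$.

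The rightmost arrow is the map $I^{(p,q)}:\omega\mapsto 1\tensor\omega$, which is a quasi-isomorphism by a direct application of Corollary~\ref{cor:KeyLem2} to the quasi-isomorphism $\Psi^{\ast}:(\smooth{\cN},R)\to(\smooth{\cM},Q)$ and the DG vector bundle $T^{p,q}(\cN)$ over $(\cN,R)$, exactly as in Propositions~\ref{prop:IMap} and~\ref{prop:CotangentIMap}. The remaining two arrows are built by peeling off one tensor factor at a time. For the leftmost arrow I begin with the quasi-isomorphism $\Psi^{\star}:\sections{\Psi^{\ast}\cotangent{\cN}}\to\sections{\cotangent{\cM}}$ provided by Proposition~\ref{prop:CotangentPsiMap}; tensoring over $(\smooth{\cM},Q)$ with the DG vector bundle $(\Psi^{\ast}\cotangent{\cN})^{\tensor(q-1)}\tensor\tangent{\cM}^{\tensor p}$ and applying Corollary~\ref{cor:KeyLem2} replaces one copy of $\sections{\Psi^{\ast}\cotangent{\cN}}$ by $\sections{\cotangent{\cM}}$; iterating this procedure $q$ times yields the desired quasi-isomorphism $W\to\sections{T^{p,q}(\cM)}$. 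The middle arrow is produced analogously, starting from $\Psi_{\ast}:\sections{\tangent{\cM}}\to\sections{\Psi^{\ast}\tangent{\cN}}$ (Proposition~\ref{prop:PsiMap}) and iterating $p$ times on the tangent factors.

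The only bookkeeping required is to check that at each stage the factor one tensors with is genuinely the space of sections of a DG vector bundle over $(\cM,Q)$, so that Corollary~\ref{cor:KeyLem2} applies. This is immediate, since tensor products and pullbacks of DG vector bundles are DG vector bundles, and the natural differentials agree with those coming from the DG vector bundle structure. Composing the three quasi-isomorphisms above then links $\sections{T^{p,q}(\cM)}$ and $\sections{T^{p,q}(\cN)}$ by a chain of quasi-isomorphisms, proving the theorem.
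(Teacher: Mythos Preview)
Your proof is correct and follows essentially the same approach as the paper: reduce to acyclic linear fibrations via Remark~\ref{rem:ALF}, handle the map $I$ by a single application of Corollary~\ref{cor:KeyLem2}, and then iterate Corollary~\ref{cor:KeyLem2} on $\Psi_{\ast}$ and $\Psi^{\star}$ to pass one tensor factor at a time. The only cosmetic difference is the choice of intermediate object in the zig-zag: you use $W=\sections{\Psi^{\ast}\cotangent{\cN}}^{\tensor q}\tensor_{\smooth{\cM}}\sections{\tangent{\cM}}^{\tensor p}$ with arrows pointing out of it, whereas the paper uses $\sections{\cotangent{\cM}^{\tensor q}}\tensor_{\smooth{\cM}}\sections{\Psi^{\ast}\tangent{\cN}^{\tensor p}}$ with arrows pointing into it; both choices work for the same reason.
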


\begin{proof}
Again, as mentioned in Remark~\ref{rem:ALF}, we may assume that $\Psi$ is an acyclic linear fibration.

Following the proof of Proposition~\ref{prop:IMap} and Proposition~\ref{prop:CotangentIMap}, the map
\[ I : (\sections{\cotangent{\cN}^{\tensor q}\tensor \tangent{\cN}^{\tensor p}},\liederivative{R}) \to (\sections{\Psi^{\ast}(\cotangent{\cN}^{\tensor q}\tensor\tangent{\cN}^{\tensor p})},\liederivative{Q,R})\]
is a quasi-isomorphism. Thus, it remains to show that the cochain complexes $(\sections{\cotangent{\cM}^{\tensor q}\tensor \tangent{\cM}^{\tensor p}},\liederivative{Q})$ and $\sections{\Psi^{\ast}(\cotangent{\cN}^{\tensor q}\tensor \tangent{\cN}^{\tensor p})},\liederivative{Q,R})$ are quasi-isomorphic. 

By Proposition~\ref{prop:PsiMap}, we have a quasi-isomorphism of DG $(\smooth{\cM},Q)$-modules
\[\Psi_{\ast}:(\XX(\cM),\liederivative{Q})\to (\sections{\Psi^{\ast}\tangent{N}},\liederivative{Q,R}).\]
Applying Corollary~\ref{cor:KeyLem2} to this map and to the DG module $(\sections{\cotangent{\cM}^{\tensor q}\tensor \tangent{\cM}^{\tensor p-1}},\liederivative{Q})$, we obtain a quasi-isomorphism of DG $(\smooth{\cM},Q)$-modules
\[ \id \tensor \Psi_{\ast} : (\sections{\cotangent{\cM}^{\tensor q} \tensor \tangent{\cM}^{\tensor p}},\liederivative{Q})\to (\sections{\cotangent{\cM}^{\tensor q}\tensor \tangent{\cM}^{\tensor p-1}}\tensor_{\cR}\sections{\Psi^{\ast}\tangent{N}},\liederivative{Q}\tensor \id + \id \tensor \liederivative{Q,R}),\]
where $\cR=\smooth{\cM}$. 
Applying Corollary~\ref{cor:KeyLem2} iteratively in this manner---$p$-times---we obtain a quasi-isomorphism of DG $(\smooth{\cM},Q)$-modules:
\[ \id \tensor \Psi_{\ast}: (\sections{\cotangent{\cM}^{\tensor q}\tensor \tangent{\cM}^{\tensor p}},\liederivative{Q}) \to  (\sections{\cotangent{\cM}^{\tensor q}}\tensor_{\cR}\sections{ \Psi^{\ast}\tangent{\cN}^{\tensor p}}, \liederivative{Q}\tensor \id + \id \tensor \liederivative{Q,R}).\]
Similarly, we can also apply Corollary~\ref{cor:KeyLem2} to $\Psi^{\star}$ in place of $\Psi_{\ast}$ to obtain a quasi-isomorphism of DG $(\smooth{\cM},Q)$-modules (but in the opposite direction). 
Combining these, for any $(p,q)$, we obtain a pair of quasi-isomorphisms 
\begin{equation}\label{eq:pqDiagram1}
\begin{tikzcd}[column sep=-5em]
(\sections{\cotangent{\cM}^{\tensor q}\tensor \tangent{\cM}^{\tensor p}},\liederivative{Q}) \arrow{rd} && \arrow{ld}(\sections{\Psi^{\ast}(\cotangent{\cN}^{\tensor q}\tensor \tangent{\cN}^{\tensor p})},\liederivative{Q,R}) \\
& (\sections{\cotangent{\cM}^{\tensor q}}\tensor_{\cR}\sections{ \Psi^{\ast}\tangent{\cN}^{\tensor p}}, \liederivative{Q}\tensor \id + \id \tensor \liederivative{Q,R})&
 \end{tikzcd}
 \end{equation}
hence concluding the proof of Theorem~\ref{thm:main3}.
\end{proof}

By the proof of Theorem~\ref{thm:main3}, the quasi-isomorphisms in Theorem~\ref{thm:main3} are explicitly written as follows. As noted in Remark~\ref{rem:PsiW}, we may maintain the condition for $\Psi$ to be weak equivalences.
\begin{corollary}\label{cor:main3}
Under the assumption of Theorem~\ref{thm:main3}, we have a pair of quasi-isomorphisms
\[
\begin{tikzcd}[column sep=-1em]
(\sections{\cotangent{\cM}^{\tensor q} \tensor \tangent{\cM}^{\tensor p}}, \liederivative{Q}) \arrow[swap]{rd}{\alpha}& &(\sections{\cotangent{\cN}^{\tensor q} \tensor \tangent{\cN}^{\tensor p}}, \liederivative{R}) \arrow{ld}{\beta}
\\
&(\sections{\cotangent{\cM}^{\tensor q} \tensor \Psi^{\ast}\tangent{\cN}^{\tensor p}}, \liederivative{Q,R}) & 
\end{tikzcd}
\]
where, under the identifications of the form $\sections{\cotangent{\cM}^{\tensor q} \tensor \tangent{\cM}^{\tensor p}}\cong \sections{\Hom(\tangent{\cM}^{\tensor q}, \tangent{\cM}^{\tensor p})}$, the map $\alpha$ is defined by
\[\alpha(F)(X_{1},\ldots, X_{q})=\Psi_{\ast}\circ F(X_{1},\ldots,X_{q})\]
for $F\in \sections{\Hom(\tangent{\cM}^{\tensor q},\tangent{\cM}^{\tensor p})}$ and $X_{1},\ldots,X_{q}\in \XX(\cM)$, and the map $\beta$ is the composition of quasi-isomorphisms
\[ \beta: \sections{\cN; \cotangent{\cN}^{\tensor q} \tensor \tangent{\cN}^{\tensor p}} \to \sections{\cM; \Psi^{\ast}(\cotangent{\cN}^{\tensor q} \tensor \tangent{\cN}^{\tensor p})} \to \sections{\cM; \cotangent{\cM}^{\tensor q} \tensor \Psi^{\ast}\tangent{\cN}^{\tensor p}}\]
defined by
\[\beta(G)(Y_{1},\ldots,Y_{q})=\widetilde{G}(\Psi_{\ast}(Y_{1}),\ldots, \Psi_{\ast}(Y_{q}))\]
for $G\in \sections{\Hom(\tangent{\cN}^{\tensor q},\tangent{\cN}^{\tensor p})}$ and $Y_{1},\ldots,Y_{q}\in \XX(\cN)$, where $\widetilde{G}$ denotes the $\smooth{\cM}$-linear extension of $G$.
\end{corollary}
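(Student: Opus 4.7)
The plan is to unpack the pair of quasi-isomorphisms appearing in the proof of Theorem~\ref{thm:main3} and verify that they coincide with the explicit formulas given for $\alpha$ and $\beta$. Throughout, we may assume $\Psi$ is an acyclic linear fibration by Lemma~\ref{lem:ALF} (and extend to general weak equivalences via Remark~\ref{rem:PsiW}).

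First I would describe $\alpha$. Iterating Corollary~\ref{cor:KeyLem2} with the quasi-isomorphism $\Psi_{\ast}:(\XX(\cM),\liederivative{Q})\to (\sections{\Psi^{\ast}\tangent{\cN}},\liederivative{Q,R})$ from Proposition~\ref{prop:PsiMap}, applied $p$ times over the DG algebra $(\smooth{\cM},Q)$ against the DG vector bundle $\cotangent{\cM}^{\tensor q}\tensor \tangent{\cM}^{\tensor p-k}$ (for $k=1,\ldots,p$), yields a quasi-isomorphism
\[\id^{\tensor q}\tensor \Psi_{\ast}^{\tensor p} : (\sections{\cotangent{\cM}^{\tensor q}\tensor \tangent{\cM}^{\tensor p}},\liederivative{Q}) \to (\sections{\cotangent{\cM}^{\tensor q}\tensor \Psi^{\ast}\tangent{\cN}^{\tensor p}},\liederivative{Q,R}).\]
Unwinding the identification $\sections{\cotangent{\cM}^{\tensor q}\tensor \tangent{\cM}^{\tensor p}}\cong \sections{\Hom(\tangent{\cM}^{\tensor q},\tangent{\cM}^{\tensor p})}$, an element $F$ corresponds under this iterated map to the assignment $(X_{1},\ldots,X_{q})\mapsto \Psi_{\ast}\circ F(X_{1},\ldots, X_{q})$, which is precisely the formula for $\alpha$.

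For $\beta$, I would factor it as the composition of two quasi-isomorphisms. The first step is the map
\[I:(\sections{\cotangent{\cN}^{\tensor q}\tensor \tangent{\cN}^{\tensor p}},\liederivative{R}) \to (\sections{\Psi^{\ast}(\cotangent{\cN}^{\tensor q}\tensor \tangent{\cN}^{\tensor p})},\liederivative{Q,R}),\]
which is a quasi-isomorphism by applying Corollary~\ref{cor:KeyLem2} to $\Psi^{\ast}:(\smooth{\cN},R)\to (\smooth{\cM},Q)$ with coefficients in the DG vector bundle $\cotangent{\cN}^{\tensor q}\tensor \tangent{\cN}^{\tensor p}$; explicitly $I(G)=1\tensor G$, which corresponds to the $\smooth{\cM}$-linear extension $\widetilde{G}$. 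The second step iterates Corollary~\ref{cor:KeyLem2} using $\Psi^{\star}:(\sections{\Psi^{\ast}\cotangent{\cN}},\liederivative{Q,R})\to (\Omega(\cM),\liederivative{Q})$ from Proposition~\ref{prop:CotangentPsiMap}, applied $q$ times, to turn pullback cotangent factors into cotangent factors on $\cM$. Combining these, $\beta(G)$ evaluated on $(Y_{1},\ldots,Y_{q})\in \XX(\cM)^{\tensor q}$ becomes $\widetilde{G}(\Psi_{\ast}Y_{1},\ldots,\Psi_{\ast}Y_{q})$, matching the stated formula.

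Finally I would verify that the resulting diagram is the one from~\eqref{eq:pqDiagram1}, and invoke Theorem~\ref{thm:main3} to conclude that the target DG module is quasi-isomorphic to both sources. The only mild obstacle is bookkeeping: one must be careful about Koszul signs and the order in which the iterated applications of Corollary~\ref{cor:KeyLem2} are performed, and one must track how the abstract tensor-module descriptions translate back into the $\Hom$ picture used in the explicit formulas. Once these identifications are made unambiguous, the statement is a direct corollary of Theorem~\ref{thm:main3} together with Propositions~\ref{prop:IMap}, \ref{prop:PsiMap}, \ref{prop:CotangentIMap}, and~\ref{prop:CotangentPsiMap}.
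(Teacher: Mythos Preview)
Your proposal is correct and follows essentially the same approach as the paper: the corollary is stated without a separate proof, being presented as an immediate consequence of the proof of Theorem~\ref{thm:main3}, and your unpacking of the iterated applications of Corollary~\ref{cor:KeyLem2} (via $\Psi_{\ast}$ for $\alpha$, and via $I$ followed by $\Psi^{\star}$ for $\beta$) is exactly how those quasi-isomorphisms arise from diagram~\eqref{eq:pqDiagram1}. One minor point: in the statement the inputs $Y_{1},\ldots,Y_{q}$ to $\beta(G)$ should be elements of $\XX(\cM)$ rather than $\XX(\cN)$, as you implicitly noted when writing $(Y_{1},\ldots,Y_{q})\in \XX(\cM)^{\tensor q}$.
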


\begin{remark}\label{rem:Sym}
In Theorem~\ref{thm:main3}, the tensor products can be replaced by the symmetric tensor products and anti-symmetric tensor products. 
For instance, the cochain complexes $(\sections{S^{q}(\cotangent{\cM})\tensor \Lambda^{p}\tangent{\cM}},\liederivative{Q})$ and $(\sections{S^{q}(\cotangent{\cN})\tensor (\Lambda^{p}\tangent{\cN})},\liederivative{R})$ are quasi-isomorphic, under the assumption of Theorem~\ref{thm:main3}. 
Indeed, it follows from the fact that both $\alpha$ and $\beta$ in Corollary~\ref{cor:main3} preserve symmetric elements and anti-symmetric elements. 
\end{remark}

\section{Atiyah class, Todd class, and Hochschild cohomology}

\subsection{Atiyah class and Todd class}
We briefly review the definition of the Atiyah and Todd classes of DG manifolds using affine connections~\cite{MR3319134}.

Let $(\cM,Q)$ be a DG manifold. The graded space of $(1,2)$-tensor fields $\sections{\cotangent{\cM}^{\tensor 2} \tensor \tangent{\cM}}$ on $\cM$ is equipped with the differential $\liederivative{Q}$ defined as the Lie derivative along the homological vector field $Q$. Under the identification 
\[\sections{\cotangent{\cM}^{\tensor 2} \tensor \tangent{\cM}} \cong \sections{\Hom(\tangent{\cM}^{\tensor 2}, \tangent{\cM})},\]
the differential $\liederivative{Q}$ is defined by
\[\liederivative{Q}F(X,Y)=[Q, F(X,Y)] - (-1)^{\degree{F}}F([Q,X],Y) - (-1)^{\degree{F}+\degree{X}} F(X,[Q,Y])\]
for homogeneous $F\in \sections{\Hom(\tangent{\cM}^{\tensor 2}, \tangent{\cM})}$ and $X,Y\in \sections{\tangent{\cM}}$. The pair $(\sections{\cotangent{\cM}^{\tensor 2} \tensor \tangent{\cM}}, \liederivative{Q})$ forms a DG $(\smooth{\cM},Q)$-module, and in particular, a cochain complex.

Recall that an affine connection $\nabla$ on $\cM$ is a bilinear map
\[\nabla : \sections{\tangent{\cM}}\times \sections{\tangent{\cM}} \to \sections{\tangent{\cM}}\]
of degree $0$, satisfying
\[\nabla_{f\cdot X}Y=f\cdot \nabla_{X}Y, \qquad \nabla_{X}f\cdot Y = X(f) \cdot Y + (-1)^{\degree{X}\cdot \degree{f}} f\cdot \nabla_{X}Y \]
for homogeneous $f\in \smooth{\cM}$ and $X,Y\in \sections{\tangent{\cM}}$. 

Consider an element $\liederivative{Q}\nabla : \sections{\tangent{M}}\times \sections{\tangent{M}} \to \sections{\tangent{M}}$
defined by
\[ (\liederivative{Q}\nabla)(X,Y)=[Q,\nabla_{X}Y] - \nabla_{[Q,X]}Y -(-1)^{\degree{X}}\nabla_{X} [Q,Y]\]
for homogeneous $X,Y\in \sections{\tangent{\cM}}$. 

\begin{proposition}[\cite{MR3319134}]
Given a DG manifold $(\cM,Q)$ and an affine connection $\nabla$ on $\cM$, the following hold.
\begin{enumerate}
\item The element $\liederivative{Q}\nabla$ is a $(1,2)$-tensor field of degree $+1$.
\item The element $\liederivative{Q}\nabla$ is a $1$-cocycle in $(\sections{\cotangent{\cM}^{\tensor 2} \tensor \tangent{\cM}}, \liederivative{Q})$.
\item The cohomology class of $\liederivative{Q}\nabla$ is independent of the choice of $\nabla$.
\end{enumerate}
\end{proposition}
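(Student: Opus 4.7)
The plan is to establish the three assertions by direct algebraic manipulation, relying only on the derivation property of $Q$, the Leibniz rule for the affine connection, and the identity $[Q,Q]=0$. The core idea is that although $\nabla$ itself is not a tensor field, the ``failure to be $Q$-invariant'' expression $\liederivative{Q}\nabla$ kills precisely the non-tensorial Leibniz terms.

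For assertion~(1), the degree claim is immediate since $|Q|=1$. For tensoriality I would check $\smooth{\cM}$-linearity slot by slot. In the $X$-slot, using $[Q,fX] = Q(f)\,X + (-1)^{|f|}f[Q,X]$ and $\nabla_{fX}Y = f\nabla_{X}Y$, the two occurrences of $Q(f)\nabla_{X}Y$---one from expanding $[Q,f\nabla_{X}Y]$ and one from $\nabla_{[Q,fX]}Y$---cancel, leaving $(\liederivative{Q}\nabla)(fX,Y) = (-1)^{|f|}f(\liederivative{Q}\nabla)(X,Y)$. The $Y$-slot computation is longer but identical in spirit: the $Y$-terms $Q(X(f))$, $[Q,X](f)$, and $(-1)^{|X|}X(Q(f))$ assemble via the derivation identity $[Q,X](f) = Q(X(f)) - (-1)^{|X|}X(Q(f))$ and cancel, while the mixed $Q(f)\nabla_{X}Y$ and $X(f)[Q,Y]$ terms pair off, giving the expected Koszul sign $(-1)^{(1+|X|)|f|}f(\liederivative{Q}\nabla)(X,Y)$.

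For assertion~(2), I would extend $\liederivative{Q}$ to all graded bilinear maps $F : \sections{\tangent{\cM}}^{\tensor 2} \to \sections{\tangent{\cM}}$, tensorial or not, by the same formula, and check $\liederivative{Q}^{2} = 0$ on this larger space. On the hom-space, $\liederivative{Q}$ is the graded adjoint $\ad_{Q}$, so $\liederivative{Q}^{2} = \tfrac{1}{2}\ad_{[Q,Q]} = 0$. Concretely, expanding $\liederivative{Q}(\liederivative{Q}F)(X,Y)$ produces three kinds of terms---of shape $[Q,F([Q,\cdot],\cdot)]$, $F([Q,[Q,\cdot]],\cdot)$, and $F([Q,\cdot],[Q,\cdot])$---and a sign check shows the first and third kinds pair off, while the second vanishes because $[Q,[Q,X]]=0$. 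Specialising $F=\nabla$ and invoking part~(1) to interpret $\liederivative{Q}\nabla$ as an element of $(\sections{\cotangent{\cM}^{\tensor 2} \tensor \tangent{\cM}},\liederivative{Q})$, one obtains $\liederivative{Q}(\liederivative{Q}\nabla) = 0$.

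For assertion~(3), given two affine connections $\nabla,\nabla'$, their difference $T := \nabla - \nabla'$ is $\smooth{\cM}$-bilinear because the non-tensorial Leibniz terms $X(f)Y$ in $\nabla_{X}(fY)$ and $\nabla'_{X}(fY)$ agree and cancel; hence $T$ is a $(1,2)$-tensor of degree~$0$. Since the extended $\liederivative{Q}$ from~(2) is $\RR$-linear and agrees with the tensor-complex differential on genuine tensor fields, $\liederivative{Q}\nabla - \liederivative{Q}\nabla' = \liederivative{Q}T$ is a coboundary. The only genuine obstacle is keeping the Koszul signs correct throughout part~(2); conceptually, everything reduces to $[Q,[Q,\cdot]]=0$ together with the derivation identity $[Q,fX] = Q(f)X + (-1)^{|f|}f[Q,X]$.
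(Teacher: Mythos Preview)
Your proof is correct and complete: the tensoriality computation in part~(1), the extension of $\liederivative{Q}$ to arbitrary bilinear maps to obtain $\liederivative{Q}^{2}=0$ in part~(2), and the difference-of-connections argument in part~(3) are all standard and carried out with the right signs. Note, however, that the paper does not supply its own proof of this proposition at all---it is stated with a citation to~\cite{MR3319134} and used as background. So there is no ``paper's approach'' to compare against; your direct verification is exactly the kind of argument one finds in the original reference.
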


The element $\At^{\nabla} := \liederivative{Q}\nabla$ is called the Atiyah $1$-cocycle associated with the affine connection $\nabla$. The cohomology class 
\[\alpha_{(\cM,Q)}:=[\At^{\nabla}] \in H^{1}\big( \sections{\cotangent{\cM}^{\tensor 2} \tensor \tangent{\cM}}, \liederivative{Q} \big)\]
 is called the \textbf{Atiyah class} of the DG manifold $(\cM,Q)$. Since $\nabla$ is \emph{not} a $(1,2)$-tensor field, the Atiyah class defines a non-trivial invariant of $(\cM,Q)$. Indeed, the Atiyah class of the DG manifold $(\cM,Q)$ is an obstruction to the existence of an affine connection $\nabla$ on $\cM$ compatible with the homological vector field $Q$.

By identifying $\sections{\cotangent{\cM}^{\tensor 2} \tensor \tangent{\cM}} \cong \Omega_{\cM}^{1}(\End(\tangent{\cM}))$, we define the $k$-th power of the Atiyah cocycle $(\At^{\nabla})^{k}\in \Omega_{\cM}^{k}(\End(\tangent{\cM}))$ by the $k$-th composition of $\At^{\nabla}$, where $\Omega_{\cM}^{k}$ denotes the graded space of differential $k$-forms on $\cM$. We define the Todd cocycle associated with $\nabla$ by
\[\Td^{\nabla}:=\Ber\Bigg(\frac{\At^{\nabla}}{1-\exp(-\At^{\nabla})}\Bigg)\in \prod_{k\geq 0}((\Omega_{\cM}^{k})^{k},\liederivative{Q})\]
where $\Ber$ denotes the Berezinian~\cite{MR1632008}, and $(\Omega_{\cM}^{k})^{k}$ denotes the degree $k$-component of differential $k$-forms on $\cM$. The \textbf{Todd class} of $(\cM,Q)$ is the cohomology class
\[\td_{(\cM,Q)}=[\Td^{\nabla}] \in \prod_{k\geq 0} H^{k}((\Omega_{\cM}^{k})^{\bullet},\liederivative{Q}),\]
which is independent of the choice of the connection $\nabla$, hence is independent of the choice of Atiyah cocycles.

Our main theorem concerns the behaviour of the Atiyah class and the Todd class of DG manifolds of positive amplitude under weak equivalences.
Recall that, by Theorem~\ref{thm:main3}, a weak equivalence $\Psi:(\cM,Q)\to (\cN,R)$ between DG manifolds of positive amplitude induces an isomorphism 
\begin{equation}\label{eq:H12}
 H^{\bullet}(\sections{\cotangent{\cM}^{\tensor 2}\tensor \tangent{\cM}}, \liederivative{Q}) \cong H^{\bullet}(\sections{\cotangent{\cN}^{\tensor 2}\tensor \tangent{\cN}},\liederivative{R})
 \end{equation}
between cohomology groups of $(1,2)$-tensor fields on $(\cM,Q)$ and $(\cN,R)$. 

The following is the main result of this paper.
\begin{theorem}\label{thm:main4}
Let $\Psi:(\cM,Q) \to (\cN,R)$ be a weak equivalence between DG manifolds of positive amplitude. Then the Atiyah class $\alpha_{(\cM,Q)}$ is identified with $\alpha_{(\cN,R)}$ under the isomorphism~\eqref{eq:H12} on cohomology induced by $\Psi$.
\end{theorem}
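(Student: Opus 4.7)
The plan is to choose affine connections $\nabla^{\cM}$ on $(\cM,Q)$ and $\nabla^{\cN}$ on $(\cN,R)$ and to show that the cocycles $\alpha(\At^{\nabla^{\cM}})$ and $\beta(\At^{\nabla^{\cN}})$ represent the same cohomology class in the middle complex $(\sections{\cotangent{\cM}^{\tensor 2}\tensor \Psi^{\ast}\tangent{\cN}}, \liederivative{Q,R})$ of Corollary~\ref{cor:main3}. Since $\alpha$ and $\beta$ together realise the isomorphism~\eqref{eq:H12}, this will give the theorem.

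To exhibit the coboundary, I would introduce two auxiliary connection-like maps $\nabla^{1},\nabla^{2}:\XX(\cM)\times \XX(\cM)\to \sections{\Psi^{\ast}\tangent{\cN}}$: the pushforward
\[\nabla^{1}(X,Y):=\Psi_{\ast}(\nabla^{\cM}_{X}Y)\]
and the pullback
\[\nabla^{2}(X,Y):=(\Psi^{\ast}\nabla^{\cN})_{X}(\Psi_{\ast}Y),\]
where $\Psi^{\ast}\nabla^{\cN}$ denotes the pullback connection on the vector bundle $\Psi^{\ast}\tangent{\cN}$ over the graded manifold $\cM$. A direct verification using the Leibniz rules of $\nabla^{\cM}$ and $\Psi^{\ast}\nabla^{\cN}$ shows that each $\nabla^{i}$ is $\smooth{\cM}$-linear in $X$ and satisfies
\[\nabla^{i}(X, fY) = X(f)\cdot \Psi_{\ast}(Y) + (-1)^{\degree{X}\cdot \degree{f}} f\cdot \nabla^{i}(X,Y)\]
for $f\in \smooth{\cM}$; thus the difference $\nabla^{1}-\nabla^{2}$ is $\smooth{\cM}$-bilinear and defines an element of the middle complex.

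The main computation is to identify the two Lie derivatives. The identity $\liederivative{Q,R}\nabla^{1}=\alpha(\At^{\nabla^{\cM}})$ is immediate: since $\Psi^{\ast}\circ R=Q\circ \Psi^{\ast}$, the map $\Psi_{\ast}$ is a chain map from $(\XX(\cM),\liederivative{Q})$ to $(\sections{\Psi^{\ast}\tangent{\cN}},\liederivative{Q,R})$, and pulling $\Psi_{\ast}$ out of the three terms in the definition of $\liederivative{Q,R}\nabla^{1}$ produces $\Psi_{\ast}\circ (\liederivative{Q}\nabla^{\cM})=\alpha(\At^{\nabla^{\cM}})$. For the identity $\liederivative{Q,R}\nabla^{2}=\beta(\At^{\nabla^{\cN}})$, I would view $\cD:=\Psi^{\ast}\nabla^{\cN}$ as a connection on the DG vector bundle $(\Psi^{\ast}\tangent{\cN},\liederivative{Q,R})$ and compute that
\[\liederivative{Q,R}\nabla^{2}(X,Y) = \At^{\cD}(X,\Psi_{\ast}Y),\]
where $\At^{\cD}$ is the Atiyah cocycle of $\cD$ on this DG vector bundle; the naturality of the Atiyah cocycle under pullback of DG vector bundles then gives $\At^{\cD}=\Psi^{\ast}\At^{\nabla^{\cN}}$, and evaluating against $\Psi_{\ast}Y$ in the second variable yields $\widetilde{\At^{\nabla^{\cN}}}(\Psi_{\ast}X,\Psi_{\ast}Y)=\beta(\At^{\nabla^{\cN}})(X,Y)$, as required.

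Combining the above, $\alpha(\At^{\nabla^{\cM}})-\beta(\At^{\nabla^{\cN}}) = \liederivative{Q,R}(\nabla^{1}-\nabla^{2})$ is a coboundary in the middle complex, proving the theorem. The hard part will be the last naturality identification, which amounts to a sign-sensitive local computation in the graded setting; all other steps are formal manipulations with the definitions of the pushforward, the pullback connection, and the Lie derivative.
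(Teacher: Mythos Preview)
Your argument is correct and follows a slightly different route from the paper's. Both proofs hinge on the same identity for $\beta(\At^{\nabla^{\cN}})$: what you call the Atiyah cocycle of the pullback connection $\cD=\Psi^{\ast}\nabla^{\cN}$ on $(\Psi^{\ast}\tangent{\cN},\liederivative{Q,R})$ is exactly the formula the paper records as Eq.~\eqref{eq:BetaAt} and dismisses as ``straightforward to check''---so the step you flag as the hard part is in fact a direct unwinding of the pullback connection and of $\liederivative{Q,R}$, with no hidden subtlety beyond bookkeeping signs.

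Where you diverge is in the treatment of $\nabla^{\cM}$. The paper invokes Remark~\ref{rem:ALF} to reduce to the case where $\Psi$ is an acyclic linear fibration, so that $\tangent{\cM}\cong \ker\Psi_{\ast}\oplus\Psi^{\ast}\tangent{\cN}$ splits; it then \emph{chooses} $\nabla^{\cM}=\nabla^{\mathcal{K}}+\widetilde{\nabla}^{\cN}$ so that $\Psi_{\ast}\circ\nabla^{\cM}=\widetilde{\nabla}^{\cN}\circ\Psi_{\ast}$, i.e.\ your $\nabla^{1}=\nabla^{2}$ identically, and hence $\alpha(\At^{\nabla^{\cM}})=\beta(\At^{\nabla^{\cN}})$ as cocycles, not merely as classes. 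Your approach instead allows an arbitrary $\nabla^{\cM}$ and exhibits the explicit primitive $\nabla^{1}-\nabla^{2}\in\sections{\cotangent{\cM}^{\tensor 2}\tensor\Psi^{\ast}\tangent{\cN}}$. This is cleaner in that it avoids the reduction to fibrations altogether (since Corollary~\ref{cor:main3} already holds for weak equivalences by Remark~\ref{rem:PsiW}); the paper's version buys an on-the-nose equality of cocycles, which is useful if one later wants to track the identification through further constructions without coboundary corrections.
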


\begin{proof}
The isomorphism~\eqref{eq:H12} is represented by a pair of quasi-isomorphisms 
$\alpha$ and $\beta$ in Corollary~\ref{cor:main3} for $(p,q)=(1,2)$. 
We claim that there exist an affine connection $\nabla^{\cM}$ on $\cM$ and an affine connection $\nabla^{\cN}$ on $\cN$ 
such that $\alpha(\At^{\nabla^{\cM}})=\beta(\At^{\nabla^{\cN}})$.

Choose any affine connection $\nabla^{\cN}$ on $\cN$. 
Under the identification $\sections{\Psi^{\ast}\tangent{\cN}} \cong \smooth{\cM}\tensor_{\smooth{\cN}}\XX(\cN)$, the pullback connection of $\nabla^{\cN}$ along $\Psi$ is the $\tangent{\cM}$-connection $\widetilde{\nabla}^{\cN}$ on $\Psi^{\ast}\tangent{\cN}$, defined by
\[ \widetilde{\nabla}^{\cN}_{X} (f\tensor Y )=X(f)\tensor Y  + (-1)^{\degree{f}\cdot \degree{X}} f\cdot \nabla^{\cN}_{\Psi_{\ast}X}Y \]
for homogeneous $X\in \XX(\cM)$, $Y\in \XX(\cN)$ and $f\in \smooth{\cM}$. 
Then, it is straightforward to check that 
\begin{equation}\label{eq:BetaAt}
\beta(\At^{\nabla^{\cN}})(X_{1},X_{2})=\liederivative{Q,R}\big( \widetilde{\nabla}^{\cN}_{X_{1}} (\Psi_{\ast}X_{2})\big) - \widetilde{\nabla}^{\cN}_{\liederivative{Q}(X_{1})}(\Psi_{\ast}X_{2}) -(-1)^{\degree{X_{1}}}\widetilde{\nabla}^{\cN}_{X_{1}}(\liederivative{Q,R}(\Psi_{\ast}X_{2}))
\end{equation}
for homogeneous $X_{1},X_{2}\in \XX(\cM)$.

By Remark~\ref{rem:ALF}, we may assume that $\Psi$ is an acyclic linear fibration. 
By Remark~\ref{rem:KerPsi}, we obtain a short exact sequence of graded vector bundles over $\cM$:
\[
\begin{tikzcd}
 0 \arrow{r}& \ker \Psi_{\ast} \arrow{r}& \tangent{\cM} \arrow{r}{\Psi_{\ast}}& \Psi^{\ast}\tangent{\cN} \arrow{r}& 0,
 \end{tikzcd}
 \]
and each choice of splitting yields an isomorphism
\begin{equation}\label{eq:TMSplit}
 \tangent{\cM}\cong \ker \Psi_{\ast}\oplus \Psi^{\ast}\tangent{\cN}.
 \end{equation}

Choose a $\tangent{\cM}$-connection $\nabla^{\mathcal{K}}$ on $\ker \Psi_{\ast}$. 
Under the above isomorphism, define $\nabla^{\cM}:=\nabla^{\mathcal{K}}+\widetilde{\nabla}^{\cN}$. 
By construction, we have $\Psi_{\ast}\circ \nabla^{\cM} = \widetilde{\nabla}^{\cN} \circ \Psi_{\ast}$.
Therefore, a direct computation shows that
\[\alpha(\At^{\nabla^{\cM}})(X_{1},X_{2}) = \liederivative{Q,R}\big( \widetilde{\nabla}^{\cN}_{X_{1}} (\Psi_{\ast}X_{2})\big) - \widetilde{\nabla}^{\cN}_{\liederivative{Q}(X_{1})}(\Psi_{\ast}X_{2}) -(-1)^{\degree{X_{1}}}\widetilde{\nabla}^{\cN}_{X_{1}}(\liederivative{Q,R}(\Psi_{\ast}X_{2}))\]
for homogeneous $X_{1},X_{2}\in \XX(\cM)$.
Hence, $\alpha(\At^{\nabla^{\cM}})=\beta(\At^{\nabla^{\cN}})$, completing the proof.
\end{proof}

As an immediate consequence of Remark~\ref{rem:Sym} and Theorem~\ref{thm:main4}, we obtain a similar result for Todd classes.
\begin{theorem}\label{thm:main5}
Under the assumption of Theorem~\ref{thm:main4}, the Todd class $\td_{(\cM,Q)}$ is identified with $\td_{(\cN,R)}$ under the isomorphism of cohomology induced by the weak equivalence $\Psi$.
\end{theorem}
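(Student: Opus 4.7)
The plan is to reduce Theorem~\ref{thm:main5} to Theorem~\ref{thm:main4} by choosing affine connections for which the Atiyah cocycle on $\cM$ has block-triangular form with respect to a splitting of $\tangent{\cM}$, so that the Berezinian defining the Todd cocycle factorises multiplicatively. The residual factor, coming from an acyclic DG vector bundle, will then have to be shown to be cohomologous to $1$.

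Following the reduction explained in Remark~\ref{rem:ALF}, I would first restrict to the case where $\Psi$ is an acyclic linear fibration. I would then reuse the connections from the proof of Theorem~\ref{thm:main4}: an arbitrary affine connection $\nabla^{\cN}$ on $\cN$, and $\nabla^{\cM}=\nabla^{\mathcal{K}}\oplus\widetilde{\nabla}^{\cN}$ on $\cM$, with respect to a splitting $\tangent{\cM}\cong \ker\Psi_{\ast}\oplus \Psi^{\ast}\tangent{\cN}$ of graded vector bundles. Since $\Psi_{\ast}$ is a morphism of DG vector bundles, the sub-bundle $\ker\Psi_{\ast}$ is $\liederivative{Q}$-invariant, so this splitting makes $\At^{\nabla^{\cM}}$ into a $1$-form with values in $\End(\tangent{\cM})$ of block upper-triangular form
\[
\At^{\nabla^{\cM}}=\begin{pmatrix} \At^{\mathcal{K}} & \ast \\ 0 & \widetilde{\At}^{\cN} \end{pmatrix},
\]
whose diagonal blocks are the Atiyah cocycle $\At^{\mathcal{K}}=\liederivative{Q}\nabla^{\mathcal{K}}$ of the acyclic DG vector bundle $(\ker\Psi_{\ast},\liederivative{Q}|_{\ker\Psi_{\ast}})$ and the cocycle $\widetilde{\At}^{\cN}=\liederivative{Q,R}\widetilde{\nabla}^{\cN}$, which, under the natural identification mirroring the intermediate complex of Corollary~\ref{cor:main3}, corresponds to the pullback $\Psi^{\star}\At^{\nabla^{\cN}}$.

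Multiplicativity of the Berezinian on block-triangular matrices would then yield the cocycle-level factorisation
\[
\Td^{\nabla^{\cM}}=\Td^{\mathcal{K}}\cdot \Psi^{\star}\Td^{\nabla^{\cN}} \in \prod_{k\geq 0}(\Omega_{\cM}^{k})^{k},
\]
where $\Td^{\mathcal{K}}:=\Ber(\At^{\mathcal{K}}/(1-\exp(-\At^{\mathcal{K}})))$ is the natural Todd cocycle of the DG vector bundle $\ker\Psi_{\ast}$. Since the pullback $\Psi^{\star}$ on differential forms represents the cohomology isomorphism induced by $\Psi$---by Theorem~\ref{thm:main2} together with its antisymmetric variant noted in Remark~\ref{rem:Sym}---the theorem will follow once it is shown that $[\Td^{\mathcal{K}}]=[1]$ in $H^{\bullet}(\Omega(\cM),\liederivative{Q})$.

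The hard part will be this last step: the assertion that the Todd cocycle of an acyclic DG vector bundle is cohomologically trivial. Equivalently, I need to show that for every $n\geq 1$ the supertrace $\str((\At^{\mathcal{K}})^{n})\in(\Omega_{\cM}^{n})^{n}$ is $\liederivative{Q}$-exact. I would approach this locally via Lemma~\ref{lem:Locality} applied to the fine sheaf $\Omega(\cM)$ on $M$: over each open neighbourhood $U$ produced by Lemma~\ref{lem:U-Exact}, the bundle $\ker\Psi_{\ast}|_{U}$ is fibre-wise acyclic with explicit contracting data $\{\eta_{i}\}$, and a Chern--Simons-type transgression built from these homotopies would produce a local primitive of $\str((\At^{\mathcal{K}})^{n})$. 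A partition-of-unity argument on $M$ would then glue the local primitives into a global one, finishing the proof of $[\Td^{\mathcal{K}}]=[1]$ and hence of Theorem~\ref{thm:main5}.
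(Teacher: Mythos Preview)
Your block-triangular reduction and the Berezinian factorisation are correct, but this is a substantial detour compared with the paper, and the step you flag as ``hard'' is both unnecessary and incompletely argued.

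The paper treats Theorem~\ref{thm:main5} as an immediate consequence of Theorem~\ref{thm:main4} together with Remark~\ref{rem:Sym}, with the mechanism made explicit in Remark~\ref{rem:Str}: writing
\[
\Td^{\nabla}=\exp\Big(-\sum_{k\geq 1}\tfrac{B_{k}}{k\cdot k!}\,\str\big((\At^{\nabla})^{k}\big)\Big),
\]
it suffices that the scalar Atiyah cocycles $\str((\At^{\nabla})^{k})$ correspond under the cohomology isomorphism. But the quasi-isomorphisms $\alpha,\beta$ of Corollary~\ref{cor:main3} are defined by post-composition with $\Psi_{\ast}$, so for $(p,q)=(1,1)$ they intertwine composition of endomorphisms and commute with $\str$; combined with Remark~\ref{rem:Sym} on the form factor, the scalar Atiyah classes---and hence the Todd classes---match automatically once the Atiyah cocycles do. No factorisation, no kernel bundle, no transgression.

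If you want to salvage your route, the local Chern--Simons sketch is vague and not the right tool. The clean way to prove $[\Td^{\mathcal{K}}]=[1]$ is global: $\sections{\ker\Psi_{\ast}}$ is acyclic (this is exactly what the proof of Proposition~\ref{prop:PsiMap} establishes, via Lemma~\ref{lem:Equiv}), so Lemma~\ref{lem:KeyLem} applied with $C=\sections{\ker\Psi_{\ast}}$ and $\cE=\Lambda^{n}\cotangent{\cM}\tensor(\ker\Psi_{\ast})^{\vee}$ shows that $\sections{\Lambda^{n}\cotangent{\cM}\tensor\End(\ker\Psi_{\ast})}$ is acyclic for every $n$. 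Hence the closed element $(\At^{\mathcal{K}})^{n}$ is $\liederivative{Q}$-exact there, and since $\str$ is a chain map, $\str((\At^{\mathcal{K}})^{n})$ is exact in $(\Omega_{\cM}^{n})^{\bullet}$. This gives $[\Td^{\mathcal{K}}]=[1]$ without any local construction or partition of unity.
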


\begin{remark}\label{rem:Str}
The Todd cocycle $\Td^{\nabla}$ associated with $\nabla$ can be described in terms of the scalar Atiyah cocycles $c_{k}=\frac{1}{k!} \big(\frac{i}{2\pi}\big)^{k}\str \big((\At^{\nabla})^{k}\big) \in ((\Omega_{\cM}^{k})^{k},\liederivative{Q})$, where $\str$ denotes the supertrace~\cite{MR3319134}. 
It follows from the identity $\Ber(\exp(A))=\exp(\str(A))$ for a supermatrix $A$ that
\[\Td^{\nabla}=\exp\Big(-\sum_{k=1}^{\infty}\frac{B_{k}}{k\cdot k!} \str\big((\At^{\nabla})^{k}\big)\Big),\]
where $B_{k}$ are Bernoulli numbers.

In this context, Theorem~\ref{thm:main5} can alternatively be proved by showing the invariance of scalar Atiyah cocycles under weak equivalences. Indeed, when $(p,q)=(1,1)$, the quasi-isomorphisms in Corollary~\ref{cor:main3} respect the supertrace $\str$. Combined with Corollary~\ref{cor:KeyLem2} and Remark~\ref{rem:Sym} for $k$-forms, this implies that the scalar Atiyah cocycles remain unchanged.
\end{remark}

\subsection{Hochschild cohomology}

Following~\cite{MR2062626}, the Hochschild cohomology of a smooth manifold $M$ is defined as the cohomology of the cochain complex of poly-differential operators on $M$.
Likewise, we define the Hochschild cohomology of a DG manifold $(\cM,Q)$ using the cochain complex induced by poly-differential operators on $(\cM,Q)$. This Hochschild cohomology fits into the framework of the Kontsevich formality theorem, and moreover, the Duflo--Kontsevich-type theorem for DG manifolds~\cite{MR3754617}.
Below, we briefly describe the construction.

Given a DG manifold $(\cM,Q)$, denote the graded space of differential operators on $\cM$ by $\cD(\cM)$. The homological vector field $Q$ naturally induces a differential $\llbracket Q, \argument \rrbracket$ on $\cD(\cM)$ defined by the graded commutator with $Q$. The pair $(\cD(\cM),\llbracket Q,\argument \rrbracket)$ forms a DG $(\smooth{\cM},Q)$-module, and in particular, a cochain complex. 

For each $p\geq 0$, we denote by $(\pDpoly^{p}(\cM), \llbracket Q,\argument \rrbracket)$ the DG $(\smooth{\cM},Q)$-module obtained as the tensor product of $p$ copies of $\cD(\cM)$ over $\smooth{\cM}$. 
We denote the degree $q$ component of $\pDpoly^{p}(\cM)$ by $\big(\pDpoly^{p}(\cM)\big)^{q}$.
Note that the space $\pDpoly^{n}(\cM)$ can be viewed as a subspace of $\Hom(\smooth{\cM}^{\tensor n}, \smooth{\cM})$. 
From this viewpoint, the Hochschild differential 
\[\hochschild: \big(\pDpoly^{n}(\cM)\big)^{\bullet} \to \big(\pDpoly^{n+1}(\cM)\big)^{\bullet}\]
compatible with $\llbracket Q,\argument \rrbracket$ is defined. Thus, we obtain a cochain complex of cochain complexes,
\[0\to (\smooth{\cM},Q) \xto{\hochschild} (\cD(\cM), \llbracket Q,\argument \rrbracket) \xto{\hochschild} \cdots \xto{\hochschild}(\pDpoly^{n}(\cM), \llbracket Q,\argument \rrbracket)\xto{\hochschild} \cdots, \]
which forms a double complex.

Totalising the double complex by taking direct sum along the diagonal, we obtain a total complex
\[\Dpoly^{\bullet}(\cM)=\bigoplus_{\bullet=p+q} \big(\pDpoly^{p}(\cM)\big)^{q}\]
with differential given by $\hochschild + \llbracket Q,\argument \rrbracket$, where $\hochschild$ increases $p$ and $\llbracket Q,\argument \rrbracket$ increases $q$. 
We define the Hochschild cohomology of $(\cM,Q)$ by the cohomology group of $(\Dpoly^{\bullet}(\cM),\hochschild + \llbracket Q,\argument \rrbracket)$:
\[
 \grHH^{\bullet}(\cM,Q):=H^{\bullet}(\Dpoly^{\bullet}(\cM), \hochschild + \llbracket Q,\argument \rrbracket).
\]
The Hochschild cohomology $\grHH^{\bullet}(\cM,Q)$ carries a natural Gerstenhaber algebra structure: the product is the cup product $\cup$, and the Lie bracket is the Gerstenhaber bracket $\llbracket \argument, \argument \rrbracket$, induced by the graded commutator on $\cD(\cM)$.

\begin{remark}
As above, we define the Hochschild cohomology of $(\cM,Q)$ using the direct sum totalisation of the double complex $(\big(\pDpoly^{\bullet}(\cM)\big)^{\bullet}, \hochschild, \llbracket Q, \argument \rrbracket)$. This construction fits into the framework of the Duflo--Kontsevich-type theorem for DG manifolds~\cite{MR3754617}.
However, one may alternatively consider the \emph{direct product} totalisation, instead of the direct sum totalisation. In general, the cohomology groups obtained from these two totalisations are different~\cite{MR4584414}; see also~\cite{MR2931331}.
Moreover, an analogue of the Duflo--Kontsevich-type theorem for DG manifolds does not hold when working with the direct product totalisation~\cite{MR2202177}.
\end{remark}

Our goal in this section is to prove that the Hochschild cohomology $\grHH(\cM,Q)$ of DG manifolds of positive amplitude $(\cM,Q)$ is invariant under weak equivalences. To do so, we use the cochain complex of poly-vector fields and the Hochschild--Kostant--Rogenberg (HKR) theorem for DG manifolds~\cite[Lemma~A.2]{MR2304327}. 

As in the case of poly-differential operators, we define the cochain complex of poly-vector fields by
\[ \Tpoly^{\bullet}(\cM):=\bigoplus_{\bullet=p+q}\big(\sections{\Lambda^{p} \tangent{\cM}}\big)^{q}\]
with differential $\liederivative{Q}$, the Lie derivative along $Q$.
Here, $\big(\sections{\Lambda^{p} \tangent{\cM}}\big)^{q}$ denotes the degree $q$ component of $p$-vector fields, and for each $p\geq 0$, the differential $\liederivative{Q}$ increases the internal degree $q$ of $\sections{\Lambda^{p}\tangent{\cM}}$.
The cohomology group $H^{\bullet}(\Tpoly^{\bullet}(\cM), \liederivative{Q})$ is also endowed with a Gerstenhaber algebra structure: the product is the wedge product $\wedge$, and the Lie bracket is the Schouten--Nijenhuis bracket $[\argument, \argument]$.

The HKR theorem asserts that the cochain complexes of poly-vector fields and poly-differential operators are quasi-isomorphic. More precisely, the HKR map
\[\hkr : (\Tpoly^{\bullet}(\cM), \liederivative{Q}) \to (\Dpoly^{\bullet}(\cM),\hochschild + \llbracket Q,\argument \rrbracket),\]
defined by $\hkr(f)=f$ for $f\in \smooth{\cM}$, and for $k\geq 1$ and $X_{1},\ldots, X_{k}\in \XX(\cM)$,
\[ \hkr(X_{1}\wedge \cdots \wedge X_{k})=\frac{1}{k!} \sum_{\sigma\in S_{k}} \kappa(\sigma) X_{\sigma(1)}\tensor \cdots \tensor X_{\sigma(k)}\]
is a quasi-isomorphism.
Here, $S_{k}$ denotes the symmetric group of degree $k$, and $\kappa(\sigma)=\pm 1$ denotes the sign satisfying
$ X_{1}\wedge\cdots \wedge X_{k}=\kappa(\sigma) X_{\sigma(1)}\wedge \cdots \wedge X_{\sigma(k)}$. 

In the positive amplitude setting, we obtain the following lemma.
\begin{lemma}\label{lem:TQI}
Let $\Psi:(\cM,Q)\to (\cN,R)$ be a weak equivalence between DG manifolds of positive amplitude. Then the cochain complexes $(\Tpoly^{\bullet}(\cM), \liederivative{Q})$ and $(\Tpoly^{\bullet}(\cN), \liederivative{R})$ are quasi-isomorphic.
\end{lemma}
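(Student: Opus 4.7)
The plan is to exploit the fact that on $\Tpoly^{\bullet}(\cM)$ the differential $\liederivative{Q}$ strictly preserves the exterior degree $p$: for each fixed $p$, it acts internally on the DG module $\bigl(\sections{\Lambda^{p}\tangent{\cM}},\liederivative{Q}\bigr)$ by raising the internal degree $q$ by one, with no Hochschild-like increment between different values of $p$. Consequently, the total complex splits as an external direct sum of shifted complexes
$$
(\Tpoly^{\bullet}(\cM),\liederivative{Q})\;=\;\bigoplus_{p\geq 0}\bigl(\sections{\Lambda^{p}\tangent{\cM}},\liederivative{Q}\bigr)[-p],
$$
and an identical decomposition holds for $(\cN,R)$. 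Since direct sums are exact and hence cohomology commutes with them, it suffices to produce, for every $p\geq 0$, a zigzag of quasi-isomorphisms connecting the $p$-th summands that can be assembled coherently over all $p$.

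First I would reduce, via Remark~\ref{rem:ALF}, to the case where $\Psi$ is an acyclic linear fibration. For each fixed $p$, Theorem~\ref{thm:main3} together with Remark~\ref{rem:Sym} supplies a zigzag of quasi-isomorphisms
$$
\bigl(\sections{\Lambda^{p}\tangent{\cM}},\liederivative{Q}\bigr)\xto{\alpha_{p}}\bigl(\sections{\Lambda^{p}\Psi^{\ast}\tangent{\cN}},\liederivative{Q,R}\bigr)\xleftarrow{\beta_{p}}\bigl(\sections{\Lambda^{p}\tangent{\cN}},\liederivative{R}\bigr),
$$
where $\alpha_{p}$ is induced by postcomposing each antisymmetric tensor factor with $\Psi_{\ast}$, and $\beta_{p}$ is the corresponding pullback along $\Psi$ as in Corollary~\ref{cor:main3}. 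The explicit formulas of Corollary~\ref{cor:main3} act factor-wise on tensors, and so, as already noted in Remark~\ref{rem:Sym}, they preserve the antisymmetrisation idempotent and descend to honest maps on $\Lambda^{p}$. In particular the maps $\alpha_{p}$ and $\beta_{p}$ do not mix different exterior degrees.

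Finally I take the direct sum $\alpha := \bigoplus_{p\geq 0}\alpha_{p}[-p]$ and $\beta := \bigoplus_{p\geq 0}\beta_{p}[-p]$, producing a single zigzag
$$
(\Tpoly^{\bullet}(\cM),\liederivative{Q})\xto{\alpha}\bigoplus_{p\geq 0}\bigl(\sections{\Lambda^{p}\Psi^{\ast}\tangent{\cN}},\liederivative{Q,R}\bigr)[-p]\xleftarrow{\beta}(\Tpoly^{\bullet}(\cN),\liederivative{R}).
$$
Since a direct sum of quasi-isomorphisms is a quasi-isomorphism, both $\alpha$ and $\beta$ are quasi-isomorphisms, yielding the desired quasi-isomorphism of $\Tpoly^{\bullet}$-complexes. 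I do not anticipate any substantive obstacle here: the lemma is essentially a bookkeeping consequence of the $p$-by-$p$ statement of Theorem~\ref{thm:main3}, the only point needing a brief check being that the maps of Corollary~\ref{cor:main3} respect antisymmetrisation, which is precisely the content of Remark~\ref{rem:Sym}.
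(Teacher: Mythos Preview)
Your proposal is correct and follows essentially the same route as the paper: both invoke the $p$-by-$p$ quasi-isomorphisms of Corollary~\ref{cor:main3} and Remark~\ref{rem:Sym} and then assemble them into a quasi-isomorphism of the total complexes. The paper packages the assembly step as a spectral sequence comparison, whereas you observe more directly that $\liederivative{Q}$ preserves the exterior degree $p$, so the total complex is literally the direct sum $\bigoplus_{p}(\sections{\Lambda^{p}\tangent{\cM}},\liederivative{Q})[-p]$ and a direct sum of quasi-isomorphisms is a quasi-isomorphism; this is a slightly cleaner argument for the same fact, and your preliminary reduction to acyclic linear fibrations is harmless but unnecessary since Corollary~\ref{cor:main3} is already stated for arbitrary weak equivalences.
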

\begin{proof}
By Corollary~\ref{cor:main3} and Remark~\ref{rem:Sym}, for each $p\geq 0$, there exists a pair of quasi-isomorphisms
\[
\begin{tikzcd}
(\sections{\Lambda^{p}\tangent{\cM}}, \liederivative{Q}) \arrow{r}{\alpha} &(\sections{\Lambda^{p}\Psi^{\ast}\tangent{\cN}},\liederivative{Q,R})& \arrow[swap]{l}{\beta} (\sections{\Lambda^{p}\tangent{\cN}},\liederivative{R}).
\end{tikzcd}
\]
By applying the standard spectral sequence argument (see, for instance,~\cite[Theorem~5.5.11 and Section~5.6]{MR1269324}) to each quasi-isomorphism, the cochain complexes $(\Tpoly^{\bullet}(\cM), \liederivative{Q})$ and $(\Tpoly^{\bullet}(\cN), \liederivative{R})$ are quasi-isomorphic. Indeed, both maps $\alpha$ and $\beta$ induce isomorphisms on the second page of their respective spectral sequences---the first page differential corresponds to the internal differential $\liederivative{Q}$, $\liederivative{Q,R}$ or $\liederivative{R}$, while the zeroth page has zero differential. This completes the proof.
\end{proof}

Denote by $\Psi_{T}: H^{\bullet}(\Tpoly^{\bullet}(\cM), \liederivative{Q}) \to H^{\bullet}(\Tpoly^{\bullet}(\cN), \liederivative{R})$ the isomorphism of graded vector spaces induced by Lemma~\ref{lem:TQI}.
Together with HKR theorem, one obtains an isomorphism of graded vector spaces
\begin{equation}\label{eq:PsiD}
\Psi_{D}:\grHH(\cM,Q)\to \grHH(\cN,R)
\end{equation}
such that the diagram below commutes:
\begin{equation}\label{diag:HKR}
\begin{tikzcd}
 H^{\bullet}(\Tpoly^{\bullet}(\cM), \liederivative{Q}) \arrow{d}{\Psi_{T}} \arrow{r}{\hkr} & H^{\bullet}(\Dpoly^{\bullet}(\cM),\hochschild + \llbracket Q,\argument \rrbracket) \arrow{d}{\Psi_{D}} =\grHH^{\bullet}(\cM,Q)  \\
 H^{\bullet}(\Tpoly^{\bullet}(\cN), \liederivative{R}) \arrow{r}{\hkr} & H^{\bullet}(\Dpoly^{\bullet}(\cN),\hochschild + \llbracket R,\argument \rrbracket) = \grHH^{\bullet}(\cN,R)  
\end{tikzcd}
\end{equation}

While the HKR map $\hkr$ induces an isomorphism of graded vector spaces on cohomology, it does not preserve the associative and Lie algebra structures. Nevertheless, the isomorphism $\Psi_{D}$ preserves the Gerstenhaber algebra structure.

\begin{theorem}\label{thm:maingrHH1}
Let $\Psi:(\cM,Q)\to (\cN,R)$ be a weak equivalence between DG manifolds of positive amplitude. Then $\Psi_{D}$ defined in~\eqref{eq:PsiD} is an isomorphism of Gerstenhaber algebras.
\end{theorem}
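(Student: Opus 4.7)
The map $\Psi_D$ is already an isomorphism of graded vector spaces by construction of the diagram \eqref{diag:HKR}, so I need only verify compatibility with the cup product $\cup$ and the Gerstenhaber bracket $\llbracket -, - \rrbracket$. The plan is to reduce both to the analogous statement for $\Psi_T$ on $H^\bullet(\Tpoly)$, and then transfer these identities across the HKR isomorphism via \eqref{diag:HKR}.

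First I would show that $\Psi_T$ is an isomorphism of Gerstenhaber algebras, where $H^\bullet(\Tpoly)$ carries the wedge product $\wedge$ and Schouten--Nijenhuis bracket $[-,-]$. By Corollary \ref{cor:main3} (specialised to antisymmetric tensors as in Remark \ref{rem:Sym}), restricted to polyvector fields the quasi-isomorphisms $\alpha, \beta$ have explicit formulas
\[\alpha(X_1 \wedge \cdots \wedge X_p) = \Psi_\ast X_1 \wedge \cdots \wedge \Psi_\ast X_p, \qquad \beta(Y_1 \wedge \cdots \wedge Y_p) = I(Y_1) \wedge \cdots \wedge I(Y_p),\]
so both are manifestly multiplicative at the cochain level, giving the algebra compatibility of $\Psi_T = [\beta]^{-1} \circ [\alpha]$ on cohomology. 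For the Schouten bracket, I would invoke its universal characterisation as the unique biderivation on $\sections{\Lambda^\bullet \tangent{\cM}}$ reducing to $X(f)$ on $\XX(\cM) \times \smooth{\cM}$ and to the commutator on $\XX(\cM) \times \XX(\cM)$; since $\Psi_T$ intertwines $\Psi^\ast$ on functions and is determined on vector field classes by the relation $\Psi_\ast X \sim I(Y)$ in $(\sections{\Psi^\ast \tangent{\cN}}, \liederivative{Q,R})$, these determining relations are respected after passing to cohomology.

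Second, I would transfer through the HKR map. The map $\hkr$ induces an isomorphism of Gerstenhaber algebras on cohomology: the cup product compatibility is immediate from the antisymmetrisation formula defining $\hkr$, and the Lie bracket compatibility in the DG setting follows from the Kontsevich--Duflo-type theorem of Liao--Stiénon--Xu \cite{MR3754617}, which furnishes an $L_\infty$ quasi-isomorphism between $(\Tpoly, \liederivative{Q})$ and $(\Dpoly, \hochschild + \llbracket Q, -\rrbracket)$ whose leading Taylor coefficient is $\hkr$. Combining with the commutativity of \eqref{diag:HKR}, we conclude that $\Psi_D = \hkr \circ \Psi_T \circ \hkr^{-1}$ preserves both structures.

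The main obstacle I foresee is the Schouten bracket compatibility of $\Psi_T$, since the intermediate complex $\sections{\Lambda^\bullet \Psi^\ast \tangent{\cN}}$ carries no natural Lie algebroid structure, and hence no chain-level Schouten bracket to intertwine; the argument must therefore proceed at the cohomology level via the universal property of $[-,-]$, rather than by a direct chain-level comparison. A secondary subtlety is the Lie compatibility of $\hkr$ on cohomology in the DG manifold setting, which I would cite from \cite{MR3754617} rather than reprove.
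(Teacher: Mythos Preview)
Your argument has a genuine gap in the second step: the claim that $\hkr$ alone induces an isomorphism of \emph{associative} algebras on cohomology is false for DG manifolds. The antisymmetrisation formula does not make $\hkr$ multiplicative at the cochain level---for instance $\hkr(X_1)\cup\hkr(X_2)=X_1\tensor X_2$ while $\hkr(X_1\wedge X_2)$ is the full antisymmetrisation---and while the discrepancy is a Hochschild coboundary in the classical case, it is not in general a coboundary for the total differential $\hochschild+\llbracket Q,\argument\rrbracket$. This is exactly the point of the Duflo--Kontsevich-type theorem of \cite{MR3754617}: it is $\hkr\circ(\td_{(\cM,Q)})^{\half}$, not $\hkr$, that gives an isomorphism of Gerstenhaber algebras on cohomology. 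Your citation of \cite{MR3754617} for the Lie bracket via the $L_\infty$ quasi-isomorphism is fine, but it does not give the product compatibility you assert.

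The paper's proof repairs this by inserting the Todd twist on both sides and invoking Theorem~\ref{thm:main5} to show that $\Psi_T$ commutes with the action of $(\td)^{\half}$ by contraction; then the composite $\hkr\circ(\td)^{\half}$ is a Gerstenhaber isomorphism on each side, and the square built from \eqref{diag:HKR} together with the Todd-twist square \eqref{diag:Td} forces $\Psi_D$ to be one as well. In other words, the invariance of the Todd class under weak equivalences is not incidental to this theorem---it is the mechanism by which the associative structure is transported, and your proposal omits it.
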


\begin{proof}
The Duflo--Kontsevich-type theorem for DG manifolds~\cite[Theorem~4.3]{MR3754617} asserts that
the composition of the square root of the Todd class $(\td_{(\cM,Q)})^{\half}$ with the HKR map on cohomology
\[\hkr \circ (\td_{(\cM,Q)})^{\half}:H^{\bullet}(\Tpoly^{\bullet}(\cM), \liederivative{Q}) \to H^{\bullet}(\Dpoly^{\bullet}(\cM),\hochschild + \llbracket Q,\argument \rrbracket)\cong \grHH^{\bullet}(\cM,Q) \]
is an isomorphism of Gerstenhaber algebras, preserving both the products and Lie brackets. Here, $(\td_{(\cM,Q)})^{\half}$ acts on $H^{\bullet}(\Tpoly(\cM), \liederivative{Q})$ by contraction.

Using the explicit formulas in Corollary~\ref{cor:main3},
it is straightforward to check that the quasi-isomorphisms $\alpha$ and $\beta$ in the proof of Lemma~\ref{lem:TQI} respect both the wedge product and Schouten--Nijenhuis bracket. Consequently, $\Psi_{T}$ is an isomorphism of Gerstenhaber algebras. 
By Theorem~\ref{thm:main5} (see also Remark~\ref{rem:Str}), the diagram
\begin{equation}\label{diag:Td}
\begin{tikzcd}
H^{\bullet}(\Tpoly^{\bullet}(\cM), \liederivative{Q}) \arrow{rr}{(\td_{(\cM,Q)})^{\half}} \arrow{d}{\Psi_{T}} && H^{\bullet}(\Tpoly^{\bullet}(\cM), \liederivative{Q}) \arrow{d}{\Psi_{T}}\\
H^{\bullet}(\Tpoly^{\bullet}(\cN), \liederivative{R}) \arrow{rr}{(\td_{(\cN,R)})^{\half}} && H^{\bullet}(\Tpoly^{\bullet}(\cN), \liederivative{R}) 
\end{tikzcd}
\end{equation}
is commutative. Combining commutative diagrams~\eqref{diag:HKR} and~\eqref{diag:Td} with the Duflo--Kontsevich-type theorem, we conclude that $\Psi_{D}$ in~\eqref{eq:PsiD} is an isomorphism of Gerstenhaber algebras. This completes the proof.
\end{proof}

\section*{Acknowledgment}
The author is grateful to Zhuo Chen, Hsuan-Yi Liao, Maosong Xiang and Ping Xu for their helpful comments and interest in this work.

\appendix
\section{homotopy invariance of graded vector bundles}
It is a classical theorem that (see, for instance, \cite[Theorem~3.4.7]{MR1249482}, \cite[Section~VII.7.18]{MR336651}) the isomorphism classes of vector bundles are homotopy invariant. 
\begin{theorem}
Let $f:M\times [0,1]\to N$ be a morphism of smooth manifolds, and let $E\to N$ be a vector bundle. Then $f_{0}^{\ast}E \cong f_{1}^{\ast}E$ as vector bundles over $M$, where $f_{t}(m):=f(m,t)$ for all $m\in M$.
\end{theorem}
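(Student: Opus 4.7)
The plan is to exhibit an explicit isomorphism via parallel transport along the $[0,1]$-direction. First I would form the pullback bundle $\widetilde{E}:=f^{\ast}E\to M\times[0,1]$, observe that its restriction to $M\times\{t\}$ is canonically identified with $f_{t}^{\ast}E$, and equip $\widetilde{E}$ with a smooth connection $\nabla$ (which exists by a standard partition-of-unity argument on $M\times[0,1]$).

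Next, for each $m\in M$, I would consider the curve $\gamma_{m}:[0,1]\to M\times[0,1]$, $t\mapsto(m,t)$, and define $P_{m}:f_{0}^{\ast}E|_{m}\to f_{1}^{\ast}E|_{m}$ to be parallel transport along $\gamma_{m}$ with respect to $\nabla$. Parallel transport is governed by a linear first-order ODE along each $\gamma_{m}$; because $[0,1]$ is compact and the ODE is linear, the solution exists on the full interval, so $P_{m}$ is well defined and is a linear isomorphism whose inverse is parallel transport from $1$ back to $0$. Assembling these pointwise isomorphisms yields a map $P:f_{0}^{\ast}E\to f_{1}^{\ast}E$ covering $\id_{M}$.

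It remains to verify that $P$ is smooth. This is a consequence of smooth dependence of ODE solutions on parameters, with $m\in M$ playing the role of a parameter. Concretely, I would work in a local trivialisation of $\widetilde{E}$ along an arc $\{m_{0}\}\times[0,1]$---such a trivialisation exists because the arc is compact and can be covered by finitely many trivialising open sets patched together---and in this trivialisation $\nabla$ is represented by a smooth connection one-form $A(m,t)$. The parallel transport equation becomes a linear system whose coefficients depend smoothly on $(m,t)$, and standard ODE theory then gives smoothness of $P_{m}$ in $m$.

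The main (and really the only) obstacle is constructing a smooth trivialisation of $\widetilde{E}$ in a neighbourhood of the compact arc $\{m_{0}\}\times[0,1]$; once this is carried out by a finite-cover/partition-of-unity argument, all remaining steps are routine linear ODE theory. For the graded version implicit in the appendix's title, one either chooses a connection respecting the grading---equivalently, applies the argument to each homogeneous summand separately---so that the resulting isomorphism is degree-preserving.
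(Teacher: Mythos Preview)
Your proposal is correct and is precisely the parallel-transport/ODE argument the paper itself employs: although the paper quotes this classical statement without proof (citing standard references), its proof of the graded analogue in the appendix proceeds exactly as you outline---choose a connection on the pullback bundle, solve $\nabla_{\partial_t}e_I=0$ as a linear first-order ODE in local trivialisations, and use existence/uniqueness to obtain the isomorphism $P_t$. The only cosmetic difference is that the paper phrases the ODE at the level of sections (so smoothness in $m$ is automatic from the section-level solution) rather than fibrewise with a separate smooth-dependence argument, but the content is the same.
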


Suppose we are given a vector bundle $E\to M$. Denote a point $e\in E$ by $e=(m,v)$ where $m\in M$ and $v\in E_{m}$ is a vector in the fibre over $m$. 
Then the map $g_{t}:E \to E$ defined by $g_{t}(m,v)=(m,tv)$ defines a homotopy between the identity on $E$ and the projection onto $M\subset E$. By the above theorem, we obtain the following:
\begin{corollary}
Let $\pi: E\to M$ be a vector bundle, and let $F\to E$ be a vector bundle over $E$. Then there is a vector bundle $F_{0}\to M$ such that $F\cong \pi^{\ast}F_{0}$.
\end{corollary}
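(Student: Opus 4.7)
The plan is to apply the homotopy invariance theorem stated immediately above to the scalar-multiplication homotopy $g_t$ on $E$ that was just introduced. I would begin by packaging $g_t$ into a single smooth map $h: E \times [0,1] \to E$ given locally (and in fact globally) by $h((m,v), t) = (m, tv)$; smoothness is clear because scalar multiplication on the fibres of a vector bundle is smooth. The two endpoint maps are $h_1 = \id_E$ and $h_0 = \iota \circ \pi$, where $\iota: M \hookrightarrow E$ denotes the inclusion as the zero section.

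Next, I would take $F_0 := \iota^{\ast} F$, which is a vector bundle on $M$. Invoking the homotopy invariance theorem with $h$ in place of $f$ and the vector bundle $F \to E$ in place of $E \to N$ produces a vector bundle isomorphism $h_0^{\ast} F \cong h_1^{\ast} F$ over $E$. Since $h_1^{\ast} F = F$ while, by functoriality of pullback, $h_0^{\ast} F = (\iota \circ \pi)^{\ast} F \cong \pi^{\ast}(\iota^{\ast} F) = \pi^{\ast} F_0$, we obtain the desired isomorphism $F \cong \pi^{\ast} F_0$.

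I do not anticipate a substantive obstacle here: once the homotopy invariance theorem is assumed, the corollary follows by unwinding definitions. The only conceptual point worth flagging is the role of the homotopy $g_t$, which exploits the linear structure on the fibres of $\pi$ to retract $E$ onto its zero section; this is precisely the mechanism that makes the total space of a vector bundle homotopy-equivalent to its base, and it is what forces every bundle on $E$ to descend, up to isomorphism, from a bundle on $M$.
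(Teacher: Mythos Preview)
Your proposal is correct and follows exactly the approach the paper sketches just before stating the corollary: use the scalar-multiplication homotopy $g_t$ to connect $\id_E$ with the projection onto the zero section, then invoke the homotopy invariance theorem. You have simply made explicit the choice $F_0 = \iota^{\ast}F$ and the functoriality of pullback that the paper leaves implicit.
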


In this section, we prove the graded vector bundle analogue of the above theorem in the category of graded manifolds. As throughout this paper, all graded manifolds are finite dimensional. However, we do not restrict ourselves to positive amplitude in this section. 
By definition, graded vector bundles are of finite rank: if $E=\bigoplus_{k}E^{k}$ is a graded vector bundle, then $E^{k}=0$ for all but finitely many $k$, and each $E^{k}$ is of finite rank.

We adopt the following notations: $M$ is a smooth manifold, and $I=[0,1]$ is the unit interval, often viewed as graded manifold with trivial grading. Let $\cM$ be a graded manifold. For an open subset $U\subset M$, we denote by $\cM|_{U}$, the graded manifold obtained by restricting the base manifold of $\cM$ by $U$. We define $\cM_{I}=\cM\times I$, the product of graded manifolds $\cM$ and $I$, with base $M\times I$. The natural projections $\pr_{1}:\cM_{I}\to \cM$ and $\pr_{2}:\cM_{I}\to I$ are morphisms of graded manifolds.

Let $\cE_{I}$ be a graded vector bundle over $\cM_{I}$. For each $t\in I$, let $\iota_{t}:\cM\cong \cM\times \{t\} \hookrightarrow \cM_{I}$ denote the natural inclusion. We define $\cE_{t}:=\iota_{t}^{\ast}\cE_{I}$, which is a graded vector bundle over $\cM$.

Fix a $\tangent{\cM_{I}}$-connection $\nabla$ on $\cE_{I}$.
Since $\tangent{\cM_{I}}\cong \pr_{1}^{\ast}\tangent{\cM} \oplus \pr_{2}^{\ast}\tangent{I}$, the nowhere vanishing vector field $\partial_{t}\in \XX(I)$ can be viewed as an element of $\XX(\cM_{I})$.

\begin{lemma}
For any $e_{0}\in \sections{\cM; \cE_{0}}$, there exists a unique element $e_{I}\in \sections{\cM_{I}; \cE_{I}}$ such that $e_{I}|_{\cM\times\{0\}}=e_{0}$ and $\nabla_{\partial_{t}}e_{I}=0$.
\end{lemma}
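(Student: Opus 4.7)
The plan is to interpret the equation $\nabla_{\partial_t} e_I = 0$ together with the initial condition $e_I|_{\cM \times \{0\}} = e_0$ as a first-order linear initial value problem in the $t$-direction, to solve it locally via standard ODE theory, and then to patch the local solutions into a global one by exploiting uniqueness.

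First, I would cover $M$ by open subsets $U_\alpha$ small enough that $\cM|_{U_\alpha}$ admits the standard local model $\smooth{\cM|_{U_\alpha}} \cong \smooth{U_\alpha} \tensor S(V^\vee)$, and so that $\cE_I$ is trivialisable over $\cM_I|_{U_\alpha \times I}$ with a chosen homogeneous local frame $\{\varepsilon_i\}$. Writing $e_I = \sum_i f_i \varepsilon_i$ with $f_i \in \smooth{\cM_I|_{U_\alpha \times I}}$, the equation $\nabla_{\partial_t} e_I = 0$ becomes the linear system $\partial_t f_j + \sum_i A_{ji} f_i = 0$, where the $A_{ji} \in \smooth{\cM_I|_{U_\alpha \times I}}$ are the components of the connection one-form of $\nabla$ in the direction $\partial_t$. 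Note that the graded Leibniz rule produces no sign, since $\partial_t$ has degree zero.

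Next, I would expand each $f_i$ and each $A_{ji}$ in the finite monomial basis of $S(V^\vee)$ (this expansion is finite because $V$ is finite-dimensional), with coefficients in $\smooth{U_\alpha \times I}$. Matching monomials in the graded generators produces a \emph{finite} system of linear first-order ODEs in $t$ with smooth coefficients depending on $x \in U_\alpha$. Standard ODE theory for linear systems on the compact interval $[0,1]$, with smooth parametric dependence on $x$, then yields a unique smooth solution for any initial data prescribed at $t=0$. Reassembling the coefficients gives a unique local section $e_I^\alpha \in \sections{\cM_I|_{U_\alpha \times I}; \cE_I}$ extending $e_0|_{U_\alpha}$ and satisfying $\nabla_{\partial_t} e_I^\alpha = 0$.

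Finally, uniqueness lets me glue: on any overlap $U_\alpha \cap U_\beta$, both $e_I^\alpha$ and $e_I^\beta$ solve the same linear ODE with the same initial condition, so they must agree there, yielding a well-defined global section $e_I \in \sections{\cM_I; \cE_I}$. Global uniqueness follows by applying the same argument to the difference of two putative solutions with trivial initial data. I do not expect any serious obstacle here: the only place where the graded-manifold structure really intervenes is in observing that, because $V$ is finite-dimensional and $\cE_I$ is of finite rank, the expansion of the ODE system in the monomial basis of $S(V^\vee)$ yields a finite-dimensional linear ODE with smooth base coefficients. After that reduction, the argument is entirely classical.
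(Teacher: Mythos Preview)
Your proposal is correct and follows essentially the same route as the paper: trivialise locally, expand in a monomial basis of $S(V^{\vee})$ to obtain a finite linear first-order ODE system in $t$ with smooth dependence on the base, invoke existence and uniqueness, and glue the local solutions using uniqueness. The only cosmetic difference is that the paper first produces parallel sections $s_i(t)$ extending the frame elements $w_i$ and then writes the general solution as $e_I=\sum_i g^{i}\,s_i(t)$, whereas you solve directly for $e_I$ from the given initial datum $e_0$.
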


\begin{proof}
For each $p\in M$, there exists an open neighbourhood $U$ such that $\cM|_{U}\cong U\times V$ for some graded vector space $V$. Moreover, $U$ can be chosen so that $\cE_{I}$ also trivialises, i.e., 
\[\cE_{I}|_{\cM|_{U}}\cong \cM|_{U}\times W \cong (U\times V) \times W\]
for some graded vector space $W$. Note that $W$ is the fibre of $\cE_{I}$ over $\cM_{I}$.

It suffices to prove under the assumption that $\cM=M\times V$ and $\cE_{I}=(M\times V)\times W$.
Indeed, if the lemma holds for each open neighbourhood, then by uniqueness, the local solutions $e_{I}$ defined on overlapping neighbourhoods agree on their intersections. By gluing these local solutions, we obtain the global solution $e_{I}$.

Let $\{w_{i}\}$ be a basis of $W$. Note that $\{w_{i}\}$ can be viewed as a local frame for both $\cE_{0}$ and $\cE_{I}$. Then it suffices to prove that, for each $w_{i}\in \sections{\cM; \cE_{0}}$, there exists a unique section
\[s_{i}(t)=\sum_{j} s_{i}^{j}(t)\cdot w_{j}\in \sections{\cM_{I}; \cE_{I}}\cong \smooth{\cM_{I}}\tensor W\]
such that $s_{i}^{j}(0)=\delta_{i}^{j}$ and $\nabla_{\partial_{t}}s_{i}(t)=0$, where $\delta_{i}^{j}$ is the Kronecker's delta. If $e_{0}=\sum_{i}g^{i}\cdot w_{i}$ for some $g^{i}\in \smooth{\cM}$, then the unique solution is given by 
\[e_{I}=\sum_{i}g^{i}\cdot s_{i}(t).\]

Let $\nabla_{\partial_{t}}w_{k}=A_{k}^{j}(t)\cdot w_{j}$ where $A_{k}^{j}(t)\in \smooth{\cM_{I}}$. Note that, by the assumption, we have an identification
\[\smooth{\cM_{I}}\cong  \smooth{M\times I}\tensor SV^{\vee}.\]
Given a basis $\{\xi\}$ of $SV^{\vee}$, we have the following expressions as a finite sum: 
\[
s_{i}^{j}(t)=\sum_{\xi} s_{i}^{j}(\xi;t)\cdot \xi, \qquad 
A_{k}^{j}(t)=\sum_{\xi} A_{k}^{j}(\xi;t)\cdot \xi
\]
where $s_{i}^{j}(\xi;t), A_{k}^{j}(\xi;t)\in \smooth{M\times I}$. Therefore, the following equation induces an initial value problem in ODE for each $i,j,\xi$:
\begin{equation} \label{eq:AppODE}
\nabla_{\partial_{t}}s_{i}(t)= \sum_{j} \sum_{\xi}\Big( \frac{\partial}{\partial t}s_{i}^{j}(\xi;t) + \sum_{k}A_{k}^{j}(\xi;t)\Big)\xi\cdot w_{j} = 0
\end{equation}
with the initial value $s_{i}^{j}(\xi;0)=\delta_{i}^{j}$. By the existence and uniqueness of initial value problems in ODE, one obtains a unique solution $s_{i}(t)$ as desired. This completes the proof.
\end{proof}

\begin{corollary}\label{cor:App12}
Fix $t\in I$. The assignment $e_{0}\mapsto e_{I}|_{\cM\times\{t\}}$ induces an isomorphism of $\smooth{\cM}$-modules
\[P_{t}:\sections{\cM; \cE_{0}} \cong \sections{\cM; \cE_{t}}.\]
In particular, $\cE_{0}\cong \cE_{1}$ as graded vector bundles over $\cM$.
\end{corollary}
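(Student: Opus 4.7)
The plan is to promote the parallel transport of the preceding lemma into a two-sided $\smooth{\cM}$-linear isomorphism. I would set $P_t(e_0):=e_I|_{\cM\times\{t\}}$, with $e_I$ the unique parallel section provided by the lemma. For the inverse, I would observe that nothing in the ODE analysis leading to~\eqref{eq:AppODE} is special to the initial time $t=0$: applying it with any initial time $s\in I$ yields transport maps $P_{s,t}\colon \sections{\cM;\cE_s}\to \sections{\cM;\cE_t}$ for all $s,t\in I$. By the uniqueness clause, the same parallel section that witnesses $P_{s,t}(e)$ is also the unique parallel section with value $P_{s,t}(e)$ at time $t$, forcing $P_{t,s}\circ P_{s,t}=\id$, and symmetrically for the other composition. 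Hence $P_t=P_{0,t}$ is bijective with inverse $P_{t,0}$.

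For $\smooth{\cM}$-linearity, I would use that $\pr_1^{\ast}f$ is annihilated by $\partial_t$ for every $f\in \smooth{\cM}$. Then $(\pr_1^{\ast}f)\cdot e_I$ is $\nabla_{\partial_t}$-flat with initial value $f\cdot e_0$, and by uniqueness coincides with the parallel transport of $f\cdot e_0$; restricting at time $t$ yields $P_t(f\cdot e_0)=f\cdot P_t(e_0)$. Homogeneity with respect to the internal $\ZZ$-grading is automatic since $\nabla$ has degree $0$, so that parallel transport preserves the decomposition of $\cE_I$ into homogeneous components.

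Finally, specialising to $t=1$ gives a graded $\smooth{\cM}$-module isomorphism $\sections{\cM;\cE_0}\cong \sections{\cM;\cE_1}$. Because a graded vector bundle of finite rank over $\cM$ is recovered from its locally free sheaf of sections over the structure sheaf (cf.~Remark~\ref{rem:Sheaf}), this module isomorphism upgrades to an isomorphism $\cE_0\cong \cE_1$ of graded vector bundles. The only delicate point I foresee, namely the legitimacy of starting the ODE at an arbitrary time $s$ in order to construct the inverse of $P_t$, is immediate: in the proof of the preceding lemma the time $t=0$ entered only through the specification of the initial datum, and the existence--uniqueness conclusion for~\eqref{eq:AppODE} remains valid at any $s\in I$.
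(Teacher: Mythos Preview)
Your argument is correct and is exactly the standard parallel-transport reasoning the paper has in mind; the paper states this corollary without proof, and your proposal supplies precisely the details (inverse via transport from an arbitrary initial time, $\smooth{\cM}$-linearity from $\partial_t(\pr_1^{\ast}f)=0$, and passage from module to bundle isomorphism) that make the deduction from the preceding lemma rigorous.
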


\begin{theorem}\label{thm:AppMain}
Let $f:\cM\times [0,1] \to \cN$ be a morphism of graded manifolds. For any graded vector bundle $\cF\to \cN$, there is an isomorphism of graded vector bundles 
\[f_{0}^{\ast}\cF \cong f_{1}^{\ast}\cF\]
 over $\cM\cong \cM\times \{0\} \cong \cM\times \{1\}$ where $f_{t}:=f|_{\cM\times \{t\}}$.
\end{theorem}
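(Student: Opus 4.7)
The plan is to reduce Theorem~\ref{thm:AppMain} directly to Corollary~\ref{cor:App12} by pulling back $\cF$ along $f$. Specifically, I would set $\cE_I := f^{\ast}\cF$, which is a graded vector bundle over $\cM_I = \cM\times I$. For each $t\in I$, the natural inclusion $\iota_t:\cM\hookrightarrow \cM_I$ satisfies $f\circ \iota_t = f_t$, so by functoriality of pullback,
\[
\cE_t \;=\; \iota_t^{\ast}\cE_I \;=\; \iota_t^{\ast}f^{\ast}\cF \;\cong\; (f\circ \iota_t)^{\ast}\cF \;=\; f_t^{\ast}\cF,
\]
as graded vector bundles over $\cM$.

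At this point Corollary~\ref{cor:App12} gives $\cE_0\cong \cE_1$ as graded vector bundles over $\cM$, and through the identifications above this is precisely the desired isomorphism $f_0^{\ast}\cF \cong f_1^{\ast}\cF$. The underlying mechanism is parallel transport: one equips $\cE_I = f^{\ast}\cF$ with a $\tangent{\cM_I}$-connection $\nabla$ (which exists by a standard partition-of-unity argument on the base $M\times I$, extended trivially along the graded fibres), and the preceding lemma produces a unique $\nabla_{\partial_t}$-horizontal extension of any section of $\cE_0$, yielding the $\smooth{\cM}$-module isomorphism $P_t$ of Corollary~\ref{cor:App12}.

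Since the hard analytic input (solvability of the ODE~\eqref{eq:AppODE} in each polynomial degree over $SV^{\vee}$) has already been carried out in the lemma, there is no remaining obstacle: the theorem follows by one line of pullback functoriality plus an invocation of Corollary~\ref{cor:App12}. The only point worth remarking is that the existence of a $\tangent{\cM_I}$-connection on $f^{\ast}\cF$ does not require anything beyond the graded-manifold setting used throughout the appendix, since graded vector bundles over a graded manifold always admit connections (locally trivial by the local model $\cM|_U \cong U\times V$, globally by gluing via a smooth partition of unity on the base).
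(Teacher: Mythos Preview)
Your proposal is correct and is exactly the paper's approach: the paper's proof is the single line ``Apply Corollary~\ref{cor:App12} to $\cE:=f^{\ast}\cF$,'' and your argument is just this together with the (implicit) functoriality identification $\iota_t^{\ast}f^{\ast}\cF\cong f_t^{\ast}\cF$.
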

\begin{proof}
Apply Corollary~\ref{cor:App12} to $\cE:=f^{\ast}\cF$.
\end{proof}

For graded manifolds of positive amplitude---or more generally, for graded manifolds of split type (i.e., $\cM=L$ for some graded vector bundle $L\to M$)---the following proposition holds.

\begin{proposition}\label{prop:AppGradedStr}
Let $\cM$ be a graded manifold of positive amplitude.
For any graded vector bundle $\cE \to \cM$, there exists a graded vector bundle $E \to M$ such that 
\[\sections{\cM;\cE} \cong \smooth{\cM}\tensor_{\smooth{M}}\sections{M;E}\]
as a graded $\smooth{\cM}$-modules.
\end{proposition}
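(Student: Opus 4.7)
The strategy is to exhibit $\cE$ as pulled back from an ordinary graded vector bundle over $M$, using the homotopy invariance just established in Theorem~\ref{thm:AppMain}. Since $\cM$ is of positive amplitude, the (ungraded) Batchelor-type theorem identifies $\cM$ with the total space of a graded vector bundle $L=\bigoplus_{k\geq 1}L^{k}\to M$ concentrated in strictly positive degrees, so that $\smooth{\cM}=\sections{SL^{\vee}}$. Let $\pi:\cM\to M$ denote the bundle projection, corresponding on functions to the inclusion $\smooth{M}\hookrightarrow\sections{SL^{\vee}}$, and let $\iota:M\to\cM$ denote the zero section, corresponding to the augmentation $\sections{SL^{\vee}}\to\smooth{M}$ that kills $\sections{S^{\geq 1}L^{\vee}}$.

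The core construction is fibrewise scalar multiplication, assembled into a morphism of graded manifolds $\mu:\cM\times I\to\cM$ whose algebra-level map is
\[
\mu^{\ast}:\sections{SL^{\vee}}\to \sections{SL^{\vee}}\tensor_{\RR}\smooth{I}, \qquad \xi \mapsto t^{n}\,\xi \quad \text{for } \xi\in\sections{S^{n}L^{\vee}},
\]
extended $\smooth{M}$-linearly. Because $L$ is concentrated in strictly positive degrees, the target of this formula is the full (non-completed) symmetric algebra tensored with $\smooth{I}$, and $\mu^{\ast}$ is a well-defined degree-$0$ homomorphism of graded $\smooth{M}$-algebras. Evaluating at the endpoints of $I$ gives $\mu_{1}=\id_{\cM}$ and $\mu_{0}=\iota\circ\pi$.

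Applying Theorem~\ref{thm:AppMain} to the graded vector bundle $\cE$ on $\cM$ yields an isomorphism $\mu_{0}^{\ast}\cE\cong\mu_{1}^{\ast}\cE=\cE$. Setting $E:=\iota^{\ast}\cE$, which is a bona fide graded vector bundle over the ordinary smooth manifold $M$, we obtain $\mu_{0}^{\ast}\cE=\pi^{\ast}\iota^{\ast}\cE=\pi^{\ast}E$. Passing to sections and using that pullback along a morphism of graded manifolds corresponds to extension of scalars,
\[
\sections{\cM;\cE}\cong \sections{\cM;\pi^{\ast}E}\cong \smooth{\cM}\tensor_{\smooth{M}}\sections{M;E}
\]
as graded $\smooth{\cM}$-modules, which is the desired statement.

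The only genuine technical point is verifying that scalar multiplication extends to a morphism of graded manifolds globally, but the Batchelor splitting reduces this to the explicit algebraic formula above, which is manifestly well-defined. Everything else is a formal consequence of the already-proved homotopy invariance of pullback bundles and the standard identification of sections of a pullback with an extension of scalars.
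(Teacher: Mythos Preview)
Your proposal is correct and follows essentially the same route as the paper: both use the Batchelor splitting $\cM\cong L$, construct the fibrewise scaling homotopy between $\id_{\cM}$ and $\iota\circ\pi$, apply Theorem~\ref{thm:AppMain} to conclude $\cE\cong\pi^{\ast}(\iota^{\ast}\cE)$, and then pass to sections. The only difference is presentational---you spell out the scaling map on the function algebra via $\xi\mapsto t^{n}\xi$ on $\sections{S^{n}L^{\vee}}$, whereas the paper writes it geometrically as $f_{t}(p,v)=(p,tv)$ and sets $E=\cE|_{M}$ rather than $E=\iota^{\ast}\cE$.
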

\begin{proof}
By Proposition~\ref{prop:CatBundle}, we may identify $\cM=L$, where $\pi:L\to M$ is a graded vector bundle. Denote a point of $L$ by a pair $(p,v)$ where $p\in M$ and $v$ is a vector in the fibre over $p$.
For $t\in I$, define $f_{t}:L\to L$ by $f_{t}(p,v)=(p,tv)$. Then, $f_{1}=\id_{\cM}$ is the identity map on $\cM$ and $f_{0}=\pr_{M}$ is the projection onto $M\subset \cM$. 

Let $E=\cE|_{M}$ be the restriction of $\cE$ to the base manifold $M\subset \cM$. By Theorem~\ref{thm:AppMain}, we have a bundle isomorphism $\cE\cong \pi^{\ast}E$. In terms of sections, we obtain an isomorphism of graded $\smooth{\cM}$-modules
\[\sections{\cM;\cE} \cong \smooth{\cM}\tensor_{\smooth{M}}\sections{M;E}.\]
This completes the proof.
\end{proof}

%\bibliographystyle{amsplain}
%\bibliography{references}
\printbibliography
\end{document}